\theoremstyle{plain}
\newtheorem{thm}{Theorem}[section]
\newtheorem{coro}[thm]{Corollary}
\newtheorem{prop}[thm]{Proposition}
\newtheorem{lemm}[thm]{Lemma}
\theoremstyle{definition}
\newtheorem{deff}[thm]{Definition}
\newtheorem{examp}[thm]{Example}
\theoremstyle{remark}
\newtheorem{rema}[thm]{Remark}
\newtheorem*{conv}{Convention}
\renewcommand\thefootnote{\alph{footnote}}
\newcommand\legendre[2]{\genfrac{(}{)}{}{}{#1}{#2}}
\title{Finite quadratic modules and lattices}
\author{Xiao-Jie Zhu}
\begin{document}
\maketitle
\let\oldthefootnote=\thefootnote
\let\thefootnote\relax\footnotetext{\textsl{2010 Mathematics Subject Classification.} 11E12 (primary), 16D70 (secondary).}
\footnotetext{\textsl{Key words and phrases.} Finite quadratic modules, lattices, discriminant modules.}
\footnotetext{This work is supported by the Fundamental Research Funds for the Central Universities of China (Grant No. 22120180508).}
\footnotetext{\textsl{Address.} School of Mathematical Sciences, Tongji University, 1239 Siping Road, Shanghai, P.R. China}
\footnotetext{\textsl{E-mail.} zhuxiaojiemath@outlook.com}
\let\thefootnote=\oldthefootnote

\begin{abstract}
We give a new proof of the fact that any finite quadratic module can be decomposed into indecomposable ones. For any indecomposable finite quadratic module, we construct a lattice, and a positive definite lattice, both of which are of the least rank, whose discriminant module is the given one. The resulting lattices are given by their Gram matrices explicitly.
\end{abstract}

\tableofcontents

\section{Introduction}
\label{sec:Introduction}
A finite quadratic module is a finite abelian group equipped with a nondegenerate quadratic map assuming values in the quotient $\mathbb{Q}/\mathbb{Z}$, and a lattice is a free $\mathbb{Z}$-module of finite rank equipped with a nondegenerate symmetric bilinear form taking values in the reals. The results of this paper concern these two concepts. Our first result is a new proof of the structure theorem of finite quadratic modules, which says any such module can be decomposed as an orthogonal direct sum of three kinds of indecomposable modules. Our proof involves only some theory of finite abelian groups, not using methods of $p$-adic fields and $p$-adic lattices. To our best knowledge, the first proof of this theorem stated in such form appears in \cite{Stromberg2013}, where the author used classical results from the theory of quadratic forms over $p$-adic numbers. We must mention that, Wall has actually proved this in \cite[\S 6]{Wall1963}, but he did not state it explicitly. Our second result is that, we find an even lattice of the least rank whose discriminant module is any given indecomposable finite quadratic module. These lattices are presented by their Gram matrices. In this direction, Wall is the first person to prove any finite quadratic module is the discriminant module of some  even lattice. See \cite{Wall1963}. Comparing to Wall's result, our result has the advantage of giving the lattice explicitly for any indecomposable module. Our third result is analogous to the second result, which gives, for any indecomposable module, a positive definite even lattice instead of a general even lattice, still of the least rank. The motivation of such constructions is the need of providing concrete examples of Jacobi forms of lattice index.

We now briefly review relevant results not mentioned above in the literature. The importance of the concept and theory of finite quadratic modules, to our best knowledge, lies in two facts. One fact is that the concept of finite quadratic modules is a useful tool to study non-unimodular integral even lattices. This was shown by Nikulin in \cite{Nikulin1980}. More specifically, Nikulin gives a necessary and sufficient condition for the existence of even lattices with given inertia and discriminant module, which generalizes a theorem of John Milnor dealing with unimodular lattices. See \cite[Theorem 1.1.1]{Nikulin1980} and \cite[Theorem 1.10.1]{Nikulin1980}. The other fact is that, to any finite quadratic module, one can associate a Weil representation of $\mathrm{Mp}_2(\mathbb{Z})$, the metaplectic cover of the modular group. (For background materials on Weil representations, see \cite{Gelbart1976}, or the original paper of Weil \cite{Weil1964}.) The Weil representations of this kind are very interesting objects in Representation Theory, since they include all representations of the modular group (or its double cover) whose kernels are congruence subgroups. See \cite{Skoruppa2020} for a thorough discussion and a proof. Also see \cite{Skoruppa2008}. They are also interesting in the area of modular forms, since any modular form of level $N \in \mathbb{Z}_{\ge 1}$ with the multiplier system given by a Dirichlet character can be lifted to a modular form for a Weil representation associated to some finite quadratic module. One can find an explicit formula of this lift in \cite[Theorem 5.7]{Scheithauer2009}. Also, this kind of Weil representations have important applications in the theory of automorphic forms on orthogonal groups. The key concept of this application is that of a Bocherds' singular theta lift, which maps a modular form for some Weil representation to an automorphic form on some orthogonal group. See \cite{Borcherds1998} for this theory. This concludes the discussion of the importance of the concept and theory of finite quadratic modules. Besides the above discussion, we suggest another good refenrence on finite quadratic modules and relevant topics, the book \cite{CS2017} (more exactly, \S 14.5).

We list the notational convention. As usual, the symbol $\mathbb{Z}$, $\mathbb{Q}$, $\mathbb{R}$, and $\mathbb{C}$ denote the ring of integers, of rationals, of reals, and of complex numbers respectively. We use $\mathbb{Z}_{\ge n}$ to denote the set of integers greater than or equal to $n$. For two integers $a$ and $b$, the notation $(a,b)$ denotes the greatest common divisor of these two numbers, and $\legendre{a}{b}$ denotes the Kronecker's extension of the Legendre symbol. For a definition of Kronecker symbol $\legendre{a}{b}$, see \cite[\S 3.4]{CS2017}. For a ring $R$, we use $R^\times$ to denote its group of units, and use $\mathrm{GL}_n(R)$ to denote the group of invertible $n \times n$ matrices with entries in $R$, and $\mathrm{SL}_n(R)$ the subgroup of matrices whose determinant equals 1 in $\mathrm{GL}_n(R)$. For an $n \times n$ matrix $A$, the symbol $A^{\mathrm{T}}$ denotes its transpose, and $A^{-1}$ its inverse. The indentity matrix of size $n$ would be denoted by $\mathrm{I}_n$. The symbol $\mathop{\mathrm{diag}}\left(a_1,a_2, \dots, a_n\right)$ denotes the $n \times n$ diagonal matrix whose entries on the main diagonal are $a_1$, $a_2$, \dots, and $a_n$. The symbol $\delta_{ij}$ denotes Kronecker's $\delta$, i.e., the function assumes the value $1$ if $i=j$ and the value $0$ if $i \neq j$. For a set $S$, by $\vert S \vert$ we mean its cardinality. For a finite abelian group $G$, by $\widehat{G}$ we mean its dual group, i.e., the group of all complex linear characters on $G$ with the usual multiplication of functions. If $\sigma$ is a homomorphism of two algebraic structures with an underlying abelian group structure (written additively), then $\ker(\sigma)$ denotes the inverse image of the zero element. Other notations can be found where we give definitions.

The structure of this paper is as follows. In Section \ref{sec:Billinear map modules and quadratic map modules}, we review some basic facts concerning modules, bilinear maps, quadratic maps, orthogonal sums, matrices of bilinear forms, how to take quotients, and the concept of nondegeneracy. This is necessary for our purpose: we shall use these basic facts in our proof of the structure theorem of finite quadratic modules and also in our construction of a lattice whose discriminant module is any given finite quadratic module. In Section \ref{sec:Lattices and finite quadratic modules}, after a review of basic facts on finite quadratic modules and lattices, a proof of the structure theorem of finite quadratic modules is given. The proof is separated into five parts: Lemma \ref{lemm:DabcIsoToBorC}, Lemma \ref{lema:PriDecomOfFQM}, Lemma \ref{lemm:pGroupDecom}, Lemma \ref{lemm:2GroupDecom}, and Theorem \ref{thm:JordanDecompositionOfFQM}. We make some comments on this proof. Lemma \ref{lema:PriDecomOfFQM} deals with the primary decomposition of a finite quadratic module (which is unique). The proof of this lemma is according to \cite[Proposition 1.6]{Boylan2015}, where Boylan considers finite quadratic modules over number fields. Lemma \ref{lemm:pGroupDecom} and Lemma \ref{lemm:2GroupDecom} deal with decompositions of $p$-primary modules. The former assumes $p > 2$, and the latter $p = 2$. The proofs of these two lemmas are original. There is a technically difficult point in the proof of Lemma \ref{lemm:2GroupDecom}, for which we state a new lemma,  Lemma \ref{lemm:DabcIsoToBorC}, to solve this difficulty. Finally, we combine these lemmas, to obtain Theorem \ref{thm:JordanDecompositionOfFQM}. In Section \ref{sec:Discriminant modules of even lattices}, we turn to our second result---constructions of nondegenerate even lattices with given indecomposable finite quadratic modules. The resulting lattices are given by their Gram matrices in Theorem \ref{thm:LatticeOfLeastRankOfFQMApr}, Theorem \ref{thm:LatticeOfLeastRankOfFQMA2r}, and Theorem \ref{thm:LatticeOfLeastRankOfFQMBrCr}. Section \ref{sec:Discriminant modules of positive definite even lattices} has a similar structure, comparing to Section \ref{sec:Discriminant modules of even lattices}, in which we construct positive definite even lattices with given indecomposable finite quadratic modules as their discriminant modules.  The resulting lattices are given by their Gram matrices in Theorem \ref{thm:theLeastRankPosDefLatticeForAprEven}, Theorem \ref{thm:theLeastRankPosDefLatticeForAprOdd}, Theorem \ref{thm:theLeastRankPosDefLatticeForA2rEven}, Theorem \ref{thm:theLeastRankPosDefLatticeForA2rOdd}, and Theorem \ref{thm:theLeastRankPosDefLatticeForB2rC2r}.



\section{Bilinear map modules and quadratic map modules}
\label{sec:Billinear map modules and quadratic map modules}
\begin{conv}
In this section, we fix a commutative ring $R \neq \{0\}$ with an identity $1 = 1_{R}$. 
\end{conv}

The constructions presented here are common to both finite quadratic modules and $\mathbb{Z}$-lattices.

Let $M$, $M_1$, $M_2$ and $N$ be $R$-modules. We use $\mathrm{Hom}_{R}(M; N)$ to denote the set of linear maps from $M$ to $N$, and
$\mathrm{Hom}_{R}(M_1, M_2; N)$ the set of bilinear maps from $M_1 \times M_2$ to $N$. They both become $R$-modules under ordinary addition of functions and scalar multiplication of functions by elements of $R$. One can verify straightforwardly that the map
\begin{align}
\label{eq:IsoInducedByBilinearMapLeft}
\mathrm{Hom}_{R}(M_1, M_2; N) &\longrightarrow \mathrm{Hom}_{R}(M_1; \mathrm{Hom}_{R}(M_2; N)) \notag \\
B                &\longmapsto (m_1 \mapsto (m_2 \mapsto B(m_1, m_2))
\end{align}
is an isomorphism of $R$-modules. Similarly the map
\begin{align}
\label{eq:IsoInducedByBilinearMapRight}
\mathrm{Hom}_{R}(M_1, M_2; N) &\longrightarrow \mathrm{Hom}_{R}(M_2; \mathrm{Hom}_{R}(M_1; N)) \notag \\
B                &\longmapsto (m_2 \mapsto (m_1 \mapsto B(m_1, m_2))
\end{align}
is also an isomorphism. Some authors say that $B \in \mathrm{Hom}_{R}(M_1, M_2; N)$ is nondegenerate (or nonsingular) if both the values of the map \eqref{eq:IsoInducedByBilinearMapLeft} and \eqref{eq:IsoInducedByBilinearMapRight} at $B$ are $R$-linear isomorphisms from $M_1$ to $\mathrm{Hom}_R(M_2; N)$ and from $M_2$ to $\mathrm{Hom}_R(M_1; N)$ respectively. In some situations the term ``nondegenerate'' refers to a different meaning; it means that both the values of the map \eqref{eq:IsoInducedByBilinearMapLeft} and \eqref{eq:IsoInducedByBilinearMapRight} at $B$ are monomorphisms. To distinguish the two meanings we shall call $B$ nondegenerate (strongly nondegenerate respectively) if the values at $B$ are monomorphims (isomorphisms respectively) in this paper.

We are particularly interested in the case $M_1 = M_2 = M$. In this case we write $\underline{M}=(M, B)$ and call it a bilinear map module. If $N = R$ then $B$ is called a bilinear form and $\underline{M}=(M, B)$ a bilinear form module. We say that $\underline{M}$ is nondegenerate (strongly nondegenerate respectively) if B is nondegenerate (strongly nondegenerate respectively). \label{deff:StrongNondeg}

Among all the bilinear maps there are three important classes: the symmetric ones, the skew-symmetric (or antisymmetric) ones, and the alternating (or symplectic) ones. Assume that $M_1 = M_2 = M$. We say $B$ is symmetric (or $\underline{M}$ is a symmetric bilinear map module) if $B(x, y) = B(y, x)$ for any $x, y \in M$. We say that $B$ is skew-symmetric if $B(x, y) = -B(y, x)$ for any $x, y \in M$ and that $B$ is alternating if $B(x, x) = 0$ for any $x \in M$. We concern mainly the case that $B$ is symmetric.

There is an associated orthogonality relation on any bilinear map module $\underline{M}=(M, B)$. For $m_1, m_2 \in M$, we say that $m_1$ is orthogonal to $m_2$ (or $m_1 \perp m_2$ symbolically) if $B(m_1, m_2)=0$. One can verify that if $B$ is symmetric, skew-symmetric, or alternating, then the orthogonality relation is a symmetric relation, that is, $m_1 \perp m_2$ implies $m_2 \perp m_1$. When $R$ is an integral domain and $N=R$ one can also prove the converse: if orthogonality is a symmetric relation, then $B$ is symmetric or alternating, but this requires some effort. A proof can be found in \cite[p.~266]{Roman2011} for $R$ being a field. However, that proof is also valid for any integral domain $R$.

From now on, we always assume that $B$ is symmetric, skew-symmetric, or alternating, so that orthogonality is a symmetric relation. For a subset $S$ of $M$, denote by $S^{\perp}$ the set of all elements in $M$ which is orthogonal to elements in $S$. The subset $S^{\perp}$ must be a submodule. Here is a useful and basic property concerning orthogonality, which we omit the direct proof.

\begin{lemm}
\label{lemm:OrthoBasicProp}
Let $M$, $N$ be $R$-modules and $M_0$ a submodule of $M$. Let $B \in \mathrm{Hom}_{R}(M, M; N)$ be symmetric, skew-symmetric, or alternating. Denote by $\phi \colon M \rightarrow \mathrm{Hom}_R(M_0; N)$ the composite of the value of the map \eqref{eq:IsoInducedByBilinearMapLeft} at $B$ and the canonical map $\mathrm{Hom}_R(M; N) \rightarrow \mathrm{Hom}_R(M_0; N)$ that maps $f$ to the restriction of $f$ to $M_0$. Then $M_0^{\perp} = \ker(\phi)$. Consequently, if $\phi$ is surjection, then we have $M / M_0^{\perp} \cong \mathrm{Hom}_R(M_0; N)$.
\end{lemm}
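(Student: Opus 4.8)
The plan is to establish the equality $M_0^{\perp} = \ker(\phi)$ by simply unwinding the definitions of both sides, and then to read off the quotient isomorphism from the first isomorphism theorem for $R$-modules. First I would make $\phi$ fully explicit. By the defining formula of the isomorphism \eqref{eq:IsoInducedByBilinearMapLeft} (applied with $M_1 = M_2 = M$), its value at $B$ is the map sending $m \in M$ to the functional $m' \mapsto B(m, m')$ on $M$; composing with the restriction homomorphism $\mathrm{Hom}_R(M; N) \rightarrow \mathrm{Hom}_R(M_0; N)$ then shows that $\phi(m) \in \mathrm{Hom}_R(M_0; N)$ is precisely the functional $m_0 \mapsto B(m, m_0)$.

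With this description in hand, an element $m \in M$ lies in $\ker(\phi)$ if and only if $B(m, m_0) = 0$ for every $m_0 \in M_0$, i.e. if and only if $m \perp m_0$ for every $m_0 \in M_0$. Because $B$ is symmetric, skew-symmetric, or alternating, the orthogonality relation on $\underline{M}$ is symmetric, so the latter condition is exactly the statement that $m \in M_0^{\perp}$. This proves $\ker(\phi) = M_0^{\perp}$. The only place the hypothesis on $B$ intervenes is in guaranteeing that $M_0^{\perp}$, defined as the set of elements orthogonal to the elements of $M_0$, is insensitive to the order of the arguments of $B$; the characterization of $\ker(\phi)$ in the form ``$B(m, m_0) = 0$ for all $m_0 \in M_0$'' holds unconditionally.

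For the final assertion, assume $\phi$ is surjective. Then $\phi \colon M \rightarrow \mathrm{Hom}_R(M_0; N)$ is an epimorphism of $R$-modules with kernel $M_0^{\perp}$, and the first isomorphism theorem immediately yields an $R$-linear isomorphism $M/M_0^{\perp} \cong \mathrm{Hom}_R(M_0; N)$. I do not expect any genuine obstacle in this argument: every step is either the direct translation of a definition or an invocation of a standard module-theoretic fact, and the single point requiring a modicum of care is keeping track of the direction convention for $\perp$ — which is precisely what the symmetry of the orthogonality relation settles.
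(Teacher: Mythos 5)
Your proof is correct and is precisely the direct argument the paper has in mind (the paper omits the proof of this lemma as a routine verification): unwinding \eqref{eq:IsoInducedByBilinearMapLeft} to see $\phi(m) = (m_0 \mapsto B(m,m_0))$, identifying $\ker(\phi)$ with $M_0^{\perp}$, and applying the first isomorphism theorem. Your remark that the symmetry/skew-symmetry/alternating hypothesis only serves to make the orthogonality relation order-insensitive, while the kernel description holds unconditionally, is an accurate and welcome clarification.
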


Note that Lemma \ref{lemm:OrthoBasicProp} remains true if we substitute the map \eqref{eq:IsoInducedByBilinearMapRight} for \eqref{eq:IsoInducedByBilinearMapLeft}, since $B$ is symmetric, skew-symmetric, or alternating. Also note that we need $\phi$ to be surjection for the isomorphism relation holding. Hence, we give a useful condition for $\phi$ being surjection as follows:

\begin{lemm}
\label{lemm:PhiBeingSurjection}
Use notations and conditions of Lemma \ref{lemm:OrthoBasicProp}. Let $M_1$ be a submodule of $M$ containing $M_0$. If the restriction of $B$ to $M_1 \times M_1$ is strongly nondegenerate, and each map in $\mathrm{Hom}_R(M_0; N)$ extends to a map in $\mathrm{Hom}_R(M_1; N)$, then $\phi$ is a surjection.
\end{lemm}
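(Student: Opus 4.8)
The plan is to unwind the definition of $\phi$ and then chase an arbitrary element of $\mathrm{Hom}_R(M_0; N)$ backwards through the two hypotheses. Recall that, by construction, $\phi$ sends $m \in M$ to the linear map $M_0 \to N$ given by $m_0 \mapsto B(m, m_0)$; so surjectivity of $\phi$ means precisely that every $f \in \mathrm{Hom}_R(M_0; N)$ is of the form $m_0 \mapsto B(m, m_0)$ for some $m \in M$.

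First I would fix $f \in \mathrm{Hom}_R(M_0; N)$ and invoke the second hypothesis to pick an extension $\widetilde{f} \in \mathrm{Hom}_R(M_1; N)$ with $\widetilde{f}|_{M_0} = f$. Next, write $B_1$ for the restriction of $B$ to $M_1 \times M_1$. Strong nondegeneracy of $B_1$ (Definition \ref{deff:StrongNondeg}) says that the value of the map \eqref{eq:IsoInducedByBilinearMapLeft} at $B_1$, namely the homomorphism $M_1 \to \mathrm{Hom}_R(M_1; N)$ sending $m_1 \mapsto \left( m_1' \mapsto B(m_1, m_1') \right)$, is an isomorphism; in particular it is surjective, so there exists $m_1 \in M_1$ with $B(m_1, m_1') = \widetilde{f}(m_1')$ for all $m_1' \in M_1$.

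Finally I would verify that this $m_1$, viewed as an element of $M$ via the inclusion $M_1 \hookrightarrow M$, satisfies $\phi(m_1) = f$: for every $m_0 \in M_0 \subseteq M_1$ we get $\phi(m_1)(m_0) = B(m_1, m_0) = \widetilde{f}(m_0) = f(m_0)$. As $f$ was arbitrary, $\phi$ is a surjection. There is essentially no obstacle here beyond careful bookkeeping; the only point worth stating is the compatibility of $\phi$ with restriction to the submodule $M_1$, i.e. that $\phi \circ (M_1 \hookrightarrow M)$ equals the composite of the restriction map $\mathrm{Hom}_R(M_1; N) \to \mathrm{Hom}_R(M_0; N)$ with the value of \eqref{eq:IsoInducedByBilinearMapLeft} at $B_1$ — immediate from the definitions, since $B_1$ is literally $B$ restricted. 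Note also that only the surjectivity half of the strong nondegeneracy of $B_1$ is used; the injectivity half is not needed for this lemma.
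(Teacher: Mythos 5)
Your proof is correct, and it is exactly the direct argument the paper has in mind (the paper omits the proof as "direct"): extend $f$ to $M_1$, use surjectivity from the strong nondegeneracy of $B$ on $M_1 \times M_1$ to represent the extension by some $m_1 \in M_1$, and restrict back to $M_0$ to get $\phi(m_1) = f$. Your closing observation that only the surjectivity half of strong nondegeneracy is needed is also accurate.
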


The proof is also direct, so we omit it. To use this lemma in practice, the key point is to choose a suitable $M_1$ satisfying the two conditions. One way of doing this in the case $N=R$ is to use Lemma \ref{lemm:GramMatrixNeqR}.

The following lemma is well-known and useful, so we give a detailed proof. (See, for instance, Milnor and Husemoller \cite{MH1973} and Wall \cite{Wall1963}.)

\begin{lemm}
\label{lemm:BasicLemmaOfStrongliNondeg}
Use notations and conditions of Lemma \ref{lemm:OrthoBasicProp}. Suppose that the restriction of $B$ to $M_0 \times M_0$ is strongly nondegenerate. Then we have
\begin{enumerate}
\item $M=M_0 \oplus M_0^{\perp}$.
\item If $B$ is also strongly nondegenerate, then so is the restriction of $B$ to $M_0^{\perp} \times M_0^{\perp}$.
\end{enumerate}
\end{lemm}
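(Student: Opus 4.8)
The plan is to prove part (1) first, and then derive part (2) from it with a short argument. For part (1), the key observation is that the hypothesis "the restriction of $B$ to $M_0 \times M_0$ is strongly nondegenerate" is exactly what makes the map $\phi \colon M \to \mathrm{Hom}_R(M_0; N)$ of Lemma \ref{lemm:OrthoBasicProp} behave well. Indeed, I would apply Lemma \ref{lemm:PhiBeingSurjection} with $M_1 = M_0$: since $B\restriction_{M_0 \times M_0}$ is strongly nondegenerate and every map in $\mathrm{Hom}_R(M_0; N)$ trivially extends (restricts to) a map in $\mathrm{Hom}_R(M_0; N)$, we get that $\phi$ is surjective. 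Now strong nondegeneracy of $B\restriction_{M_0 \times M_0}$ says precisely that the value of \eqref{eq:IsoInducedByBilinearMapLeft} at $B\restriction_{M_0\times M_0}$ is an isomorphism $M_0 \to \mathrm{Hom}_R(M_0; N)$. Composing a suitable inverse with $\phi$ produces an $R$-linear retraction $r\colon M \to M_0$ onto $M_0$ whose kernel is exactly $M_0^\perp$ by Lemma \ref{lemm:OrthoBasicProp}. Concretely, given $m \in M$, the functional $\phi(m) \in \mathrm{Hom}_R(M_0;N)$ equals $\phi(m_0)$ for a unique $m_0 \in M_0$, and then $m - m_0 \in \ker\phi = M_0^\perp$; this shows $M = M_0 + M_0^\perp$. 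For the sum to be direct, I must check $M_0 \cap M_0^\perp = \{0\}$: if $x \in M_0$ lies in $M_0^\perp$, then $B(x, y) = 0$ for all $y \in M_0$, so $x$ maps to the zero functional under the isomorphism $M_0 \cong \mathrm{Hom}_R(M_0; N)$, forcing $x = 0$. Hence $M = M_0 \oplus M_0^\perp$.

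For part (2), assume in addition that $B$ is strongly nondegenerate on all of $M$; I want the restriction of $B$ to $M_0^\perp \times M_0^\perp$ to be strongly nondegenerate as well. By the characterization via \eqref{eq:IsoInducedByBilinearMapLeft} (and its symmetric counterpart, which agrees here since $B$ is symmetric/skew/alternating), it suffices to show the natural map $M_0^\perp \to \mathrm{Hom}_R(M_0^\perp; N)$ induced by $B\restriction_{M_0^\perp \times M_0^\perp}$ is an isomorphism. Using the decomposition $M = M_0 \oplus M_0^\perp$ from part (1), any $f \in \mathrm{Hom}_R(M_0^\perp; N)$ extends to a map on $M$ by declaring it zero on $M_0$; since $B$ is strongly nondegenerate on $M$, this extended functional is $B(m, -)$ for a unique $m \in M$. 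Writing $m = m_0 + m'$ with $m_0 \in M_0$, $m' \in M_0^\perp$, and testing against elements of $M_0$ shows $B(m_0, -) $ vanishes on $M_0$, hence $m_0 = 0$ by strong nondegeneracy of $B\restriction_{M_0\times M_0}$, so $m = m' \in M_0^\perp$ and $B(m', -)\restriction_{M_0^\perp} = f$; this gives surjectivity of $M_0^\perp \to \mathrm{Hom}_R(M_0^\perp;N)$. Injectivity: if $m' \in M_0^\perp$ satisfies $B(m', y) = 0$ for all $y \in M_0^\perp$, then since also $B(m', y) = 0$ for all $y \in M_0$ (as $m' \in M_0^\perp$), we get $B(m', y) = 0$ for all $y \in M_0 \oplus M_0^\perp = M$, so $m' = 0$ by strong nondegeneracy of $B$. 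Thus $B\restriction_{M_0^\perp \times M_0^\perp}$ is strongly nondegenerate.

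I expect the main subtlety — modest as it is — to be bookkeeping with the two adjunction isomorphisms \eqref{eq:IsoInducedByBilinearMapLeft} and \eqref{eq:IsoInducedByBilinearMapRight} and the restriction maps, making sure that "strongly nondegenerate" is consistently unpacked as "the induced map to the dual module is an isomorphism" and that the symmetry hypothesis on $B$ lets me pass freely between left and right versions (so that $M_0^\perp$ computed from one side equals that from the other). The extension-of-functionals step in part (2) — extending by zero on $M_0$, which requires the direct sum decomposition already established — is the one place where part (1) is genuinely used, so the order of the two parts matters. No completeness, finiteness, or freeness of the modules is needed; everything runs over the arbitrary commutative ring $R$ purely from the strong nondegeneracy hypotheses.
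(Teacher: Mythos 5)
Your proposal is correct and follows essentially the same route as the paper's proof: part (1) via the unique $m_0\in M_0$ representing the restricted functional $B(m,-)|_{M_0}$ (your appeal to Lemma \ref{lemm:PhiBeingSurjection} with $M_1=M_0$ is just a packaged form of that step), and part (2) via extending a functional on $M_0^{\perp}$ to $M$ using the decomposition from part (1) and then invoking strong nondegeneracy of $B$ on $M$ and on $M_0$. The only cosmetic difference is that you extend by zero and show the $M_0$-component vanishes, whereas the paper simply discards the $M_0$-component since it does not affect values on $M_0^{\perp}$.
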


\begin{proof}
We prove the first part. Need to prove that $M_0 \cap M_0^{\perp} = \{0\}$ and $M = M_0 + M_0^{\perp}$. To prove the former assertion, let $x \in M_0 \cap M_0^{\perp}$. Then for any $y \in M_0$ we have $B(x, y) = 0$. It follows that $x = 0$ since $x \in M_0$ and the restriction of $B$ to $M_0 \times M_0$ is strongly nondegenerate. Next we prove the latter assertion. Let $x \in M$ be arbitary. Consider the map $y \mapsto B(x, y)$ on $M_0$. It is a linear map on $M_0$, as easily verified. Hence by strong nondegeneracy of the restriction of $B$ to $M_0 \times M_0$, there exists a unique $x_0 \in M_0$ such that $B(x, y) = B(x_0, y)$ for $y \in M_0$. It follows that $x - x_0 \in M_0^{\perp}$, and $x = x_0 + (x - x_0) \in M_0 + M_0^{\perp}$. This concludes the proof of the first part.

We now prove the second part. It suffices to prove that the map $\pi \colon M_0^{\perp} \rightarrow \mathrm{Hom}_R(M_0^{\perp}; N)$, $x \mapsto (y \mapsto B(x, y))$ is an isomorphism of $R$-modules. By the first part of this lemma, any map in $\mathrm{Hom}_R(M_0^{\perp};N)$ extends to a map in $\mathrm{Hom}_R(M;N)$. Since $B$ is strongly nondegenerate, for any $f \in \mathrm{Hom}_R(M_0^{\perp}; N)$, there exists an $x \in M$ such that $B(x, y) = f(y)$ for $y \in M_0^{\perp}$. We write $x = x_1 + x_2$ with $x_1 \in M_0$ and $x_2 \in M_0^{\perp}$ by the first part of this lemma. One then verifies immediately thay $\pi(x_2) = f$, hence $\pi$ is surjective. Since $\pi$ is obviously $R$-linear, it remains to prove that $\pi$ is injective, i.e., that $\ker(\pi) = 0$. Let $x \in \ker(\pi)$, that is, $B(x, y) = 0$ for any $y \in M_0^{\perp}$. It follows that $x \perp M$. Consequently $x = 0$ by strong nondegeneracy of $B$. This concludes the proof of the second part.
\end{proof}

We now turn to the subject of quadratic maps. Let $M, N$ be $R$-modules and let $Q \colon M \rightarrow N$. We call $Q$ a quadratic map if the following conditions hold:
\begin{enumerate}
\item $Q(rx) = r^2Q(x)$ for $r\in R$, $x \in M$.
\item The associated map $B_Q(x, y) := Q(x+y)-Q(x)-Q(y)$ is bilinear.
\end{enumerate}
Define $\underline{M}=(M,Q)$ and call it a quadratic map module or quadratic module (over the ring $R$). The bilinear map module $(M, B_Q)$ is called the associated bilinear map module of $\underline{M}$. Note that the bilinear map $B_Q$ associated with $Q$ is necessarily symmetric. In fact, when the module $N$ satisfies that for any $v \in N$, there exists a unique $w \in N$ such that $v = w + w$ (denote w by $2^{-1}v$ even if 2 is not a unit in $R$) there is a one-to-one correspondence between quadratic maps and symmetric bilinear maps. The correspondence is given by the following two maps: the map $Q \mapsto B_Q$ from quadratic maps onto symmetric bilinear maps, and the map $B \mapsto (x \mapsto 2^{-1}B(x, x))$ from symmetric bilinear maps onto quadratic maps. They are inverse to each other, as straightforwardly verified. Thus, under this circumstance, the theory of symmetric bilinear maps and quadratic maps are indistinguishable. However, when $N$ does not satisfy that condition, the two theories are different. This happens, for instance, when $R$ is a filed of characteristic 2 and $N \neq \{0\}$, or when $N$ has torsion elements of order $2$. The relation of $Q$ and $B_Q$ can be easily generalized to the case of more than two summands by induction:
\begin{equation}
\label{eq:RelationBetQandBQ}
Q\left(\sum_{1 \leq i \leq n}x_i\right)=\sum_{1 \leq i \leq n}Q(x_i) + \sum_{1 \leq i < j \leq n}B_Q(x_i,x_j).
\end{equation}

We can form orthogonal sums, quotients, and tensor products of bilinear map modules or quadratic modules. The construction of tensor products need $N$ to be an associative algebra over $R$ and plays an important role in the theory of bilinear form module: It is used to define the multiplication on the Witt ring of $R$. We shall not need this construction. For details, see Milnor and Husemoller \cite[p.~14]{MH1973}. Nevertheless, the constructions of orthogonal sums and quotients are important in subsequent sections, so we review these constructions here.

Let $\underline{M}_i=(M_i, B_i \colon M_i \times M_i \rightarrow N)$, $i \in I$ be a family of bilinear map modules. Recall that we always assume that $B$ is symmetric, skew-symmetric, or alternating. Define its orthogonal (external\footnote{Similarly one can consider orthogonal internal direct sum which is isometrically isomorphic to orthogonal external direct sum. By abuse of language, we do not distinguish these two concepts.}) direct sum as $\bigoplus_{i \in I}M_i$ equipped with the bilinear map defined as follows:
\begin{equation*}
B\left((x_i)_{i \in I}, (y_i)_{i \in I} \right) = \sum_{i \in I}B_i(x_i, y_i).
\end{equation*}
Note that external direct sum $\bigoplus_{i \in I}M_i$ is the set of elements in Cartesian product $\prod_{i \in I}M_i$ with a finite support; hence the sum in the above formula makes sense. It can be deduced that if all $B_i$'s are nondegenerate, then $B$ is nondegenerate. Moreover, assume that $I$ is finite. Then the condition that all $B_i$'s are strongly nondegenerate implies that $B$ is strongly nondegenerate. In addition, if all $B_i$ are symmetric (skew-symmetric, alternating respectively), then $B$ is symmetric (skew-symmetric, alternating respectively).

The construction of orthogonal sums also applies to quadratic modules. Let $\underline{M}_i=(M_i, Q_i \colon M_i \rightarrow N)$, $i \in I$ be a family of quadratic modules. Then define its orthogonal sum as $\bigoplus_{i \in I}M_i$ equipped with the quadratic map defined as follows:
\begin{equation*}
Q\left((x_i)_{i \in I}\right) = \sum_{i \in I}Q_i(x_i).
\end{equation*}
It follows that $B_Q\left((x_i)_{i \in I}, (y_i)_{i \in I} \right) = \sum_{i \in I}B_{Q,i}(x_i, y_i)$, where $B_Q$ is the associated bilinear map of $Q$, and $B_{Q, i}$ the associated bilinear map of $Q_i$. Consequenty, if all the summands are nondegenerate quadratic modules, then their orthogonal sum is nondegenerate. When $I$ is finite this fact remains true if we substitute strong nondegeneracy for nondegeneracy.

\label{deff:Quotients}We also need the concept of quotients of bilinear map modules or quadratic modules. Let $M$, $N$ be $R$-modules and let $B \in \mathrm{Hom}_R(M, M; N)$. We write $\underline{M}=(M, B)$. Let $M_0$ and $N_0$ be submodules of $M$ and $N$ respectively. Suppose that $B(x, y) \in N_0$ for any $(x,y) \in M\times M_0$ or $M_0 \times M$. Then we define the quotient of $\underline{M}$ with respect to $(M_0, N_0)$ as the $R$-module $M/M_0$ equipped with the bilinear map
\begin{align*}
M/M_0 \times M/M_0 &\longrightarrow N/N_0 \\
(x + M_0, y + M_0) &\longmapsto B(x, y) + N_0.
\end{align*}
One can immediately verify that this map is well-defined and is indeed $R$-bilinear, hence gives $M/M_0$ a bilinear map module structure. In addition, if $B$ is symmetric (skew-symmetric, or alternating respectively), then the map on quotient module is alse symmetric (skew-symmetric, or alternating respectively).

In the quadratic module case, $B$ is replaced by a quadratic map $Q\colon M\rightarrow N$ and $\underline{M}=(M,Q)$. The condition for $B$ is replaced by a condition for $Q$: $Q(x) \in N_0$ for any $x \in M_0$ and $B_Q(x, y) \in N_0$ for any $(x, y) \in M \times M_0$. Then we can define the quotient of $\underline{M}$ with respect to $(M_0, N_0)$ the $R$-module $M/M_0$ equipped with the quadratic map
\begin{align*}
M/M_0 &\longrightarrow N/N_0 \\
x + M_0 &\longmapsto Q(x) + N_0
\end{align*}
which gives $M/M_0$ a quadratic module structure.

Like all other mathematical structures, there are morphisms between bilinear map modules which make them a category. A morphism from a bilinear map module $\underline{M}_1=(M_1, B_1 \colon M_1 \times M_1 \rightarrow N)$ to another $\underline{M}_2=(M_2, B_2 \colon M_2 \times M_2 \rightarrow N)$ is a $R$-linear map $\pi$ from $M_1$ into $M_2$ preserving bilinear maps, i.e. satisfying $B_1(x, y)=B_2(\pi(x),\pi(y))$. If, in addition, $\pi$ is a bijection, we call $\pi$ an isometric isomorphism. For quadratic modules, we can define morphisms and isometric isomorphisms similarly: just replace the condition $B_1(x, y)=B_2(\pi(x),\pi(y))$ with $Q_1(x)=Q_2(\pi(x))$.

We conclude this section by introducing methods of matrix theory into the theory of bilinear map modules. To this end, we need $N$ to be an unitary associative $R$-algebra, that is, $N$ is both an unitary ring and $R$-module such that the ring multiplication are $R$-bilinear. Thus we have a category of matrices whose objects are positive integers and morphisms from $n$ to $m$ are matrices of $m$ rows and $n$ columns with entries in $N$. The composition of this category is the usual matrix multiplication. A direct verification (take care that we do not assume that $N$ is commutative) shows these data indeed make up a category. In another words, the matrix multiplication over the ring $N$ is associative and for each positive integer $n$ there is a unique $n \times n$ identity matrix even if $N$ is non-commutative.

Recall that we let $M_1$ and $M_2$ be $R$-modules and $B \in \mathrm{Hom}_R(M_1, M_2; N)$. Let $\mathbf{x} = (x_1, x_2, \ldots, x_s)$ and $\mathbf{y} = (y_1, y_2, \ldots, y_t)$ be finite sequences in $M_1$ and $M_2$ respectively. Define the Gram matrix of $B$ with respect to the pair $(\mathbf{x}, \mathbf{y})$ (or simply, with respect to $\mathbf{x}$ if $\mathbf{x}=\mathbf{y}$) to be the matrix$\left(B(x_i, y_j)\right)_{1 \leq i \leq s, \, 1 \leq j \leq t}$. Denote it by $G_B$ if the sequences $\mathbf{x}$ and $\mathbf{y}$ are clear. A fundamental method to calculate the value of a bilinear map is as follows. Let $\alpha_1, \alpha_2, \ldots, \alpha_s$ and $\beta_1, \beta_2, \ldots, \beta_t$ be in $R$ and denote the multiplication identity of $N$ by $1_N$. Then we have
\begin{equation}
\label{eq:CalculateBilinearMapValue}
B\left(\sum_{i=1}^s \alpha_i x_i, \sum_{j=1}^t \beta_j y_j\right) = \left(\alpha_1 1_N, \ldots, \alpha_s 1_N\right) \cdot G_B \cdot \left(\beta_1 1_N, \ldots, \beta_t 1_N\right)^{\mathrm{T}}.
\end{equation}

Recall that if $A$ is a matrix with $m$ rows and $n$ columns, then by a left inverse of $A$ we mean a matrix $B$ with $n$ rows and $m$ columns such that $B \cdot A$ is the $n \times n$ identity matrix. Similarly we can define the notion of right inverses of $A$.

\begin{lemm}
\label{lemm:GramMatrixLeftRightInverse}
Assume that the map $R \rightarrow N, \, r \mapsto r\cdot1_N$ is injective. If $G_B$ has a right inverse, then $(x_1, x_2, \ldots, x_s)$ is $R$-linearly independent. Likewise, if $G_B$ has a left inverse, then $(y_1, y_2, \ldots, y_t)$ is $R$-linearly independent.
\end{lemm}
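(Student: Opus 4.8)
The plan is to pair any hypothetical $R$-linear relation among the $x_i$ (resp.\ among the $y_j$) with the bilinear map $B$, read the resulting system of scalar equations as a single matrix identity via the evaluation formula \eqref{eq:CalculateBilinearMapValue}, and then cancel $G_B$ by multiplying by the given one-sided inverse on the correct side, using that matrix multiplication over $N$ is associative.

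First I would treat the right-inverse case. Suppose $\sum_{i=1}^{s}\alpha_i x_i = 0$ with $\alpha_1,\dots,\alpha_s \in R$; the goal is to conclude that all $\alpha_i = 0$. Fix $j \in \{1,\dots,t\}$ and evaluate $B\bigl(\sum_i \alpha_i x_i,\, y_j\bigr)$. Applying \eqref{eq:CalculateBilinearMapValue} with the second tuple of scalars equal to the $j$-th standard basis vector of $R^t$, this value equals the $j$-th entry of the row vector $(\alpha_1 1_N,\dots,\alpha_s 1_N)\cdot G_B$; on the other hand it equals $B(0, y_j) = 0$ by bilinearity. Letting $j$ run through $1,\dots,t$ gives the identity
\[
(\alpha_1 1_N,\dots,\alpha_s 1_N)\cdot G_B = 0_{1\times t}.
\]
Now let $C$ be a right inverse of $G_B$, so that $G_B \cdot C = \mathrm{I}_s$. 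Multiplying the displayed identity on the right by $C$ and using associativity of matrix multiplication over $N$,
\[
(\alpha_1 1_N,\dots,\alpha_s 1_N) = (\alpha_1 1_N,\dots,\alpha_s 1_N)\cdot(G_B\cdot C) = \bigl((\alpha_1 1_N,\dots,\alpha_s 1_N)\cdot G_B\bigr)\cdot C = 0.
\]
Hence $\alpha_i 1_N = 0$ for each $i$, and the hypothesis that $r \mapsto r\cdot 1_N$ is injective forces $\alpha_i = 0$. Thus $(x_1,\dots,x_s)$ is $R$-linearly independent.

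The left-inverse assertion I would prove by the mirror argument: given $\sum_{j=1}^{t}\beta_j y_j = 0$ with $\beta_j \in R$, evaluating $B(x_i,\sum_j \beta_j y_j)$ for $i = 1,\dots,s$ and using \eqref{eq:CalculateBilinearMapValue} yields $G_B\cdot(\beta_1 1_N,\dots,\beta_t 1_N)^{\mathrm{T}} = 0_{s\times 1}$; left-multiplying by a left inverse $D$ of $G_B$ (so $D\cdot G_B = \mathrm{I}_t$) and again invoking associativity gives $(\beta_1 1_N,\dots,\beta_t 1_N)^{\mathrm{T}} = 0$, whence $\beta_j = 0$ for all $j$. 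I do not expect any real obstacle here; the only points requiring care are purely bookkeeping: tracking the matrix shapes so that the one-sided inverse is applied on the side on which it acts (recall $G_B$ is in general rectangular, so one cannot interchange left and right inverses), and invoking associativity over the possibly non-commutative algebra $N$ rather than over $R$. Note that no symmetry, skew-symmetry, or alternating hypothesis on $B$ enters the argument.
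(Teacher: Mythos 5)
Your argument is correct and is essentially the paper's own proof: pair the hypothetical relation with $B$ via \eqref{eq:CalculateBilinearMapValue}, cancel $G_B$ by the one-sided inverse on the appropriate side using associativity of matrix multiplication over $N$, and finish with the injectivity of $r \mapsto r\cdot 1_N$. The paper only writes out the right-inverse case and leaves the mirror case to the reader, which you spell out; no substantive difference.
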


\begin{proof}
We only prove for the case that $G_B$ has a right inverse. Let  $\alpha_1, \alpha_2, \ldots, \alpha_s$ be in $R$ such that $\sum_{i=1}^s \alpha_i x_i=0$. It follows from \eqref{eq:CalculateBilinearMapValue} that
\begin{equation*}
\left(B\left(\sum_{i=1}^s \alpha_i x_i, y_1\right),\dots, B\left(\sum_{i=1}^s \alpha_i x_i, y_t\right)\right) = \left(\alpha_1 1_N, \ldots, \alpha_s 1_N\right) \cdot G_B \cdot \mathrm{I}_t
\end{equation*}
where $\mathrm{I}_t$ is the $t \times t$ identity matrix. Multiplying a right inverse of $G_B$ on both sides and using the fact that $\sum_{i=1}^s \alpha_i x_i=0$ we obtain that $\alpha_1 1_N= \ldots= \alpha_s 1_N = 0_N$. Hence $\alpha_1= \ldots= \alpha_s = 0_N$ since the map $r \mapsto r\cdot1_N$ is injective. This proves that $(x_1, x_2, \ldots, x_s)$ is linearly independent.
\end{proof}

In the case $M_1 = M_2 = M$ and $N=R$ one can prove more.
\begin{lemm}
\label{lemm:GramMatrixNeqR}
Let $M_1 = M_2 = M$ and $N=R$. Let Let $\mathbf{x} = (x_1, x_2, \ldots, x_s)$ be a sequence in $M$ and $G_B$ the Gram matrix with respect to $\mathbf{x}$. Define $M^\prime = \sum_{1 \leq i \leq s}Rx_i$. Then $G_B$ is invertible if and only if the sum defining $M^\prime$ is a direct sum and the restriction of $B$ to $M^\prime \times M^\prime$ is strongly nondegenerate.
\end{lemm}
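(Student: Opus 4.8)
The plan is to reduce everything to the dictionary between $R$-linear maps of free modules and matrices. Here ``the sum defining $M'$ is a direct sum'' is to be read as: the canonical surjection $R^s \to M'$ sending the $i$-th standard basis vector to $x_i$ is an isomorphism, equivalently that $(x_1,\dots,x_s)$ is $R$-linearly independent. Under this hypothesis $(x_1,\dots,x_s)$ is a basis of the free $R$-module $M'$, and $\mathrm{Hom}_R(M';R)$ is free of rank $s$ with the dual basis $(x_1^\ast,\dots,x_s^\ast)$ determined by $x_i^\ast(x_j)=\delta_{ij}$.

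The computational core of the argument is the following observation, valid as soon as $(x_1,\dots,x_s)$ is a basis of $M'$. Let $\phi\colon M'\to\mathrm{Hom}_R(M';R)$ be the value at the restriction of $B$ to $M'\times M'$ of the map \eqref{eq:IsoInducedByBilinearMapLeft} (taken with $M_1=M_2=M'$), and let $\psi\colon M'\to\mathrm{Hom}_R(M';R)$ be the corresponding value of \eqref{eq:IsoInducedByBilinearMapRight}. Then $\phi(x_i)(x_j)=B(x_i,x_j)=(G_B)_{ij}$, hence $\phi(x_i)=\sum_{j}(G_B)_{ij}\,x_j^\ast$, so the matrix of $\phi$ with respect to the bases $(x_i)$ and $(x_j^\ast)$ is $G_B^{\mathrm{T}}$; likewise the matrix of $\psi$ is $G_B$. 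An $R$-linear map between free $R$-modules of the same finite rank is an isomorphism if and only if its matrix lies in $\mathrm{GL}_s(R)$ (if the matrix is invertible its inverse defines an inverse map, and conversely the matrix of the inverse map is an inverse matrix), and $G_B\in\mathrm{GL}_s(R)$ if and only if $G_B^{\mathrm{T}}\in\mathrm{GL}_s(R)$. Therefore, once $(x_i)$ is a basis of $M'$, the restriction of $B$ to $M'\times M'$ is strongly nondegenerate (i.e.\ both $\phi$ and $\psi$ are isomorphisms) if and only if $G_B$ is invertible.

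Granting this, the two implications are immediate. If $G_B$ is invertible then it has a right inverse, so by Lemma \ref{lemm:GramMatrixLeftRightInverse}---applicable because $N=R$ makes $r\mapsto r\cdot 1_R$ the identity map, hence injective---the sequence $(x_1,\dots,x_s)$ is $R$-linearly independent, i.e.\ the sum defining $M'$ is direct and $(x_i)$ is a basis of $M'$; the observation above then gives that $B$ is strongly nondegenerate on $M'\times M'$. Conversely, if the sum defining $M'$ is direct then $(x_i)$ is a basis of $M'$, and strong nondegeneracy of $B$ on $M'\times M'$ forces $G_B\in\mathrm{GL}_s(R)$ by the same observation.

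There is no real obstacle here---the proof is a routine unwinding of the definition of strong nondegeneracy together with Lemma \ref{lemm:GramMatrixLeftRightInverse}---but one point genuinely requires care: the meaning of ``the sum defining $M'$ is a direct sum'' must be the one fixed above, not merely that the submodules $Rx_i$ form an internal direct sum. (For instance, with $R=\mathbb{Z}/4\mathbb{Z}$, $M=M'=\mathbb{Z}/2\mathbb{Z}$, $s=1$ and $B(x_1,x_1)=2$, the one-term ``sum'' is trivially direct and $B$ is strongly nondegenerate on $M'$, yet $G_B=(2)\notin\mathrm{GL}_1(\mathbb{Z}/4\mathbb{Z})$.) With that convention in place, $(x_1,\dots,x_s)$ genuinely forms a basis, the matrix-of-a-linear-map dictionary applies, and the equivalence follows.
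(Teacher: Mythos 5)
Your proof is correct. The paper states Lemma \ref{lemm:GramMatrixNeqR} without proof (it is treated as routine), so there is no argument of the author's to compare against; what you give is the standard one: Lemma \ref{lemm:GramMatrixLeftRightInverse} (applicable since $N=R$ makes $r\mapsto r\cdot 1_N$ the identity) extracts linear independence from invertibility of $G_B$, and once $(x_1,\dots,x_s)$ is a basis of the free module $M'$, the two maps induced by $B|_{M'\times M'}$ have matrices $G_B^{\mathrm{T}}$ and $G_B$ with respect to $(x_i)$ and its dual basis, so strong nondegeneracy of the restriction is equivalent to $G_B\in\mathrm{GL}_s(R)$. Your caveat about the phrase ``the sum defining $M'$ is a direct sum'' is well taken and worth recording: read merely as an internal direct sum of the submodules $Rx_i$, the statement is false --- your example with $R=\mathbb{Z}/4\mathbb{Z}$, $M'=\mathbb{Z}/2\mathbb{Z}$, $s=1$, $B(x_1,x_1)=2$ is valid, since there $B$ restricted to $M'$ is strongly nondegenerate while $G_B=(2)\notin\mathrm{GL}_1(\mathbb{Z}/4\mathbb{Z})$ --- so the intended reading must be $R$-linear independence of $(x_1,\dots,x_s)$, which is also the only form in which the lemma is used later (for free $\mathbb{Z}$-modules).
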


\section{Lattices and finite quadratic modules}
\label{sec:Lattices and finite quadratic modules}
In this section, we recall some basic definitions and properties on lattices over the integers, and on finite quadratic modules. Then we give a complete proof of the Jordan decomposition theorem of finite quadratic modules.
\begin{deff}
\label{deff:DefOfLattices}
Let $L$ be a $\mathbb{Z}$-module, and $B$ a symmetric bilinear form from $L \times L$ into the real numbers $\mathbb{R}$. Set $\underline{L}=(L, B)$. We say that $\underline{L}$ is a lattice (over the integers) if $L$ is free of finite rank (i.e. has a finite $\mathbb{Z}$-basis).
\end{deff}

The quadratic form $Q\colon L \rightarrow \mathbb{R},\, x \mapsto 2^{-1}B(x,x)$ is called the quadratic form of the lattice $\underline{L}$. If the image of $B$ is contained in $\mathbb{Z}$, then we call $\underline{L}$ integral; moreover, if the image of $Q$ is contained in $\mathbb{Z}$ we call $\underline{L}$ even integral (or simply call it even). It follows that $\underline{L}$ is even if and only if $B(x, x) \in 2\mathbb{Z}$ for any $x \in L$. We call an integral lattice which is not even an odd integral lattice, or simply an odd lattice. If for any nonzero $x \in L$ we have $Q(x) > 0$, then we call $\underline{L}$ a positive definite lattice.

\begin{rema}
\label{rema:RemarkOnDefOfLattices}
If we have an ambient real sysmetric bilinear form vector space $\underline{V} = (V, B \colon V\times V \rightarrow \mathbb{R})$), then we say that $L$ is a $\mathbb{Z}$-lattice in $\underline{V}$ in a more strict sense: It means not only that $L$ is a free $Z$-module of rank $\dim_{\mathbb{R}}V$ contained in $V$, but also that $L$ has a $\mathbb{Z}$-basis which is also a $\mathbb{R}$-basis of $V$ additionally.
\end{rema}

Let $R$ be a commutative extension ring of $\mathbb{Z}$, we shall use the tensor product $R \otimes_\mathbb{Z} L$ which is a free $R$-module. Note that the bilinear form $B$ extends to a bilinear form on $R \otimes_\mathbb{Z} L$. We are interested mainly in the case that $R$ is the field of rationals $\mathbb{Q}$, of reals $\mathbb{R}$, or of complex numbers $\mathbb{C}$. We choose realizations of tensor products such that $L \subseteq \mathbb{Q} \otimes L \subseteq \mathbb{R} \otimes L \subseteq \mathbb{C} \otimes L$.

\begin{conv}
We are only interested in nondegeneracy case. In the followings, we always assume that the bilinear extension $B^\prime$ of $B$ to $\mathbb{R} \otimes L$ is strongly nondegenerate.
Note that $B^\prime$ is strongly nondegenerate if and only if it is nondegenerate, since $\mathbb{R}$ is a field and $\dim\mathbb{R}\otimes L < \infty$. The following example shows that saying that $B$ is nondegenerate is not enough.
\end{conv}

\begin{examp}
Let $L=\mathbb{Z}^2$ and $(e_1,e_2)$ be the standard basis. Define a bilinear form $B$ as $B(e_1,e_1)=1,\,B(e_1,e_2)=B(e_2,e_1)=\sqrt{2},\,B(e_2,e_2)=2$. Then the map $L \rightarrow \mathrm{Hom}_{\mathbb{Z}}(L;\mathbb{R}),\,v \mapsto (w \mapsto B(v,w))$ is a $\mathbb{Z}$-monomorphism, thus $B$ is nondegenerate in the sense in Section \ref{sec:Billinear map modules and quadratic map modules}. However, since the Gram matrix with respect to this basis is not invertible even over the field $\mathbb{R}$, we have that the bilinear extension of $B$ to $\mathbb{R} \otimes L$ is not nondegenerate.
\end{examp}

We define the dual lattice of $\underline{L}$ which is important in the theory of lattices as follows.

\begin{deff}
\label{deff:DefOfDualLattice}
The dual lattice of $\underline{L}$ is the lattice contained in the $\mathbb{R}$-space $\mathbb{R} \otimes L$ whose element $v$ satisfies $B(v, w) \in \mathbb{Z}$ for any $w \in L$. The dual lattice is denoted by $L^\sharp$ which inherits the bilinear form $B$ defined on $\mathbb{R} \otimes L$. When $L$ is a lattice in an ambient vector space $\underline{V} = (V, B \colon V\times V \rightarrow \mathbb{R})$ with a nondegenerate sysmetric bilinear form $B$, we can also define the dual lattice of $L$ in $\underline{V}$ in a similar manner.
\end{deff}

Let $(e_1,\dots,e_n)$ be a $\mathbb{Z}$-basis of $\underline{L}$, namely, $L=\bigoplus_{1 \leq i \leq n}\mathbb{Z}e_i$. Let $G_B$ be the Gram matrix with respect to this basis. Then $\mathop{\mathrm{det}}G_B \neq 0$. Thus, for each $\mathbb{R}$-linear form $\mathbb{R} \otimes L \rightarrow \mathbb{R},\,e_j \mapsto \delta_{ij}$, we can find an $e_i^{\ast} \in \mathbb{R} \otimes L$ such that $B(e_i^{\ast},e_j) = \delta_{ij}$. One verifies immediately that $L^\sharp=\bigoplus_i \mathbb{Z}e_i^\ast$. The basis $(e_1^\ast, \dots, e_n^\ast)$ of $L^\sharp$ is called the dual basis of $(e_1,\dots,e_n)$ with respect to $B$ in $\mathbb{R} \otimes L$.

\begin{prop}
\label{propBasisPropOnDualLattice}
Let $\underline{L}=(L,B)$ be a lattice of rank $n \in \mathbb{Z}_{\geq 1}$ and let $(e_1,\dots,e_n)$ be a $\mathbb{Z}$-basis of $\underline{L}$. Let $G_B$ be the Gram matrix of $B$ with respect to this basis. Then
\begin{enumerate}
\item The dual basis can be expressed as $(e_1^\ast, \dots, e_n^\ast)^{\mathrm{T}}$ = $G_{B}^{-1}\cdot (e_1,\dots,e_n)^{\mathrm{T}}$.
\item The Gram matrix of $\underline{L}^\sharp$ with respect to $(e_1^\ast, \dots, e_n^\ast)$ is $G_B^{-1}$.
\item The double dual lattice $(L^\sharp)^\sharp$ as a lattice in $\mathbb{R} \otimes L$ is equal to $L$.
\item The lattice $\underline{L}$ is integral if and only if $L \subseteq L^\sharp$, if and only if entries of $G_B$ are all integers.
\end{enumerate}
\end{prop}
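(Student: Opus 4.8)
The plan is to reduce all four parts to matrix manipulations via the fundamental formula \eqref{eq:CalculateBilinearMapValue}, using the facts recorded just before the statement that $L^\sharp = \bigoplus_i \mathbb{Z}e_i^\ast$ and that $\det G_B \neq 0$ (the latter because the extension of $B$ to $\mathbb{R}\otimes L$ is strongly nondegenerate), and using throughout that $G_B$ is symmetric, hence so is $G_B^{-1}$. For part (1), I would write $e_i^\ast = \sum_k c_{ik}e_k$ with $c_{ik}\in\mathbb{R}$ — legitimate since $(e_1,\dots,e_n)$ is an $\mathbb{R}$-basis of $\mathbb{R}\otimes L$ — set $C = (c_{ik})$, and compute $B(e_i^\ast, e_j)$ by \eqref{eq:CalculateBilinearMapValue} to obtain the $(i,j)$ entry of $C G_B$. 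The defining condition $B(e_i^\ast, e_j) = \delta_{ij}$ then reads $C G_B = \mathrm{I}_n$, so $C = G_B^{-1}$, which is exactly the claimed identity $(e_1^\ast, \dots, e_n^\ast)^{\mathrm{T}} = G_B^{-1}\cdot(e_1,\dots,e_n)^{\mathrm{T}}$.

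Part (2) follows by substituting the formula from part (1) into $B(e_i^\ast, e_j^\ast)$ and applying \eqref{eq:CalculateBilinearMapValue} once more: the Gram matrix of $\underline{L}^\sharp$ relative to $(e_1^\ast,\dots,e_n^\ast)$ comes out as $G_B^{-1}\,G_B\,(G_B^{-1})^{\mathrm{T}} = G_B^{-1}$, using symmetry of $G_B^{-1}$. For part (3), I would first observe that $(e_1^\ast,\dots,e_n^\ast)$ is again an $\mathbb{R}$-basis of $\mathbb{R}\otimes L$ (it arises from $(e_1,\dots,e_n)$ through the invertible matrix $G_B^{-1}$), so $\mathbb{R}\otimes L^\sharp = \mathbb{R}\otimes L$ and the double dual is formed inside the same space. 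Applying parts (1) and (2) to $\underline{L}^\sharp$, whose basis $(e_i^\ast)$ has Gram matrix $G_B^{-1}$, its dual basis is $(G_B^{-1})^{-1}\cdot(e_1^\ast,\dots,e_n^\ast)^{\mathrm{T}} = G_B\cdot G_B^{-1}\cdot(e_1,\dots,e_n)^{\mathrm{T}} = (e_1,\dots,e_n)^{\mathrm{T}}$, whence $(L^\sharp)^\sharp = \bigoplus_i\mathbb{Z}e_i = L$.

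Part (4) is a cycle of three easy implications. If $\underline{L}$ is integral, then every $x\in L$ satisfies $B(x, w)\in\mathbb{Z}$ for all $w\in L$, so $x\in L^\sharp$; hence $L\subseteq L^\sharp$. If $L\subseteq L^\sharp$, then each $e_i\in L^\sharp$, so $B(e_i,e_j)\in\mathbb{Z}$ for all $i,j$, i.e. the entries of $G_B$ are integers. And if the entries of $G_B$ are integers, then for $x = \sum_i a_i e_i$ and $y = \sum_j b_j e_j$ with $a_i, b_j\in\mathbb{Z}$ one has $B(x,y) = \sum_{i,j} a_i b_j B(e_i,e_j)\in\mathbb{Z}$, so $\underline{L}$ is integral.

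I do not expect a genuine obstacle here; the only points needing care are the bookkeeping of transposes and the row-versus-column conventions in \eqref{eq:CalculateBilinearMapValue}, and, in part (3), making sure the double dual is really computed inside $\mathbb{R}\otimes L$, which is why I first verify $\mathbb{R}\otimes L^\sharp = \mathbb{R}\otimes L$.
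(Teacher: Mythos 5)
Your proposal is correct and follows essentially the same route as the paper: expand $e_i^\ast$ in the basis $(e_1,\dots,e_n)$, use the formula \eqref{eq:CalculateBilinearMapValue} together with $B(e_i^\ast,e_j)=\delta_{ij}$ to identify the coefficient matrix as $G_B^{-1}$, deduce part (2) from symmetry of $G_B^{-1}$, obtain part (3) from $e_i^{\ast\ast}=e_i$, and check part (4) directly from the definitions. Your write-up merely supplies the bookkeeping (and the observation $\mathbb{R}\otimes L^\sharp=\mathbb{R}\otimes L$) that the paper leaves implicit.
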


\begin{proof}
Write $(e_1^\ast, \dots, e_n^\ast)^{\mathrm{T}}$ = $A\cdot (e_1,\dots,e_n)^{\mathrm{T}}$, where $A=(a_{ij})$ is an $n \times n$ matrix. It follows from the fact $B(e_i^{\ast},e_j) = \delta_{ij}$ and \eqref{eq:CalculateBilinearMapValue} that $\mathrm{I}_n = AG_B\mathrm{I}_n$. The first part of the theorem then follows. The second part follows from the first part, \eqref{eq:CalculateBilinearMapValue} and the fact that $G_B^{-1}$ is symmetric. To prove the third part, note that $e_i^{\ast\ast}=e_i$. The last part of the theorem is obvious by definition.
\end{proof}

Let $\underline{L}=(L,B)$ be an integral lattice. Then by Proposition \ref{propBasisPropOnDualLattice} we have $L \subseteq L^\sharp$. We now use the construction of quotients in Section \ref{deff:Quotients}. Since $B(L, L^\sharp)=B(L^\sharp,L) \subseteq \mathbb{Z}$ and $B(L^\sharp,L^\sharp) \subseteq \mathbb{Q}$, the map $L^\sharp/L \times L^\sharp/L \rightarrow \mathbb{Q}/\mathbb{Z}, \,(v+L, w+L) \mapsto B(v,w)+\mathbb{Z}$ is well-defined and $\mathbb{Z}$-bilinear where $\mathbb{Q}/\mathbb{Z}$ is regarded as a quotient of $\mathbb{Z}$-modules and hence is itself a $\mathbb{Z}$-module. It can be verified that the bilinear map module $(L^\sharp/L, (v+L,w+L) \mapsto B(v,w)+\mathbb{Z})$ is nondegenerate by Proposition \ref{propBasisPropOnDualLattice}(3).

If, moreover, $\underline{L}=(L,B)$ is an even integral lattice, we can regard $\underline{L}$ as a quadratic map module (In this situation, the theory of quadratic maps and symmetric bilinear maps are indistinguishable, as explained in Section \ref{sec:Billinear map modules and quadratic map modules}.). Now we have $Q(L) \subseteq \mathbb{Z}$ and $B(L, L^\sharp) \subseteq \mathbb{Z}$. Thus the map $L^\sharp/L \rightarrow \mathbb{Q}/\mathbb{Z},\, v+L \mapsto Q(v)+\mathbb{Z}$ is well-defined and quadratic, as explained in Section \ref{sec:Billinear map modules and quadratic map modules}. This defines a quadratic module $(L^\sharp/L,\, v+L \mapsto Q(v)+\mathbb{Z})$ whose corresponding bilinear map is $L^\sharp/L \times L^\sharp/L \rightarrow \mathbb{Q}/\mathbb{Z}, \,(v+L, w+L) \mapsto B(v,w)+\mathbb{Z}$. It follows that the quadratic module just defined is nondegenerate. We will give a proof that it is actually strongly nondegenerate later! See Corollary \ref{coro:FQMStroNondeg}.

No matter $L$ is even or old, the quotient $L^\sharp/L$ must be a finite abelian group since elements in $L^\sharp/L$ are all torsion elements and $L^\sharp/L$ is finitely generated. A well-known argument shows that $\vert L^\sharp/L\vert = \vert\mathop{\mathrm{det}}G_B\vert$ where $G_B$ is the Gram matrix with respect to any $\mathbb{Z}$-basis of $L$.

The finite abelian group (i.e. finite $\mathbb{Z}$-module) $L^\sharp/L$ together with the quadratic map $v+L \mapsto Q(v)+\mathbb{Z}$ where $L$ is even is a primary example of the following concept:

\begin{deff}
\label{deff:DefOfFQM}
Let $M$ be a finite abellian group, and $Q:M\rightarrow \mathbb{Q}/\mathbb{Z}$ be a quadratic map. If the corresponding bilinear map $(x,y) \mapsto Q(x+y)-Q(x)-Q(y)$ is nondegenerate in the sense defined in Section \ref{sec:Billinear map modules and quadratic map modules}, then the pair $\underline{M}=(M,Q)$ is called a finite quadratic module.
\end{deff}

Note that comparing to the general definition of quadratic map modules in Section \ref{sec:Billinear map modules and quadratic map modules}, here we add an additional condition of nondegeneracy. In the followings, when we speak of a finite quadratic module, we always mean a nondegenerate one. As a convention, we also treat the zero module with the zero map as a finite quadratic module (in fact, this convention coincides with the concepts in Section \ref{sec:Billinear map modules and quadratic map modules}).

\begin{examp}
\label{examp:DiscModule}
Let $\underline{L}=(L, B)$ be an even lattice whose quadratic form is $Q$, then $(L^\sharp/L,\, v+L \mapsto Q(v)+\mathbb{Z})$ is a finite quadratic module, as discussed before Definition \ref{deff:DefOfFQM}. This is called the discriminant module (group) of $\underline{L}$ and is denoted by $D_{\underline{L}}$. Note that the discriminant module of the zero lattice is the zero module.
\end{examp}

The constructions discussed in Section \ref{sec:Billinear map modules and quadratic map modules}, such as orthogonal sums, isometric isomorphisms on general quadratic map modules or bilinear map modules carry over to finite quadratic modules.

The following lemma is useful and basic in the theory of finite quadratic modules.
\begin{lemm}
\label{lemm:QoverZIsoCmul}
Denote by $\mathbb{C}^\times$ the multiplicative group of nonzero complex numbers (regarding as a $\mathbb{Z}$-module). Then the map $\mathbb{Q}/\mathbb{Z} \rightarrow \mathbb{C}^\times,\,a+\mathbb{Z} \mapsto \mathrm{e}^{2\uppi \mathrm{i}a}$ is a $\mathbb{Z}$-module monomorphism whose image is the torsion elements in $\mathbb{C}^\times$. Moreover, let $M$ be a finite $\mathbb{Z}$-module and denote by $\widehat{M}$ the dual group of $M$, i.e., the group of all linear characters of $M$. Then $\mathrm{Hom}_{\mathbb{Z}}(M; \mathbb{Q}/\mathbb{Z})$ is isomorphic to $\widehat{M}$ as $\mathbb{Z}$-modules.
\end{lemm}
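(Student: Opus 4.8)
The plan is to verify the two assertions in turn, both by completely elementary means. For the first assertion, I would take the map $e\colon \mathbb{Q}/\mathbb{Z} \to \mathbb{C}^\times$, $a+\mathbb{Z}\mapsto \mathrm{e}^{2\uppi\mathrm{i}a}$, and check that it is well-defined (since $\mathrm{e}^{2\uppi\mathrm{i}n}=1$ for $n\in\mathbb{Z}$) and a group homomorphism by the functional equation $\mathrm{e}^{2\uppi\mathrm{i}(a+b)}=\mathrm{e}^{2\uppi\mathrm{i}a}\mathrm{e}^{2\uppi\mathrm{i}b}$; since $\mathbb{Q}/\mathbb{Z}$ is written additively and $\mathbb{C}^\times$ multiplicatively, this is a $\mathbb{Z}$-module map. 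Injectivity amounts to the statement that $\mathrm{e}^{2\uppi\mathrm{i}a}=1$ forces $a\in\mathbb{Z}$, which is standard. For the image, note every element of $\mathbb{Q}/\mathbb{Z}$ has finite order, so its image is a root of unity, hence a torsion element of $\mathbb{C}^\times$; conversely, if $z\in\mathbb{C}^\times$ satisfies $z^m=1$ for some $m\ge 1$, then $|z|=1$, so $z=\mathrm{e}^{2\uppi\mathrm{i}t}$ for some real $t$, and $z^m=1$ gives $mt\in\mathbb{Z}$, i.e. $t\in\mathbb{Q}$, so $z$ lies in the image of $e$.

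For the second assertion, I would combine the first part with the observation that $M$ is finite. By definition $\widehat{M}=\mathrm{Hom}_{\mathbb{Z}}(M;\mathbb{C}^\times)$, the group of linear characters. The monomorphism $e$ induces a homomorphism $e_*\colon \mathrm{Hom}_{\mathbb{Z}}(M;\mathbb{Q}/\mathbb{Z})\to \mathrm{Hom}_{\mathbb{Z}}(M;\mathbb{C}^\times)=\widehat{M}$ by post-composition, $f\mapsto e\circ f$. This $e_*$ is injective because $e$ is. It is surjective precisely because $M$ is finite: any $\chi\in\widehat{M}$ has image consisting of torsion elements of $\mathbb{C}^\times$ (as $\chi(x)^{\mathrm{ord}(x)}=1$), hence by the first part the image of $\chi$ lies in $e(\mathbb{Q}/\mathbb{Z})$, so $\chi$ factors through $e$, i.e. $\chi=e\circ f$ with $f=e^{-1}\circ\chi\in\mathrm{Hom}_{\mathbb{Z}}(M;\mathbb{Q}/\mathbb{Z})$ (here $e^{-1}$ denotes the inverse of the isomorphism $\mathbb{Q}/\mathbb{Z}\to e(\mathbb{Q}/\mathbb{Z})$, and $f$ is $\mathbb{Z}$-linear since both $e^{-1}$ and $\chi$ are). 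Thus $e_*$ is an isomorphism of $\mathbb{Z}$-modules, giving $\mathrm{Hom}_{\mathbb{Z}}(M;\mathbb{Q}/\mathbb{Z})\cong\widehat{M}$.

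There is essentially no serious obstacle here; the statement is foundational and the only point requiring the hypothesis that $M$ is finite (rather than merely torsion, where it would still hold) is that it guarantees every character of $M$ has finite-order values, so that the image of $e$ is exactly where the characters land. I would keep the write-up short, emphasizing the factorization argument for surjectivity of $e_*$ and simply citing the elementary facts about roots of unity for the rest.
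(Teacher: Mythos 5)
Your proposal is correct and follows essentially the same route as the paper: the paper also treats the first assertion as elementary and defines the isomorphism $\mathrm{Hom}_{\mathbb{Z}}(M;\mathbb{Q}/\mathbb{Z})\to\widehat{M}$ by post-composition $f\mapsto (x\mapsto \mathrm{e}^{2\uppi\mathrm{i}f(x)})$, leaving the verification to the reader. You have merely written out the details (injectivity from that of $e$, surjectivity via the factorization of a character through the roots of unity), which the paper omits.
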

\begin{proof}
It is obviously that $a+\mathbb{Z} \mapsto \mathrm{e}^{2\uppi \mathrm{i}a}$ is monomorphism and the image is the torsion elements in $\mathbb{C}^\times$. It remains to prove that $\mathrm{Hom}_{\mathbb{Z}}(M; \mathbb{Q}/\mathbb{Z})$ is isomorphic to $\widehat{M}$. Define a map $\pi$ from $\mathrm{Hom}_{\mathbb{Z}}(M; \mathbb{Q}/\mathbb{Z})$ to $\widehat{M}$ as follows: for $f \in \mathrm{Hom}_{\mathbb{Z}}(M; \mathbb{Q}/\mathbb{Z})$, let $\pi(f)$ be the map whose value at $x \in M$ is $\mathrm{e}^{2 \uppi \mathrm{i} f(x)}$. One can verify that $\pi$ is well-defined and gives the desired isomorphism.
\end{proof}
\begin{coro}
\label{coro:FQMStroNondeg}
Let $\underline{M}=(M,Q)$ be a finite quadratic module (or more generally a finite bilinear map module with the bilinear map nondegenerate). Then $\underline{M}$ is strongly nondegenerate.
\end{coro}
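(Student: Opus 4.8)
The plan is to reduce the statement about a finite bilinear map module $\underline{M}=(M,B)$ with $B\colon M\times M\to\mathbb{Q}/\mathbb{Z}$ nondegenerate to the results already established for general bilinear map modules, using Lemma \ref{lemm:QoverZIsoCmul} to replace $\mathbb{Q}/\mathbb{Z}$ by a more convenient target. Since the quadratic case is subsumed by the bilinear case (the associated bilinear map $B_Q$ is nondegenerate by hypothesis, and strong nondegeneracy of $\underline{M}$ was defined via $B_Q$), it suffices to treat a finite $\mathbb{Z}$-module $M$ together with a nondegenerate symmetric bilinear map $B\colon M\times M\to\mathbb{Q}/\mathbb{Z}$. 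Applying the isomorphism \eqref{eq:IsoInducedByBilinearMapLeft} with $M_1=M_2=M$ and $N=\mathbb{Q}/\mathbb{Z}$, nondegeneracy says precisely that the induced map $\phi_B\colon M\to\mathrm{Hom}_{\mathbb{Z}}(M;\mathbb{Q}/\mathbb{Z})$ is a monomorphism; I must upgrade this to an isomorphism.

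First I would invoke Lemma \ref{lemm:QoverZIsoCmul} to identify $\mathrm{Hom}_{\mathbb{Z}}(M;\mathbb{Q}/\mathbb{Z})$ with the dual group $\widehat{M}$ as $\mathbb{Z}$-modules. Since $M$ is a finite abelian group, a standard fact from the theory of finite abelian groups (the duality $M\cong\widehat{M}$, non-canonically) gives $\vert\widehat{M}\vert=\vert M\vert$, hence $\vert\mathrm{Hom}_{\mathbb{Z}}(M;\mathbb{Q}/\mathbb{Z})\vert=\vert M\vert$. Now $\phi_B$ is an injective homomorphism between two finite sets of equal cardinality, so it is automatically bijective, and being $\mathbb{Z}$-linear it is an isomorphism of $\mathbb{Z}$-modules. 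By the symmetry of $B$ (so that orthogonality is a symmetric relation, and the map \eqref{eq:IsoInducedByBilinearMapRight} gives the same information as \eqref{eq:IsoInducedByBilinearMapLeft}), the other induced map $M\to\mathrm{Hom}_{\mathbb{Z}}(M;\mathbb{Q}/\mathbb{Z})$ is likewise an isomorphism. Therefore $B$ is strongly nondegenerate in the sense of Definition \ref{deff:StrongNondeg}, i.e.\ $\underline{M}$ is strongly nondegenerate.

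There is essentially no obstacle here; the only point requiring a little care is the cardinality equality $\vert\widehat{M}\vert=\vert M\vert$, which is where the finiteness of $M$ is genuinely used — over an infinite module the argument would fail, and indeed strong nondegeneracy would not follow from nondegeneracy. For completeness one could note that $\vert\widehat{M}\vert=\vert M\vert$ follows by decomposing $M$ into cyclic groups via the structure theorem for finite abelian groups, since $\widehat{M_1\oplus M_2}\cong\widehat{M_1}\oplus\widehat{M_2}$ and $\widehat{\mathbb{Z}/n\mathbb{Z}}\cong\mathbb{Z}/n\mathbb{Z}$. This handles the parenthetical general case, and the quadratic case follows because, as remarked, the definition of (strong) nondegeneracy for a quadratic module is made through its associated bilinear map.
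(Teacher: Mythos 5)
Your proposal is correct and follows essentially the same route as the paper: identify $\mathrm{Hom}_{\mathbb{Z}}(M;\mathbb{Q}/\mathbb{Z})$ with $\widehat{M}$ via Lemma \ref{lemm:QoverZIsoCmul}, use $\vert\widehat{M}\vert=\vert M\vert$ from elementary finite abelian group theory, and conclude that the injective map induced by nondegeneracy is a bijection between finite sets of equal cardinality. The extra remarks on the second induced map and the reduction of the quadratic case to the bilinear one are harmless elaborations of what the paper leaves implicit.
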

\begin{proof}
By definition of strong nondegeneracy in Section \ref{sec:Billinear map modules and quadratic map modules}, we need to prove that the map $M \rightarrow \mathrm{Hom}_{\mathbb{Z}}(M; \mathbb{Q}/\mathbb{Z}),\,x \mapsto (y \mapsto B(x,y))$ is an isomorphism where $B$ is the bilinear map associated with $Q$ (i.e., $B(x,y)=Q(x+y)-Q(x)-Q(y)$). It is already a monomorphism since $B$ is nondegenerate by assumption. It is indeed an isomorphism since $\vert \mathrm{Hom}_{\mathbb{Z}}(M; \mathbb{Q}/\mathbb{Z}) \vert = \vert \widehat{M} \vert = \vert M \vert < \infty$ by Lemma \ref{lemm:QoverZIsoCmul} and elementary theory of finite abelian groups.
\end{proof}

The next is another important property of finite quadratic modules.
\begin{prop}
\label{prop:HomIsoQuoAndDoubleOrtho}
Let $\underline{M}=(M,Q)$ be a finite quadratic module and $M_0$ be a submodule. Then the map $\phi$ in Lemma \ref{lemm:OrthoBasicProp} (with $R=\mathbb{Z}$ and $N = \mathbb{Q}/\mathbb{Z}$) is actually a surjection. Hence $M / M_0^{\perp} \cong \mathrm{Hom}_\mathbb{Z}(M_0; \mathbb{Q}/\mathbb{Z})$ (the canonical map is an isomorphism). Moreover, we have $\vert M_0\vert \cdot \vert M_0^\perp \vert = \vert M \vert$ and $(M_0^\perp)^\perp = M_0$.
\end{prop}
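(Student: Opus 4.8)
The plan is to verify the three assertions in sequence, using Lemma~\ref{lemm:PhiBeingSurjection} to get surjectivity of $\phi$, then reading off the isomorphism from Lemma~\ref{lemm:OrthoBasicProp}, then deducing the cardinality identity and the double-orthogonal identity from it.

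First I would establish that $\phi$ is a surjection. Apply Lemma~\ref{lemm:PhiBeingSurjection} with $M_1 = M$. The hypotheses needed are: the restriction of $B_Q$ to $M \times M$ is strongly nondegenerate, and every map in $\mathrm{Hom}_{\mathbb{Z}}(M_0; \mathbb{Q}/\mathbb{Z})$ extends to a map in $\mathrm{Hom}_{\mathbb{Z}}(M; \mathbb{Q}/\mathbb{Z})$. The first holds by Corollary~\ref{coro:FQMStroNondeg}. The second is the statement that $\mathbb{Q}/\mathbb{Z}$ is an injective $\mathbb{Z}$-module (equivalently, divisible), so that restriction $\mathrm{Hom}_{\mathbb{Z}}(M; \mathbb{Q}/\mathbb{Z}) \to \mathrm{Hom}_{\mathbb{Z}}(M_0; \mathbb{Q}/\mathbb{Z})$ is surjective for the submodule inclusion $M_0 \hookrightarrow M$; alternatively one can argue directly with finite abelian groups by writing $M$ as an internal direct sum adapted to a decomposition of $M_0$, or by Lemma~\ref{lemm:QoverZIsoCmul} translating to characters and extending characters of subgroups. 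Once both hypotheses are checked, Lemma~\ref{lemm:PhiBeingSurjection} gives that $\phi$ is surjective, and then the last sentence of Lemma~\ref{lemm:OrthoBasicProp} yields $M/M_0^{\perp} \cong \mathrm{Hom}_{\mathbb{Z}}(M_0; \mathbb{Q}/\mathbb{Z})$ via the canonical map.

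Next, for the cardinality identity, combine this isomorphism with Lemma~\ref{lemm:QoverZIsoCmul}, which gives $\lvert \mathrm{Hom}_{\mathbb{Z}}(M_0; \mathbb{Q}/\mathbb{Z}) \rvert = \lvert \widehat{M_0} \rvert = \lvert M_0 \rvert$, the last equality being the elementary fact that a finite abelian group is isomorphic to its dual. Hence $\lvert M \rvert / \lvert M_0^{\perp} \rvert = \lvert M/M_0^{\perp} \rvert = \lvert M_0 \rvert$, i.e. $\lvert M_0 \rvert \cdot \lvert M_0^{\perp} \rvert = \lvert M \rvert$. For the double-orthogonal identity, first note $M_0 \subseteq (M_0^{\perp})^{\perp}$ trivially from symmetry of $B_Q$. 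Applying the cardinality identity just proved to the submodule $M_0^{\perp}$ in place of $M_0$ gives $\lvert M_0^{\perp} \rvert \cdot \lvert (M_0^{\perp})^{\perp} \rvert = \lvert M \rvert = \lvert M_0 \rvert \cdot \lvert M_0^{\perp} \rvert$, so $\lvert (M_0^{\perp})^{\perp} \rvert = \lvert M_0 \rvert$; combined with the inclusion $M_0 \subseteq (M_0^{\perp})^{\perp}$ and finiteness, this forces equality $M_0 = (M_0^{\perp})^{\perp}$.

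The only genuine obstacle is the extension property in the first step — that every homomorphism $M_0 \to \mathbb{Q}/\mathbb{Z}$ extends over $M$. I expect the paper to invoke divisibility/injectivity of $\mathbb{Q}/\mathbb{Z}$, or to give the short direct argument for finite abelian groups; everything after that is bookkeeping with the isomorphism and counting.
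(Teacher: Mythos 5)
Your proposal is correct and follows essentially the same route as the paper: surjectivity of $\phi$ via Lemma \ref{lemm:PhiBeingSurjection} with $M_1=M$ (strong nondegeneracy from Corollary \ref{coro:FQMStroNondeg}, extension of maps $M_0\to\mathbb{Q}/\mathbb{Z}$ via divisibility of $\mathbb{Q}/\mathbb{Z}$, equivalently character extension through Lemma \ref{lemm:QoverZIsoCmul}), then the isomorphism from Lemma \ref{lemm:OrthoBasicProp}, then the counting argument and the substitution of $M_0^\perp$ for $M_0$ to get $(M_0^\perp)^\perp=M_0$. The paper handles the one point you flagged exactly as you anticipated, by translating to characters and citing the classical character-extension fact for finite abelian groups.
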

A proof can be found in \cite[Proposition 1.7]{Boylan2015}. For the reader's convenience, we also give a proof.
\begin{proof}
Let $B$ be the associated bilinear map (i.e., $B(x,y)=Q(x+y)-Q(x)-Q(y)$). Firstly, we prove that the map $\phi$ is a surjection. By Lemma \ref{lemm:PhiBeingSurjection} it suffices to prove that $B$ is strongly nondenegerate, and each map in $\mathrm{Hom}_{\mathbb{Z}}(M_0; \mathbb{Q}/\mathbb{Z})$  extends to a map in $\mathrm{Hom}_{\mathbb{Z}}(M; \mathbb{Q}/\mathbb{Z})$. Corollary \ref{coro:FQMStroNondeg} implies immediately the former assertion. To prove the latter, note that it is equivalent to saying that each linear character of $M_0$ extends to a linear character of $M$ by Lemma \ref{lemm:QoverZIsoCmul}. This is well-known. See, for example, \cite[Chap. VI, Sect. 1, Proposition 1]{Serre1973}. Secondly, it follows from Lemma \ref{lemm:OrthoBasicProp} and the fact just proved immediately that $M / M_0^{\perp} \cong \mathrm{Hom}_\mathbb{Z}(M_0; \mathbb{Q}/\mathbb{Z})$. Finally, we prove that $\vert M_0\vert \cdot \vert M_0^\perp \vert = \vert M \vert$ and $(M_0^\perp)^\perp = M_0$. By the second part of this proposition and Lemma \ref{lemm:QoverZIsoCmul} we have $\vert M / M_0^{\perp} \vert = \vert \mathrm{Hom}_\mathbb{Z}(M_0; \mathbb{Q}/\mathbb{Z}) \vert = \vert M_0 \vert$. Hence $\vert M_0\vert \cdot \vert M_0^\perp \vert = \vert M \vert$. Substitute $M_0^\perp$ for $M_0$; we obtain $\vert M_0^\perp \vert \cdot \vert (M_0^\perp)^\perp \vert = \vert M \vert$, and hence $ \vert M_0 \vert = \vert (M_0^\perp)^\perp \vert$. It follows immediately that $M_0 = (M_0^\perp)^\perp$ since  $M_0 \subseteq (M_0^\perp)^\perp$ and $M_0$ is finite.
\end{proof}

We now turn to give concrete examples of finite quadratic modules. The modules given in the following definition turn out to be cornerstones of all finite quadratic modules---any finite quadratic module can be decomposed as an orthogonal sum of these modules (up to isomorphism).

\begin{deff}
\label{deff:FQMABC}
Let $p$ be a prime, $r$ a positive integer, $a$ an integer not divisible by $p$. Define the following finite quadratic modules:
\begin{align}
\label{eq:DefAp}\underline{A}_{p^r}^a &:= \left(\mathbb{Z}/p^r\mathbb{Z},\,x+p^r\mathbb{Z} \mapsto \frac{a}{p^r}x^2+\mathbb{Z}\right) \, \text{for }p>2, \\
\label{eq:DefA2}\underline{A}_{2^r}^a &:= \left(\mathbb{Z}/2^r\mathbb{Z},\,x+2^r\mathbb{Z} \mapsto \frac{a}{2^{r+1}}x^2+\mathbb{Z}\right) \, \text{for } p=2, \\
\label{eq:DefB2}\underline{B}_{2^r}   &:= \left(\mathbb{Z}/2^r\mathbb{Z} \times \mathbb{Z}/2^r\mathbb{Z},\,(x+2^r\mathbb{Z},y+2^r\mathbb{Z}) \mapsto \frac{x^2+xy+y^2}{2^r}+\mathbb{Z} \right), \\
\label{eq:DefC2}\underline{C}_{2^r}   &:= \left(\mathbb{Z}/2^r\mathbb{Z} \times \mathbb{Z}/2^r\mathbb{Z},\,(x+2^r\mathbb{Z},y+2^r\mathbb{Z}) \mapsto \frac{xy}{2^r}+\mathbb{Z}\right).
\end{align}
\end{deff}
\begin{rema}
\label{rema:BasicPropOfSimpleFQM}
One can verify that the quadratic maps of these modules are well-defined and the associated bilinear maps are nondegenerate. Thus they are indeed finite quadratic modules. Moreover, the modules $\underline{A}_{p^r}^a$ are indecomposable  as abelian groups since they are primary cyclic. On the other hand, although the modules $\underline{B}_{2^r}$ and $\underline{C}_{2^r}$ are decomposable as abelian groups, they can not be written as orthogonal sums of proper submodules.
\end{rema}

\begin{rema}
\label{rema:SimpleModIso}
It can be verified that modules of different types described in Definition \ref{deff:FQMABC} are not isomorphic. Maybe the only case which needs explaining is that $\underline{B}_{2^r} \not\cong \underline{C}_{2^r}$. This can be verified by observing that there are exactly $4^r - 4^{r-1}$ elements in $\underline{B}_{2^r}$ such that the value of the quadratic map is an odd number divided by $2^r$ mod $\mathbb{Z}$, while there are only $4^{r-1}$ elements in $\underline{C}_{2^r}$ satisfying this condition. Now consider modules of the same type. For an odd prime $p$ and $a, b$ not divisible by $p$,  we can prove that $\underline{A}_{p^r}^a$ is isomorphic to $\underline{A}_{p^r}^b$ if and only if the Legendre symbol $\legendre{a}{p}=\legendre{b}{p}$. Thus, for a fixed prime $p>2$ and fixed $r \in \mathbb{Z}_{\geq 1}$, the modules $\underline{A}_{p^r}^a$ give exactly two isomorphism classes when $a$ runs through the integers not divisible by $p$. Now for $a,b$ not divisible by $2$, we have $\underline{A}_{2^r}^a$ is isomorphic to $\underline{A}_{2^r}^b$ if and only if $ab^{-1}$ is a quadratic residue modulo $2^{r+1}$. Thus, for $p=2$ and $r=1$, the modules $\underline{A}_{2}^a$ give exactly two isomorphism classes, and for $p=2$ and $r>1$, the modules $\underline{A}_{2^r}^a$ give exactly four isomorphism classes when $a$ runs through odd integers.
\end{rema}

\begin{examp}
Besides modules in Definition \ref{deff:FQMABC}, there are still other modules which can be defined in a natural way, although they may be decomposable, or isomorphic to a module given in Definition \ref{deff:FQMABC}. We define
\begin{align}
\label{eq:DefAm}\underline{A}_{m} &:= \left(\mathbb{Z}/m\mathbb{Z},\,x+m\mathbb{Z} \mapsto \frac{1}{m}x^2+\mathbb{Z}\right) \, \text{for }2 \nmid m, \\
\label{eq:DefAm2}\underline{A}_{m} &:= \left(\mathbb{Z}/m\mathbb{Z},\,x+m\mathbb{Z} \mapsto \frac{1}{2m}x^2+\mathbb{Z}\right) \, \text{for }2 \mid m, \\
\label{eq:DefD}\underline{D}_{2^r}^{a,b,c}   &:= \left(\mathbb{Z}/2^r\mathbb{Z} \times \mathbb{Z}/2^r\mathbb{Z},\,(x+2^r\mathbb{Z},y+2^r\mathbb{Z}) \mapsto \frac{ax^2+bxy+cy^2}{2^r}+\mathbb{Z} \right).
\end{align}
Note that in above definitions, we require that $m \in \mathbb{Z}_{\geq 1}$, $r \in \mathbb{Z}_{\geq 1}$ and $a,b,c \in \mathbb{Z}$ with $b$ odd. One can verify immediately that these are indeed finite quadratic modules. Also note that when $b$ is even in \eqref{eq:DefD}, then $\underline{D}_{2^r}^{a,b,c}$ is degenerate.
\end{examp}

\begin{lemm}
\label{lemm:DabcIsoToBorC}
Let $b$ be an odd integer and $r$ be a positive integer. If $a$ and $c$ are both odd integers, then $\underline{D}_{2^r}^{a,b,c}$ defined in \eqref{eq:DefD} is isometrically isomorphic to $\underline{B}_{2^r}$; otherwise it is isomorphic to $\underline{C}_{2^r}$.
\end{lemm}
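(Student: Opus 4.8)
The plan is to translate the statement into the $\mathrm{GL}_2(\mathbb{Z}/2^r\mathbb{Z})$-equivalence of coefficient triples. Write $F_{a,b,c}(v)=av_1^2+bv_1v_2+cv_2^2$ for $v=(v_1,v_2)$; an isometric isomorphism $\underline{D}_{2^r}^{a,b,c}\to\underline{D}_{2^r}^{a',b',c'}$ is exactly a matrix $M\in\mathrm{GL}_2(\mathbb{Z}/2^r\mathbb{Z})$ with $F_{a',b',c'}(Mv)\equiv F_{a,b,c}(v)\pmod{2^r}$ for all $v$, and I write $(a,b,c)\sim(a',b',c')$ when such an $M$ exists. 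It then suffices to show $(a,b,c)\sim(1,1,1)$ when $a,c$ are odd, and $(a,b,c)\sim(0,1,0)$ otherwise. The moves I would use, each immediately seen to be of this form, are: scaling a variable by a unit $u$, giving $(a,b,c)\sim(u^2a,ub,c)$ and $(a,b,c)\sim(a,ub,u^2c)$; the shears $v\mapsto(v_1+tv_2,v_2)$ and $v\mapsto(v_1,v_2+tv_1)$, giving $(a,b,c)\sim(a,b+2at,at^2+bt+c)$ and $(a,b,c)\sim(a+bt+ct^2,b+2ct,c)$; the swap $v\mapsto(v_2,v_1)$, giving $(a,b,c)\sim(c,b,a)$; and one further move below. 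As $b$ is odd, scaling the second variable by $b^{-1}$ shows $(a,b,c)\sim(a,1,b^{-2}c)$, so we may assume $b=1$ from the start, with $a$ and the parity of $c$ unchanged.

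Suppose first that $a$ or $c$ is even; after a swap assume $c$ is even. The shear $v\mapsto(v_1,v_2+tv_1)$ turns the first coefficient of $(a,1,c)$ into $a+t+ct^2$, and since $c$ is even the map $t\mapsto t+ct^2$ is injective on $\mathbb{Z}/2^r\mathbb{Z}$ (from $t_1+ct_1^2=t_2+ct_2^2$ one deduces $(t_1-t_2)\bigl(1+c(t_1+t_2)\bigr)=0$ with the second factor a unit), hence bijective, so some $t$ yields $(a,1,c)\sim(0,b_1,c)$ with $b_1=1+2ct$ a unit; scaling the first variable by $b_1^{-1}$ then gives $(0,b_1,c)\sim(0,1,c)$, and the shear $v\mapsto(v_1-cv_2,v_2)$ gives $(0,1,c)\sim(0,1,0)=\underline{C}_{2^r}$. (If $a$ is even and $c$ odd, swap first; if both are even, apply this directly.) This handles the ``otherwise'' clause.

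There remains the case $a,c$ both odd, with target $\underline{B}_{2^r}=\underline{D}_{2^r}^{1,1,1}$; this is the hard part, the obstacle being that scalings and shears cannot force $a$ to be $1$, since that would need $a$ to be a square modulo $2^r$. I would bring in the extra move $M_\delta=\begin{pmatrix}1&\beta\\0&\delta\end{pmatrix}$, where $\delta$ is an odd integer and $\beta$ an integer with $\beta\equiv b(1-\delta)/(2a)\pmod{2^r}$ (legitimate since $1-\delta$ is even and $a$ is a unit). Its determinant is $\delta$, so $M_\delta$ reduces to an element of $\mathrm{GL}_2(\mathbb{Z}/2^r\mathbb{Z})$, and a direct computation, using that $(1-\delta^2)/4$ is an even integer, gives $F_{a,b,c}(M_\delta v)\equiv F_{a,b,c'}(v)\pmod{2^r}$ with $c'\equiv c\delta^2+b^2(1-\delta^2)/(4a)$; thus $(a,b,c)\sim(a,b,c')$, and $c'\equiv c^*\pmod{2^r}$ for a prescribed $c^*$ is equivalent, on multiplying by $4a$, to $\delta^2(4ac-b^2)\equiv 4ac^*-b^2\pmod{2^{r+2}}$.

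Now, for $a,b,c,c^*$ all odd, both $4ac-b^2$ and $4ac^*-b^2$ are $\equiv 3\pmod 8$, so their ratio is $\equiv 1\pmod 8$, hence a square modulo $2^{r+2}$, and an odd $\delta$ as required exists. With $b=1$ this gives $(a,1,c)\sim(a,1,1)$; a swap gives $(a,1,1)\sim(1,1,a)$; and $M_\delta$ once more (this time sending the last coefficient $a$ to $1$, legitimate since $4a-1\equiv 3\pmod 8$) gives $(1,1,a)\sim(1,1,1)=\underline{B}_{2^r}$. The sole delicate point is the existence of $\delta$, which rests on the elementary fact that the squares in $(\mathbb{Z}/2^m\mathbb{Z})^\times$ are exactly the residues $\equiv 1\pmod 8$ (for $m\ge 3$; for $r=1$ one can instead note that $\underline{B}_2$ takes the value $1/2$ on all nonzero vectors, so every linear isomorphism works); everything else is bookkeeping with the moves above.
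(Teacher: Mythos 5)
Your argument is correct, but it follows a genuinely different route from the paper's. The paper makes the same initial identification of isometric isomorphisms with matrices in $\mathrm{GL}_2(\mathbb{Z}/2^r\mathbb{Z})$, but then treats the two cases by separate ad hoc constructions: when $a$ or $c$ is even it exhibits a single explicit matrix, built from the inverse of the bijection $t \mapsto (ac)t+t^{-1}$ on $(\mathbb{Z}/2^r\mathbb{Z})^\times$ (injectivity proved via a $2$-adic valuation on $\mathbb{Z}/2^r\mathbb{Z}$); when $a$ and $c$ are both odd it argues by induction on $r$, lifting an isometry $\underline{D}_{2^r}^{a,b,c}\cong\underline{B}_{2^r}$ modulo $2^r$ to one modulo $2^{r+1}$ by solving three linear congruences for correction terms of the form $2^r j_i$. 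You instead run a reduction-theory argument directly on the coefficient triple: normalize $b=1$ by a unit scaling, kill $a$ by a shear in the even case, and in the odd case use the triangular move $M_\delta$, whose effect is precisely to multiply the discriminant $4ac-b^2$ by $\delta^2$ modulo $2^{r+2}$, so that existence of $\delta$ reduces to the standard fact that an odd residue is a square modulo $2^m$ ($m\ge 3$) exactly when it is $\equiv 1 \pmod 8$, applicable here because all the relevant discriminants are $\equiv 3 \pmod 8$. What your approach buys is uniformity and transparency: one toolkit of elementary transformations handles both cases, there is no induction, and the controlling invariant in the odd case (the class of $b^2-4ac$ modulo squares in $(\mathbb{Z}/2^{r+2}\mathbb{Z})^\times$) is made explicit. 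What the paper's approach buys is minimal arithmetic input and constructiveness: it never invokes the structure of the square classes modulo $2^{r+2}$, only solvability of linear congruences, and in the even case it writes the isometry in closed form. One small remark: your parenthetical fallback for $r=1$ is unnecessary, since $m=r+2\ge 3$ already holds there, though the observation about $\underline{B}_2$ is correct.
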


\begin{proof}
Note that isomorphisms of $\mathbb{Z}/2^r\mathbb{Z} \times \mathbb{Z}/2^r\mathbb{Z}$ as an abelian group are exactly isomorphisms of that as a $\mathbb{Z}/2^r\mathbb{Z}$-module, and that $\mathbb{Z}/2^r\mathbb{Z} \times \mathbb{Z}/2^r\mathbb{Z}$ is a free $\mathbb{Z}/2^r\mathbb{Z}$-module. Hence we use invertible matrices with entries in $\mathbb{Z}/2^r\mathbb{Z}$ to represent isomorphisms of $\mathbb{Z}/2^r\mathbb{Z} \times \mathbb{Z}/2^r\mathbb{Z}$: the $2 \times 2$ invertible matrix $A$ represents the isomorphism $(a,b)^{\mathrm{T}} \mapsto A\cdot(a,b) ^{\mathrm{T}}$, where $a,\,b \in \mathbb{Z}/2^r\mathbb{Z}$.

Suppose at least one of $a$ and $c$ are even. We shall give an isometric  isomorphism of $\underline{D}_{2^r}^{a,b,c}$ and $\underline{C}_{2^r}$. To this end, we define a ``valuation'' $f$ on the ring $\mathbb{Z}/2^r\mathbb{Z}$ as follows. For nonzero $x\in \mathbb{Z}/2^r\mathbb{Z}$, if $x\in 2^j\mathbb{Z}/2^r\mathbb{Z}$ but $x\notin 2^{j+1}\mathbb{Z}/2^r\mathbb{Z}$, then $f(x)=j$; moreover, $f(0)=r$. We have $f(xy)=\min\{f(x)+f(y),\,r\}$. We also need the map $\sigma_a \colon (\mathbb{Z}/2^r\mathbb{Z})^\times \rightarrow (\mathbb{Z}/2^r\mathbb{Z})^\times$, $t \mapsto at+t^{-1}$, where $(\mathbb{Z}/2^r\mathbb{Z})^\times$ denotes the group of units, and $a$ satisfies $f(a)>0$. It is not hard to see that $\sigma_a$ is well-defined. We assert that this map is an injection, hence a bijection since $(\mathbb{Z}/2^r\mathbb{Z})^\times$ has finite cardinality. To prove this, let $t_1$ and $t_2$ be in $(\mathbb{Z}/2^r\mathbb{Z})^\times$ such that $at_1+t_1^{-1}=at_2+t_2^{-1}$. Then $at_1t_2(t_1-t_2)=t_1-t_2$. Apply the function $f$ to both sides; we obtain $t_1=t_2$. This concludes the proof of injectivity. Denoted by $\tau_a$ the inverse function of $\sigma_a$. Now one can verify that the isomophism given by the matrix ($a$, $b$, $1$ denote their image under the canonical map from $\mathbb{Z}$ onto $\mathbb{Z}/2^r\mathbb{Z}$)
\begin{equation*}
\begin{pmatrix}
a & (\tau_{ac}(b))^{-1} \\
1& c\cdot\tau_{ac}(b)
\end{pmatrix}
\end{equation*}
is indeed an isometric isomorphism from $\underline{D}_{2^r}^{a,b,c}$ onto $\underline{C}_{2^r}$.

On the other hand, suppose both of $a$ and $c$ are odd. We use induction on $r$ to show that $\underline{D}_{2^r}^{a,b,c}$ is isometrically isomorphic to $\underline{B}_{2^r}$ with an isomorphism given by a matrix whose entries on the main diagonal are non-units in the ring  $\mathbb{Z}/2^r\mathbb{Z}$, and entries off the main diagonal are units. For $r=1$, the desired isomorphism is given by the matrix whose entries on the main diagonal are $0$, and entries off the main diagonal are $1$. For the induction step, assume that we have found an isometric isomorphism from $\underline{D}_{2^r}^{a,b,c}$ onto $\underline{B}_{2^r}$, which is given by the matrix
\begin{equation*}
\begin{pmatrix}
u & v \\
w& z
\end{pmatrix}
\end{equation*}
where $u$ and $z$ are non-units in $\mathbb{Z}/2^r\mathbb{Z}$, and $v$ and $w$ are units in $\mathbb{Z}/2^r\mathbb{Z}$. By abuse of language, we still use $u$ ($v,\,w,\,z$ respectively) to denote an element of the inverse image of $u$ ($v,\,w,\,z$ respectively) under the map $\mathbb{Z}/2^{r+1}\mathbb{Z} \rightarrow \mathbb{Z}/2^{r}\mathbb{Z}$, $t+2^{r+1}\mathbb{Z} \mapsto t+2^{r}\mathbb{Z}$. (In the followings, we also use these letters to represent elements of the inverse image of corresponding values under the canonical map $\mathbb{Z}\rightarrow \mathbb{Z}/2^{r+1}\mathbb{Z}$.) We want to find the desired isomorphism from $\underline{D}_{2^{r+1}}^{a,b,c}$ onto $\underline{B}_{2^{r+1}}$ among the following matrices:
\begin{equation*}
\begin{pmatrix}
u + 2^rj_0 & v+2^rj_1 \\
w + 2^rj_2& z+2^rj_3
\end{pmatrix}
\end{equation*}
with $j_0,\,j_1,\,j_2,\,j_3 \in \{0+2^{r+1}\mathbb{Z},\,1+2^{r+1}\mathbb{Z}\}$. For any choice of $j_0,\,j_1,\,j_2,\,j_3$, the entries on the main diagonal of the above matrix are non-units and those off the main diagonal are units in $\mathbb{Z}/2^{r+1}\mathbb{Z}$. Particularly, this matrix is invertible, hence gives an isomorphism from $\underline{D}_{2^{r+1}}^{a,b,c}$ onto $\underline{B}_{2^{r+1}}$ as abelian groups. So we shall choose $j_0,\,j_1,\,j_2,\,j_3$ such that the quadratic maps of $\underline{D}_{2^{r+1}}^{a,b,c}$ and $\underline{B}_{2^{r+1}}$ are preserved by the corresponding map, that is,
\begin{gather*}
(u+2^rj_0)^2+(u+2^rj_0)(w+2^rj_2)+(w+2^rj_2)^2 \equiv a \mod 2^{r+1}, \\
2(u+2^rj_0)(v+2^rj_1)+(u+2^rj_0)(z+2^rj_3) \notag \\
+(v+2^rj_1)(w+2^rj_2)+2(w+2^rj_2)(z+2^rj_3) \equiv b \mod 2^{r+1}, \\
(v+2^rj_1)^2+(v+2^rj_1)(z+2^rj_3)+(z+2^rj_3)^2 \equiv c \mod 2^{r+1},
\end{gather*}
which are equivalent to
\begin{align}
\label{ProofOfDabcIsoB2rC2rEqu1}
2^r(j_0w+j_2u) &\equiv a-(u^2+uw+w^2) \mod 2^{r+1}, \\
\label{ProofOfDabcIsoB2rC2rEqu2}
2^r(j_0z+j_1w+j_2v+j_3u) &\equiv b-(2uv+uz+vw+2wz) \mod 2^{r+1}, \\
\label{ProofOfDabcIsoB2rC2rEqu3}
2^r(j_1z+j_3v) &\equiv c-(v^2+vz+z^2) \mod 2^{r+1}.
\end{align}
Since the right-hand sides of the above three congruences are both congruent to $0$ or $2^r$ by induction hypothesis, $u$, $z$ (considered as in $\mathbb{Z}$)are divisible by $2$, and $v$, $w$ (considered as in $\mathbb{Z}$)are not divisible by $2$, we can sovle for $j_0$ and $j_3$ from \eqref{ProofOfDabcIsoB2rC2rEqu1} and \eqref{ProofOfDabcIsoB2rC2rEqu3}. Then we can solve for $j_1$ and $j_2$ from \eqref{ProofOfDabcIsoB2rC2rEqu2}. Namely, the desired $j_0,\,j_1,\,j_2,\,j_3$ exist, which completes the induction step, hence the whole proof.
\end{proof}

We now proceed to show that modules given in Definition \ref{deff:FQMABC} actually serve as cornerstones of all finite quadratic modules. This result can be found in, for example, \cite[Proposition 2.11]{Stromberg2013}. We give a new proof here.

First of all, we consider the primary decomposition.
\begin{lemm}
\label{lema:PriDecomOfFQM}
Let $\underline{M}=(M,Q)$ be a finite quadratic module. For a prime $p$, denote by $M(p)$ the set of all elements in $M$ whose order is a power of $p$. Then, $M(p)$, with the quadratic form inherited from $\underline{M}$, is itself a finite quadratic module. Moreover, we have an orthogonal decomposition $M=\bigoplus_p M(p)$ where all but finitely many $M(p)$'s are zero. This decomposition is unique: if $M=\bigoplus_p N_p$ where $N_p$ contains only elements whose order are  powers of $p$, then $N_p=M(p)$.
\end{lemm}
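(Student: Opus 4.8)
The plan is to prove this by imitating the classical primary decomposition of finite abelian groups, but keeping track of orthogonality. First I would recall that as an abelian group $M$ decomposes uniquely as $M = \bigoplus_p M(p)$, where $M(p)$ is the $p$-primary component (the set of elements killed by some power of $p$). This is standard. The content to be added is: (a) the restriction of $Q$ to $M(p)$ makes it a finite quadratic module (i.e.\ the induced bilinear form is nondegenerate); and (b) the decomposition is orthogonal, i.e.\ $B_Q(x,y)=0$ whenever $x \in M(p)$, $y \in M(q)$ with $p \neq q$.

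The key observation for (b) is a coprimality argument. Suppose $x$ has order $p^a$ and $y$ has order $q^b$ with $p \neq q$. Then $B_Q(x,y) \in \mathbb{Q}/\mathbb{Z}$ is annihilated by both $p^a$ and $q^b$ (since $B_Q$ is $\mathbb{Z}$-bilinear and each of $x,y$ is killed by the respective prime power), hence annihilated by $\gcd(p^a, q^b) = 1$, so $B_Q(x,y) = 0$. This gives $M(p) \perp M(q)$ for $p \neq q$, and since $M = \bigoplus_p M(p)$ as groups, this is an orthogonal direct sum decomposition in the sense of Section \ref{sec:Billinear map modules and quadratic map modules}. For (a), one can then invoke the fact recalled in that section that an orthogonal direct sum of finitely many bilinear (or quadratic) modules is (strongly) nondegenerate if and only if each summand is; since $\underline{M}$ is nondegenerate, so is each $M(p)$. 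Alternatively, one argues directly: if $x \in M(p)$ satisfies $B_Q(x,y)=0$ for all $y \in M(p)$, then since $B_Q(x,y)=0$ automatically for $y$ in every other $M(q)$, we get $B_Q(x,y)=0$ for all $y \in M$, whence $x=0$ by nondegeneracy of $\underline{M}$.

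For uniqueness, suppose $M = \bigoplus_p N_p$ with each $N_p$ consisting of elements of $p$-power order. Then each $N_p \subseteq M(p)$ by definition of $M(p)$, and comparing $\bigoplus_p N_p = M = \bigoplus_p M(p)$ forces $N_p = M(p)$ for every $p$ (e.g.\ by comparing orders, or by noting that $M(p)$ is exactly the set of elements of $M$ of $p$-power order and each element of $M$ has a unique expression as a sum of components from the $N_q$'s, whose $q$-power-order parts for $q \neq p$ must vanish). This is purely a statement about abelian groups and needs nothing about $Q$.

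I do not expect any serious obstacle here: the whole argument is elementary once the coprimality trick for (b) is in hand, which is really the only idea needed. The one point requiring a little care is making sure the ``induced quadratic map'' on $M(p)$ is literally a quadratic map in the sense of Section \ref{sec:Billinear map modules and quadratic map modules} — but this is immediate since $M(p)$ is a submodule and restriction of a quadratic map to a submodule is again a quadratic map with associated bilinear form the restriction of $B_Q$. The proof given by Boylan in \cite[Proposition 1.6]{Boylan2015}, which the authors cite as their model, proceeds along exactly these lines.
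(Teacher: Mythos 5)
Your proposal is correct and follows essentially the same route as the paper: the classical primary decomposition of finite abelian groups, the coprimality (B\'ezout) argument for orthogonality of $M(p)$ and $M(q)$, and nondegeneracy of each $M(p)$ deduced directly from nondegeneracy of $\underline{M}$ together with the orthogonality of the decomposition. The only small caveat is that Section \ref{sec:Billinear map modules and quadratic map modules} records only the implication ``all summands nondegenerate $\Rightarrow$ sum nondegenerate,'' not its converse, so your first justification of (a) is not literally available there; but your alternative direct argument is exactly what the paper uses, so nothing is missing.
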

\begin{proof}
Denote $B(x,y)=Q(x+y)-Q(x)-Q(y)$. It is obvious that $M(p)$ is a $\mathbb{Z}$-submodule of $M$, and by the primary decomposition theorem of finite abelian groups we have $M=\bigoplus_p M(p)$. For primes $p \neq q$, and $x\in M(p)$, $y\in M(q)$, suppose that $p^\alpha x = 0$ and $q^\beta y = 0$. Since $p^\alpha$ and $q^\beta$ are co-prime, there exist $a,b \in \mathbb{Z}$ such that $ap^\alpha+bq^\beta = 1$. Hence $B(x,y)=(ap^\alpha+bq^\beta)B(x,y)=B(ap^\alpha x,y)+B(x,bq^\beta y)=0$. That is, the direct sum $\bigoplus_p M(p)$ is orthogonal. The uniqueness also follows from the primary decomposition theorem of finite abelian groups. It remains to prove that all $M(p)$ are finite quadratic modules, i.e., the bilinear map restricted on each $M(p)$ is nondegenerate. This follows from the fact that $\underline{M}$ is nondegenerate and that the sum $\bigoplus_p M(p)$ is orthogonal.
\end{proof}

Secondly, we deal with $p$-primary finite quadratic modules, i.e., finite quadratic modules whose order is a positive power of $p$. The case $p=2$ and $p>2$ are very different, so we treat them separately in the following two lemmas.

\begin{lemm}
\label{lemm:pGroupDecom}
Any $p$-primary finite quadratic module where $p$ is an odd prime is isometrically isomorphic to an orthogonal direct sum of modules of the form \eqref{eq:DefAp}.
\end{lemm}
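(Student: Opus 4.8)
The plan is to argue by induction on the order of the $p$-primary module $\underline{M}=(M,Q)$, the base case being trivial. The main task is to split off one cyclic orthogonal summand of the form $\underline{A}_{p^r}^a$; once this is done, Lemma \ref{lemm:BasicLemmaOfStrongliNondeg} (together with Corollary \ref{coro:FQMStroNondeg}, which guarantees strong nondegeneracy) finishes the job, since the orthogonal complement is again a $p$-primary finite quadratic module of smaller order, to which the induction hypothesis applies. So everything reduces to the following claim: if $M\neq 0$, then $M$ contains an element $x$ whose cyclic span $\mathbb{Z}x$ is strongly nondegenerate and isometric to some $\underline{A}_{p^r}^a$.

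First I would pick $x\in M$ of maximal order, say $p^r$ with $p^r$ the exponent of $M$. Write $B(x,x)=c/p^s+\mathbb{Z}$ in lowest terms (with $p\nmid c$ when $s\ge 1$, and $s=0$ meaning $B(x,x)\in\mathbb{Z}$). The key sub-step is to show that, after possibly replacing $x$ by a better representative, one may take $s=r$, i.e. $B(x,x)$ has exact denominator $p^r$; equivalently $Q(x)=a/p^r+\mathbb{Z}$ with $p\nmid a$, because for $p$ odd the correspondence between $Q$ and $B$ is a bijection (division by $2$ is available), so $B(x,x)=2Q(x)$ and $p\nmid 2$. Granting this, the Gram matrix of $B$ restricted to $\mathbb{Z}x$ with respect to the single generator $x$ is the $1\times 1$ matrix $(2a/p^r)$, which has exact order $p^r$ in $\mathbb{Q}/\mathbb{Z}$; since $\mathbb{Z}x\cong\mathbb{Z}/p^r\mathbb{Z}$ this restriction is nondegenerate, hence strongly nondegenerate by Corollary \ref{coro:FQMStroNondeg} applied to the submodule, and by construction $(\mathbb{Z}x, Q|_{\mathbb{Z}x})\cong \underline{A}_{p^r}^a$.

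To establish the key sub-step, suppose for contradiction that $B(y,y)$ has denominator strictly smaller than the order of $y$ for \emph{every} $y$ of maximal order $p^r$ in $M$; more generally one shows that if this held on a generating set one could contradict nondegeneracy. Concretely: let $x$ have order $p^r$; if $Q(x)=a/p^s+\mathbb{Z}$ with $s<r$, then for any $y\in M$ the value $B(x,y)$ is a $p^{r}$-torsion element of $\mathbb{Q}/\mathbb{Z}$, but I would like to show it is forced into $p^{s'}\mathbb{Z}/\mathbb{Z}$ for some $s'<r$, contradicting that $x$ (having order $p^r$) must pair nontrivially in the ``top layer''. The cleanest route is: among all elements of order $p^r$, choose $x$ with $Q(x)$ of maximal denominator $p^s$; using relation \eqref{eq:RelationBetQandBQ}, $Q(x+y)=Q(x)+Q(y)+B(x,y)$, and a $p$-adic valuation count on denominators, show that if $s<r$ then replacing $x$ by $x+py$ or by $x+y$ for suitable $y$ strictly increases the denominator of $Q$ while keeping the order $p^r$ — unless no such $y$ exists, in which case $B(x,\,\cdot\,)$ kills all of $M(p)$'s top layer and one derives $x\in M^\perp=0$, a contradiction. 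I expect this valuation bookkeeping — pinning down exactly why maximal denominator of $Q$ on the order-$p^r$ elements must equal $p^r$ — to be the main obstacle; the rest is a routine application of the splitting lemma and induction. The oddness of $p$ enters precisely here, both through the $Q\leftrightarrow B$ bijection and through the fact that $(\mathbb{Z}/p^r\mathbb{Z})^\times$ has a clean square-class structure, which is why the $p=2$ case must be handled separately in Lemma \ref{lemm:2GroupDecom}.
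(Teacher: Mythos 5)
Your overall architecture is the same as the paper's: split off a cyclic submodule on which the restricted form is nondegenerate, use Corollary \ref{coro:FQMStroNondeg} and Lemma \ref{lemm:BasicLemmaOfStrongliNondeg} to obtain an orthogonal complement that is again a $p$-primary finite quadratic module, and induct. The problem is that the crux is left open, exactly where you say you expect the "main obstacle": you never prove the existence of an element $x$ of order $p^r$ with $Q(x)$ of exact denominator $p^r$. The replacement sketch ("replace $x$ by $x+py$ or $x+y$ for suitable $y$") does not identify the suitable $y$, does not verify that the order stays $p^r$ or that the denominator strictly increases, and the fallback case is misstated: if no useful $y$ exists, what you obtain is $p^{r-1}x\perp M$ (an order-$p$ element in the radical), not $x\in M^\perp$. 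Since this is the only place where nondegeneracy and the hypothesis $p>2$ actually do work, the lemma is not yet proved as written.

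The gap is genuine but short to close, and closing it lands you essentially on the paper's argument. Pick $x_0$ of order $p^r$ (the exponent of $M$). Since $p^{r-1}x_0\neq 0$ and $B$ is nondegenerate, there is $y$ with $p^{r-1}B(x_0,y)\neq 0$, i.e. $B(x_0,y)$ has denominator exactly $p^r$. For $p$ odd, $2Q(z)=B(z,z)$ is annihilated by the order of $z$, so every $Q$-value has denominator dividing $p^r$ (this is the bound $\beta_{i,j}\le\max(r_i,r_j)$ in the paper's proof). From $B(x_0,y)=Q(x_0+y)-Q(x_0)-Q(y)$ it follows that at least one of $Q(x_0)$, $Q(y)$, $Q(x_0+y)$ has denominator exactly $p^r$, and any element whose $Q$-value has denominator $p^r$ automatically has order $p^r$; that element generates a copy of $\underline{A}_{p^r}^a$ on which the form is strongly nondegenerate. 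The paper carries out the same idea in coordinates: it fixes a cyclic decomposition with generators $\mathbf{1}_j$, records the denominators $p^{\beta_{i,j}}$ of $Q(\mathbf{1}_j)$ and $Q(\mathbf{1}_i+\mathbf{1}_j)$, chooses a pair $(i_0,j_0)$ minimizing $\max(r_{i_0},r_{j_0})-\beta_{i_0,j_0}$, shows this minimum is $0$ by nondegeneracy (otherwise $p^{r_1-1}\mathbf{1}_1\perp M$), and splits off the subgroup generated by $\mathbf{1}_{j_0}$ or $\mathbf{1}_{i_0}+\mathbf{1}_{j_0}$, inducting on the number of cyclic factors rather than on $|M|$ --- an immaterial difference from your induction on the order.
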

\begin{proof}
Let $\underline{M}=(M,Q)$ be a $p$-primary finite quadratic module and $B$ the associated bilinear map $B(x,y)=Q(x+y)-Q(x)-Q(y)$. By the theorem of cyclic decompositions of $p$-primary finite abelian groups, without loss of generality, we can assume that $M=\mathbb{Z}/p^{r_1}\mathbb{Z} \times \dots \times \mathbb{Z}/p^{r_t}\mathbb{Z}$ with $r_1 \geq r_2 \geq \dots r_t \geq 1$. For $1 \leq j \leq t$, set $\mathbf{1}_j=(\dots, 1+p^{r_j}\mathbb{Z}, \dots)$, where the $j^\prime$-th coordinate equals $0+p^{r_{j^\prime}}\mathbb{Z}$ if $1 \leq j^\prime \leq t$ and $j^\prime \neq j$. It can be verified that $Q(\mathbf{1}_j)=\frac{a_{j,j}}{p^{\beta_{j,j}}}+\mathbb{Z}$ and $Q(\mathbf{1}_i + \mathbf{1}_j)=\frac{a_{i,j}}{p^{\beta_{i,j}}}+\mathbb{Z}$ with $a_{i,j}$ integers not divisible by $p$ and $\beta_{i,j}$ nonnegative integers not exceeding $\max(r_i,r_j)$.

We now proceed by induction on $t$. The base case $t=1$ is clear: since $\underline{M}$ is nondegenerate we have $\beta_{1,1}=r_1$; hence $\underline{M}$ is isomorphic to $\underline{A}_{p^{r_1}}^{a_{1,1}}$. Assume that the assertion holds for the case $t=t_0$; we shall prove it for the case $t=t_0+1$. Choose a pair $(i_0,j_0)$ such that the quantity $\max(r_{i_0},r_{j_0})-\beta_{i_0,j_0}$ is the least among all  quantities $\max(r_{i},r_{j})-\beta_{i,j}$. Then we have $\max(r_{i_0},r_{j_0})-\beta_{i_0,j_0}=0$ since the negation $\max(r_{i_0},r_{j_0})-\beta_{i_0,j_0}>0$ contradicts the assumption that $\underline{M}$ is nondegenerate (for example, the element $p^{r_1-1}\mathbf{1}_1$ is orthogonal to the whole $M$ in this case.). Denote by $G$ the subgroup generated by $\mathbf{1}_{j_0}$ (in the case $i_0=j_0$) or by $\mathbf{1}_{i_0}+\mathbf{1}_{j_0}$ (in the case $i_0 > j_0$), both of which are isometrically isomorphic to $\underline{A}_{p^{r_{j_0}}}^{a_{i_0,j_0}}$. Apply Lemma \ref{lemm:BasicLemmaOfStrongliNondeg}(1) and Corollary \ref{coro:FQMStroNondeg}; we have an orthogonal direct sum $M=G \oplus G^\perp$. The $\mathbb{Z}$-module $G^\perp$ is again a finite quadratic module by Lemma \ref{lemm:BasicLemmaOfStrongliNondeg}(2). Moreover any decomposition of $G^\perp$ into cyclic groups has $t_0$ summands by elementary theory of finite abelian groups. Hence we conclude the proof by applying the induction hypothesis to $G^\perp$.
\end{proof}

\begin{lemm}
\label{lemm:2GroupDecom}
Any $2$-primary finite quadratic module is isometrically isomorphic to an orthogonal direct sum of modules of the form \eqref{eq:DefA2}, \eqref{eq:DefB2}, or \eqref{eq:DefC2}.
\end{lemm}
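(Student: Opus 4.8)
The plan is to imitate the inductive strategy used for odd primes in Lemma \ref{lemm:pGroupDecom}, but to split off, at each step, either a rank-one summand of type \eqref{eq:DefA2} or a rank-two summand of type \eqref{eq:DefB2} or \eqref{eq:DefC2}, using Lemma \ref{lemm:BasicLemmaOfStrongliNondeg}(1) to detach it as an orthogonal direct summand and Lemma \ref{lemm:BasicLemmaOfStrongliNondeg}(2) together with Corollary \ref{coro:FQMStroNondeg} to see that the orthogonal complement is again a finite quadratic module, to which the induction hypothesis applies. First I would write $M=\mathbb{Z}/2^{r_1}\mathbb{Z}\times\dots\times\mathbb{Z}/2^{r_t}\mathbb{Z}$ with $r_1\ge r_2\ge\dots\ge r_t\ge 1$, introduce the generators $\mathbf{1}_j$ as in the previous lemma, and record the values $Q(\mathbf{1}_j)$ and $B(\mathbf{1}_i,\mathbf{1}_j)$; writing $B(\mathbf{1}_i,\mathbf{1}_j)=\frac{b_{i,j}}{2^{\beta_{i,j}}}+\mathbb{Z}$ and $Q(\mathbf{1}_j)=\frac{a_j}{2^{\gamma_j}}+\mathbb{Z}$, the key quantity to watch is, as before, $\max(r_i,r_j)-\beta_{i,j}$, and nondegeneracy forces its minimum over all pairs to be $0$ (else $2^{r_1-1}\mathbf{1}_1\in M^\perp$).

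The heart of the argument is the case analysis at the chosen pair $(i_0,j_0)$ realizing this minimum. If $i_0=j_0$ and moreover $Q(\mathbf{1}_{j_0})$ has denominator exactly $2^{r_{j_0}+1}$, then the cyclic subgroup $G=\langle\mathbf{1}_{j_0}\rangle$ is isometric to some $\underline{A}_{2^{r_{j_0}}}^{a}$ and the restriction of $B$ to it is strongly nondegenerate, so Lemma \ref{lemm:BasicLemmaOfStrongliNondeg} peels it off. The problematic situation is when no such rank-one summand is available at the top level $r_1$: then $Q$ vanishes to higher $2$-adic order on all the relevant generators while the bilinear form $B$ is still ``as nondegenerate as possible'' on a rank-two piece, i.e.\ there are indices $i_0>j_0$ with $r_{i_0}=r_{j_0}=:r$ and $B(\mathbf{1}_{i_0},\mathbf{1}_{j_0})$ of denominator exactly $2^r$. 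In that case I would consider the rank-two subgroup $G$ generated by $\mathbf{1}_{i_0}$ and $\mathbf{1}_{j_0}$ (after possibly adding a suitable multiple of another generator to make $r_{i_0}=r_{j_0}$); its Gram data exhibit it as a module $\underline{D}_{2^r}^{a,b,c}$ with $b$ odd, and Lemma \ref{lemm:DabcIsoToBorC} identifies it with $\underline{B}_{2^r}$ or $\underline{C}_{2^r}$. One checks the restriction of $B$ to $G$ is strongly nondegenerate (equivalently, the corresponding $2\times 2$ Gram matrix over $\mathbb{Z}/2^r\mathbb{Z}$, suitably normalized, is invertible — here $b$ odd is exactly what is needed), so again Lemma \ref{lemm:BasicLemmaOfStrongliNondeg}(1) gives $M=G\oplus G^\perp$, and any cyclic decomposition of $G^\perp$ has $t-2$ summands, so the induction hypothesis finishes.

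The main obstacle I expect is bookkeeping in the second case: showing that when no rank-one summand can be split off at the maximal level, one genuinely can arrange a rank-two orthogonal summand of the correct form. Concretely, one must show that if every $Q(\mathbf{1}_j)$ with $r_j$ maximal has denominator strictly smaller than $2^{r_j+1}$ (i.e.\ the ``diagonal'' is $2$-adically deficient) but $B$ is nondegenerate, then some off-diagonal entry $B(\mathbf{1}_i,\mathbf{1}_j)$ attains denominator $2^{\max(r_i,r_j)}$ with $r_i=r_j$, and the resulting $2\times 2$ block is strongly nondegenerate; the needed linear-algebra-over-$\mathbb{Z}/2^r\mathbb{Z}$ manipulations (replacing a generator by a unit combination of generators to equalize the $r_j$'s and to clear unwanted cross terms without disturbing nondegeneracy) are routine but must be done with care, and this is where the technical weight of the proof sits. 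Once the summand is isolated, everything else is a direct appeal to the cited lemmas and the induction hypothesis.
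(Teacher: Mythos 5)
Your proposal is correct and follows essentially the same route as the paper: strong induction on the number of cyclic summands, splitting off either a rank-one module $\underline{A}_{2^{r}}^{a}$ or a rank-two block identified with $\underline{B}_{2^r}$ or $\underline{C}_{2^r}$ via Lemma \ref{lemm:DabcIsoToBorC}, with Lemma \ref{lemm:BasicLemmaOfStrongliNondeg} and Corollary \ref{coro:FQMStroNondeg} used to detach the summand and keep the complement a finite quadratic module. The only difference is bookkeeping: the paper tracks denominators of the values $Q(\mathbf{1}_j)$ and $Q(\mathbf{1}_i+\mathbf{1}_j)$ (so its rank-one case may split off the cyclic group generated by $\mathbf{1}_{i_0}+\mathbf{1}_{j_0}$), whereas you track denominators of $B(\mathbf{1}_i,\mathbf{1}_j)$; both case analyses close in the same way, and your hedge about arranging $r_{i_0}=r_{j_0}$ is unnecessary, since $B(\mathbf{1}_{i_0},\mathbf{1}_{j_0})$ is annihilated by $2^{\min(r_{i_0},r_{j_0})}$, so attaining denominator $2^{\max(r_{i_0},r_{j_0})}$ already forces $r_{i_0}=r_{j_0}$.
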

\begin{proof}
Similarly as in the proof of Lemma \ref{lemm:pGroupDecom}, let $\underline{M}=(M,Q)$ be a $2$-primary finite quadratic module and $B$ the associated bilinear map. Assume that $M=\mathbb{Z}/2^{r_1}\mathbb{Z} \times \dots \times \mathbb{Z}/2^{r_t}\mathbb{Z}$ with $r_1 \geq r_2 \geq \dots r_t \geq 1$ without loss of generality. Again, for $1 \leq j \leq t$, set $\mathbf{1}_j=(\dots, 1+2^{r_j}\mathbb{Z}, \dots)$, where the $j^\prime$-th coordinate equals $0+2^{r_{j^\prime}}\mathbb{Z}$ if $1 \leq j^\prime \leq t$ and $j^\prime \neq j$. At this point, something different from the case $p>2$ happens: we can verify that $Q(\mathbf{1}_j)=\frac{a_{j,j}}{2^{\beta_{j,j}}}+\mathbb{Z}$ and $Q(\mathbf{1}_i + \mathbf{1}_j)=\frac{a_{i,j}}{2^{\beta_{i,j}}}+\mathbb{Z}$ with $a_{i,j}$ integers not divisible by $2$ and $\beta_{i,j}$ nonnegative integers not exceeding $\max(r_i,r_j)+1$ instead of the quantity $\max(r_i,r_j)$ in the case $p>2$.

Proceed by strong induction on $t$. The base case $t=1$ is clear as in the proof of Lemma \ref{lemm:pGroupDecom}: $\underline{M}$ must be isometrically isomorphic to $\underline{A}_{2^{r_1}}^{a_{1,1}}$. Assume that the statement is true for the case $t<t_0$; we aim to prove it for the case $t=t_0$. Choose a pair $(i_0,j_0)$ such that the quantity $\max(r_{i_0},r_{j_0})+1-\beta_{i_0,j_0}$ is the least among all  quantities $\max(r_{i},r_{j})+1-\beta_{i,j}$. There are exactly two cases $\max(r_{i_0},r_{j_0})+1-\beta_{i_0,j_0} = 0$ or $1$ to consider, since other cases contradict the assumption that $\underline{M}$ is nondegenerate (for example, the element $2^{r_1-1}\mathbf{1}_1$ is orthogonal to the whole $M$ in these cases.). We will prove in the next paragraph that in both cases we can find a submodule $G$ isometrically isomorphic to one of the modules $\underline{A}_{2^r}^a$, or $\underline{B}_{2^r}$, or $\underline{C}_{2^r}$. The rest of the proof is almost the same as the last few sentences of the proof of Lemma \ref{lemm:pGroupDecom}.

For the case $\max(r_{i_0},r_{j_0})+1-\beta_{i_0,j_0} = 0$, we proceed similarly as in the proof of Lemma \ref{lemm:pGroupDecom}: define $G$ as the subgroup generated by $\mathbf{1}_{j_0}$ (in the case $i_0=j_0$) or by by $\mathbf{1}_{i_0}+\mathbf{1}_{j_0}$ (in the case $i_0 > j_0$), each of which is isometrically isomorphic to $\underline{A}_{2^{r_{j_0}}}^{a_{i_0,j_0}}$. For another case $\max(r_{i_0},r_{j_0})+1-\beta_{i_0,j_0} = 1$, we have $\beta_{i,j} \leq \max(r_{i},r_{j})$ for any $i,j$. Thus $B(2^{r_1-1}\mathbf{1}_1, \mathbf{1}_1)=0+\mathbb{Z}$. Howerver, since $B$ is nondegenerate, there exists a $j_1 \neq 1$ such that $B(2^{r_1-1}\mathbf{1}_1, \mathbf{1}_{j_1}) \neq 0+\mathbb{Z}$. Define $G$ as the subgroup of $M$ generated by $\mathbf{1}_1$ and $\mathbf{1}_{j_1}$. We assert that $G$ is isometrically isomorphic to $\underline{B}_{2^{r_1}}$ or $\underline{C}_{2^{r_1}}$. First of all, we show that $r_{j_1}=r_1$. Since $B(\mathbf{1}_1, \mathbf{1}_{j_1})=Q(\mathbf{1}_1 + \mathbf{1}_{j_1})-Q(\mathbf{1}_1)-Q(\mathbf{1}_{j_1})$, we have $B(\mathbf{1}_1, \mathbf{1}_{j_1}) = \frac{b}{2^r}+\mathbb{Z}$ for some $r \leq r_1$ and some odd integer $b$. However, $B(2^{r_1-1}\mathbf{1}_1, \mathbf{1}_{j_1}) \neq 0+\mathbb{Z}$ implies that $r=r_1$. Moreover, we have $Q(\mathbf{1}_1)= \frac{a}{2^{r_1^\prime}}+\mathbb{Z}$ and $Q(\mathbf{1}_{j_1})= \frac{c}{2^{r_{j_1}^\prime}}+\mathbb{Z}$ for some $r_1^\prime \leq r_1$, $r_{j_1}^\prime \leq r_{j_1} \leq r_1$ and some odd integers $a$ and $c$. A direct calculation shows that
\begin{multline}
\label{eq:valueBInProofOf2Primary}
B(x_1\mathbf{1}_1+y_1\mathbf{1}_{j_1}, x_2\mathbf{1}_1+y_2\mathbf{1}_{j_1}) \\
 = \frac{2^{r_1-r_1^\prime+1}ax_1x_2+b(x_1y_2+y_1x_2)+2^{r_1-r_{j_1}^\prime+1}cy_1y_2}{2^{r_1}}+\mathbb{Z}
\end{multline}
for integers $x_1$, $y_1$, $x_2$, $y_2$. We now seek for integers $x_1$ and $y_1$ such that $x_1\mathbf{1}_1+y_1\mathbf{1}_{j_1} \perp G$, i.e., $B(x_1\mathbf{1}_1+y_1\mathbf{1}_{j_1},\mathbf{1}_1)=0$ and $B(x_1\mathbf{1}_1+y_1\mathbf{1}_{j_1},\mathbf{1}_{j_1})=0$. Let $x_2=1$ and $y_2=0$ in \eqref{eq:valueBInProofOf2Primary}; then $B(x_1\mathbf{1}_1+y_1\mathbf{1}_{j_1},\mathbf{1}_1)=0$ if and only if $2^{r_1} \mid 2^{r_1-r_1^\prime+1}ax_1+by_1$. Similarly, let $x_2=0$ and $y_2=1$; then $B(x_1\mathbf{1}_1+y_1\mathbf{1}_{j_1},\mathbf{1}_{j_1})=0$ if and only if $2^{r_1} \mid bx_1+2^{r_1-r_{j_1}^\prime+1}cy_1$. Hence, $x_1\mathbf{1}_1+y_1\mathbf{1}_{j_1} \perp G$ if and only if $2^{r_1} \mid \left(u2^{r_1-r_1^\prime+1}a+vb\right)x_1+\left(ub+v2^{r_1-r_{j_1}^\prime+1}c\right)y_1$ for any $u,\,v \in \mathbb{Z}$. Since the determinant of the matrix
\begin{equation*}
\begin{pmatrix}
2^{r_1-r_1^\prime+1}a & b \\
b & 2^{r_1-r_{j_1}^\prime+1}c
\end{pmatrix}
\end{equation*}
is odd, we obtain that $x_1\mathbf{1}_1+y_1\mathbf{1}_{j_1} \perp G$ if and only if $x_1$ and $y_1$ are both multiples of $2^{r_1}$. Consequently, we have $r_{j_1}=r_1$. Secondly, we can prove that $G$, with the quadratic map inherited from $\underline{M}$, is a finite quadratic module. This also follows from the equivalence just proved. Finally, we have $G$ is isometrically isomorphic to $\underline{B}_{2^{r_1}}$ or $\underline{C}_{2^{r_1}}$, which follows from Lemma \ref{lemm:DabcIsoToBorC} and the fact that $G$ is isometrically isomorphic to $\underline{D}_{2^{r_1}}^{2^{r_1-r_1^\prime}a,b,2^{r_1-r_{j_1}^\prime}c}$.
\end{proof}

Now we can state the structure theorem of finite quadratic modules:
\begin{thm}
\label{thm:JordanDecompositionOfFQM}
Every finite quadratic module can be decomposed as an orthogonal direct sum of indecomposable modules defined in Definition \ref{deff:FQMABC}.
\end{thm}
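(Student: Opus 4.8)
The plan is simply to assemble the three structural lemmas already established. Given a finite quadratic module $\underline{M}=(M,Q)$, I would first apply Lemma \ref{lema:PriDecomOfFQM} to obtain the orthogonal decomposition $\underline{M}=\bigoplus_p M(p)$ into primary components, where all but finitely many summands are zero and each $M(p)$, with the inherited quadratic map, is again a finite quadratic module. Next, for each odd prime $p$ with $M(p)\neq 0$, Lemma \ref{lemm:pGroupDecom} provides an isometric isomorphism of $M(p)$ onto an orthogonal direct sum of modules of the form $\underline{A}_{p^r}^a$ as in \eqref{eq:DefAp}; and for the prime $2$, Lemma \ref{lemm:2GroupDecom} provides an isometric isomorphism of $M(2)$ onto an orthogonal direct sum of modules of the form $\underline{A}_{2^r}^a$, $\underline{B}_{2^r}$, or $\underline{C}_{2^r}$ as in \eqref{eq:DefA2}, \eqref{eq:DefB2}, \eqref{eq:DefC2}. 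Substituting these isometric isomorphisms into the primary decomposition, and using that an orthogonal direct sum of orthogonal direct sums is again an orthogonal direct sum (the construction reviewed in Section \ref{sec:Billinear map modules and quadratic map modules} is associative), yields an isometric isomorphism of $\underline{M}$ onto an orthogonal direct sum of modules taken from Definition \ref{deff:FQMABC}.

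It then only remains to observe that each of $\underline{A}_{p^r}^a$ (for $p>2$), $\underline{A}_{2^r}^a$, $\underline{B}_{2^r}$, and $\underline{C}_{2^r}$ is genuinely indecomposable, i.e.\ admits no expression as an orthogonal direct sum of two nonzero submodules. For the modules $\underline{A}_{p^r}^a$ and $\underline{A}_{2^r}^a$ this is immediate, since their underlying group $\mathbb{Z}/p^r\mathbb{Z}$ is indecomposable as an abelian group; for $\underline{B}_{2^r}$ and $\underline{C}_{2^r}$ it is the assertion recorded in Remark \ref{rema:BasicPropOfSimpleFQM}. Combining the decomposition of the previous paragraph with this indecomposability remark completes the proof.

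As all the substantive work is carried out in Lemma \ref{lema:PriDecomOfFQM}, Lemma \ref{lemm:pGroupDecom}, and Lemma \ref{lemm:2GroupDecom} (the last of which in turn rests on Lemma \ref{lemm:DabcIsoToBorC}), there is no serious obstacle remaining at this stage. The only points requiring a line of care are the bookkeeping of nested orthogonal direct sums under the chain of isometric isomorphisms, and the indecomposability statement quoted from Remark \ref{rema:BasicPropOfSimpleFQM}; both are routine.
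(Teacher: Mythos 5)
Your proposal is correct and matches the paper's own proof, which likewise deduces the theorem immediately from Lemma \ref{lema:PriDecomOfFQM}, Lemma \ref{lemm:pGroupDecom}, and Lemma \ref{lemm:2GroupDecom}, with the indecomposability of the summands recorded in Remark \ref{rema:BasicPropOfSimpleFQM}. The extra remarks on assembling the nested orthogonal sums are routine and consistent with the paper's intent.
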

\begin{proof}
This follows immediately from Lemma \ref{lema:PriDecomOfFQM} , Lemma \ref{lemm:pGroupDecom}, and Lemma \ref{lemm:2GroupDecom}.
\end{proof}
Note that the decomposition of the zero module is an empty sum by convention.

\section{Discriminant modules of even lattices}
\label{sec:Discriminant modules of even lattices}

In the last section, we illustrated that we have a finite quadratic module associated to any even integral lattice---the discriminant module of the given lattice. We now prove the converse: every finte quadratic module is isometrically isomorphic to the discriminant module of an even integral lattice, which is important for our later paper on Jacobi forms of lattice index. This result was first discovered and proved in \cite[Theorem 6]{Wall1963}. We give a proof here by presenting a concrete lattice represented by its Gram matrix for any of the modules in Definition \ref{deff:FQMABC}. In this section, we give an even lattice of the least rank, while in the next section, we give a positive definite even lattice of the least rank, for each of these finite quadratic modules.

We begin with modules $\underline{A}_{p^r}^a$ (See Definition \ref{deff:FQMABC}.).
\begin{lemm}
\label{lemm:HowToFindEvenLatticeForApr}
Let $p$ be an odd prime, $r$ be a positive integer, and $a \in \mathbb{Z}$ with $(p,a)=1$. Let $\underline{L}=(L,B)$ be an (nondegenerate) even lattice of rank $n \in \mathbb{Z}_{\geq 1}$. Then the modules $D_{\underline{L}}$ (See Example \ref{examp:DiscModule} for definition) and $\underline{A}_{p^r}^{a}$ are isometrically isomorphic if and only if there exists a basis of $L$ such that the Gram matrix of $\underline{L}$ with respect to this basis is of the form $\mathop{\mathrm{diag}}\left(p^r, 1, \dots, 1\right) \cdot S$, where $S=(s_{ij})_{1 \leq i,j \leq n}$ is an invertible integral matrix (i.e., $S \in GL_n(\mathbb{Z})$) satisfying
\begin{enumerate}
\item $s_{11},\dots,s_{nn} \in 2\mathbb{Z}$,
\item The Legendre symbol $\legendre{s_{11}/2}{p}=\legendre{a}{p}$,
\item $p^rs_{1j}=s_{j1}$ for $2\leq j\leq n$ and $s_{ij}=s_{ji}$ for $2 \leq i,j \leq n$.
\end{enumerate}
\end{lemm}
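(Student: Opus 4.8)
The plan is to prove both directions by translating everything into the language of Gram matrices and dual lattices via Proposition \ref{propBasisPropOnDualLattice}. For the ``if'' direction, suppose $G_B = \mathop{\mathrm{diag}}(p^r,1,\dots,1)\cdot S$ with $S\in\mathrm{GL}_n(\mathbb{Z})$ satisfying (1)--(3). Condition (3) is precisely what makes $G_B$ symmetric (one checks $G_B^{\mathrm{T}}=G_B$ entrywise), condition (1) makes the lattice even (the diagonal entries of $G_B$ are $p^r s_{11}, s_{22},\dots,s_{nn}$, all in $2\mathbb{Z}$), and $\det G_B = \pm p^r$, so $|L^\sharp/L| = p^r$ and $L^\sharp/L$ is cyclic of order $p^r$ — hence as a group $D_{\underline L}\cong\mathbb{Z}/p^r\mathbb{Z}$. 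The dual basis is $(e_1^\ast,\dots,e_n^\ast)^{\mathrm T} = G_B^{-1}(e_1,\dots,e_n)^{\mathrm T} = S^{-1}\mathop{\mathrm{diag}}(p^{-r},1,\dots,1)(e_1,\dots,e_n)^{\mathrm T}$; since $S^{-1}\in\mathrm{GL}_n(\mathbb{Z})$, the lattice $L^\sharp$ is spanned over $\mathbb{Z}$ by $p^{-r}e_1, e_2,\dots,e_n$ together with $L$, so $L^\sharp/L$ is generated by the class of $p^{-r}e_1$. I then compute the quadratic form: $Q(p^{-r}e_1) = \tfrac12 B(p^{-r}e_1,p^{-r}e_1) = \tfrac12 p^{-2r}(p^r s_{11}) = \tfrac{s_{11}/2}{p^r}$, so $D_{\underline L}\cong \underline A_{p^r}^{s_{11}/2}$, and by Remark \ref{rema:SimpleModIso} this is isomorphic to $\underline A_{p^r}^a$ exactly when $\legendre{s_{11}/2}{p}=\legendre{a}{p}$, which is condition (2).

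For the ``only if'' direction, suppose $D_{\underline L}\cong\underline A_{p^r}^a$. Then $L^\sharp/L\cong\mathbb{Z}/p^r\mathbb{Z}$, so $|\det G_B|=p^r$ for any basis. The key structural input is the theory of Smith normal form / elementary divisors applied to the inclusion $L\subseteq L^\sharp$: since $L^\sharp/L$ is cyclic of order $p^r$, there is a basis $(f_1,\dots,f_n)$ of $L^\sharp$ such that $(p^r f_1, f_2,\dots,f_n)$ is a basis of $L$. Set $e_1 = p^r f_1$, $e_i=f_i$ for $i\ge2$; this is a $\mathbb{Z}$-basis of $L$, and $f_1 = p^{-r}e_1$ generates $L^\sharp/L$. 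Writing $G$ for the Gram matrix of $\underline L$ in the basis $(e_1,\dots,e_n)$, I factor out the scaling: the Gram matrix of $B$ on the basis $(f_1,e_2,\dots,e_n)$ of $L^\sharp$ is $\mathop{\mathrm{diag}}(p^{-r},1,\dots,1)\cdot G$, and I need to show $G = \mathop{\mathrm{diag}}(p^r,1,\dots,1)\cdot S$ for a suitable integral invertible $S$. Define $S := \mathop{\mathrm{diag}}(p^{-r},1,\dots,1)\cdot G$; then $S$ has entries $s_{11}=p^{-r}g_{11}$, $s_{1j}=p^{-r}g_{1j}$ for $j\ge2$, $s_{ij}=g_{ij}$ for $i\ge2$. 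Integrality of $S$ amounts to $p^r\mid g_{11}$ and $p^r\mid g_{1j}$, i.e. $B(f_1,f_1)\in\mathbb{Z}$ and $B(f_1,e_j)\in\mathbb{Z}$; but $B(f_1,e_j)=B(p^{-r}e_1,e_j)$ and since $f_1\in L^\sharp$ and $e_j\in L$ this lies in $\mathbb{Z}$ by definition of the dual lattice — and $B(f_1,f_1) = Q$-value computation shows it is $2\cdot(\text{value of }Q)$ which, since $Q(f_1)+\mathbb{Z}$ has denominator dividing $p^r$ and $p$ is odd, forces $B(f_1,f_1)=2Q(f_1)\in p^{-r}\mathbb{Z}$... here I must be slightly careful: I actually get $s_{11} = 2Q(f_1) = 2\cdot\tfrac{a'}{p^r}$ for the integer $a'$ with $D_{\underline L}\cong\underline A_{p^r}^{a'}$ and $\legendre{a'}{p}=\legendre ap$, so $s_{11}=2a'/p^r$ — wait, that is not obviously integral. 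The fix is that $Q(f_1)=\tfrac{a'}{p^r}+\mathbb{Z}$ only determines $B(f_1,f_1)$ mod $2\mathbb{Z}$; choosing the representative correctly (and using that $L$ is even so that adding elements of $L$ to $f_1$ changes $Q(f_1)$ by an integer, allowing adjustment of $e_1$ by a multiple of $p^r f_1\in$... ) — more cleanly, replace $f_1$ by $f_1 + (\text{element of }L)$ if needed; I'll arrange $B(f_1,f_1)\in 2\mathbb{Z}\cdot p^{-r}\cdot(\text{unit})$. Then invertibility of $S$ follows since $\det S = p^{-r}\det G = \pm1$. Symmetry of $S$ (condition (3)) follows from symmetry of $G$: $s_{j1}=g_{j1}=g_{1j}=p^r s_{1j}$ and $s_{ij}=g_{ij}=g_{ji}=s_{ji}$ for $i,j\ge2$. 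Evenness of $L$ gives $g_{ii}\in2\mathbb{Z}$ for all $i$; for $i\ge2$ this is $s_{ii}\in2\mathbb{Z}$, and for $i=1$, $p^r s_{11}=g_{11}\in2\mathbb{Z}$ with $p$ odd forces $s_{11}\in2\mathbb{Z}$, giving condition (1). Finally $Q(f_1)=\tfrac12 s_{11}/p^r$ exhibits $D_{\underline L}\cong\underline A_{p^r}^{s_{11}/2}$, and comparing with $\underline A_{p^r}^a$ via Remark \ref{rema:SimpleModIso} yields condition (2).

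The main obstacle I anticipate is the bookkeeping in the ``only if'' direction around the diagonal entry $s_{11}$: the discriminant module only sees $Q(f_1)$ modulo $\mathbb{Z}$, whereas the Gram matrix needs an honest rational number $B(f_1,f_1)=2Q(f_1)$, and one must check this automatically has the right $p$-adic valuation (exactly $-r$, not larger) and that evenness of $\underline L$ plus freedom to modify the generator $f_1$ by lattice vectors lets us pin down $s_{11}\in2\mathbb{Z}$ with $\legendre{s_{11}/2}{p}=\legendre ap$. Everything else — symmetry, the determinant/order count, integrality of off-diagonal entries from the dual-lattice definition, and the existence of the elementary-divisor basis — is routine once Proposition \ref{propBasisPropOnDualLattice} and the structure theory of finitely generated abelian groups are in hand.
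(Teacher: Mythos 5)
Your architecture is the same as the paper's: both directions run through Proposition \ref{propBasisPropOnDualLattice}, a basis of $L$ for which $(\frac{1}{p^r}e_1,e_2,\dots,e_n)$ is a basis of $L^\sharp$, the factorization $G=\mathop{\mathrm{diag}}(p^r,1,\dots,1)\cdot S$, the computation $Q(x_1\frac{1}{p^r}e_1+\dots+x_ne_n+L)=\frac{s_{11}/2}{p^r}x_1^2+\mathbb{Z}$, and Remark \ref{rema:SimpleModIso}; your ``if'' direction is correct as written. The one genuine flaw sits exactly where you flag the ``main obstacle'', and it is a bookkeeping slip rather than a real obstacle. With $e_1=p^rf_1$ you have $g_{11}=B(e_1,e_1)=p^{2r}B(f_1,f_1)$, so integrality of $s_{11}=p^{-r}g_{11}$ amounts to $p^rB(f_1,f_1)\in\mathbb{Z}$, \emph{not} to $B(f_1,f_1)\in\mathbb{Z}$ as you wrote; the latter is false in general (for the rank-$2$ lattice \eqref{eq:GramMatrixNondegLatticeOfApr1} one has $B(f_1,f_1)=2/p^r$). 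And $p^rB(f_1,f_1)=B(f_1,e_1)\in\mathbb{Z}$ holds automatically because $f_1\in L^\sharp$ and $e_1\in L$ --- the very same dual-lattice observation you used for the off-diagonal entries $s_{1j}=B(f_1,e_j)$. Consequently the digression about replacing $f_1$ by $f_1$ plus a lattice vector and ``arranging'' $B(f_1,f_1)\in 2\mathbb{Z}\cdot p^{-r}\cdot(\text{unit})$ is both unnecessary and, as written, not an argument; it should be deleted and replaced by the one-line computation above. The remaining worries also evaporate: $\det S=\pm 1$ is immediate from $\det G=\pm p^r$, evenness of $s_{11}$ follows from $p^rs_{11}=g_{11}\in2\mathbb{Z}$ with $p$ odd, and $p\nmid s_{11}/2$ (the ``exact valuation'' issue) follows from nondegeneracy of $D_{\underline{L}}\cong\underline{A}_{p^r}^a$, which is what licenses the appeal to Remark \ref{rema:SimpleModIso}.

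For comparison, the paper sidesteps the entrywise check entirely: it defines $S$ as the change-of-basis matrix between two $\mathbb{Z}$-bases of $L^\sharp$, namely the dual basis $G^{-1}(e_1,\dots,e_n)^{\mathrm{T}}$ and $(\frac{1}{p^r}e_1,e_2,\dots,e_n)^{\mathrm{T}}$, so $S\in\mathrm{GL}_n(\mathbb{Z})$ holds by construction and $S\cdot G^{-1}=\mathop{\mathrm{diag}}(p^{-r},1,\dots,1)$ gives the factorization at once. Your Smith-normal-form justification of the adapted basis is a useful detail the paper leaves implicit; once the $s_{11}$ slip is repaired, the two proofs are essentially identical.
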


\begin{proof}
We denote by $Q$ the quadratic map of $D_{\underline{L}}$, and $B_Q$ the associated bilinear map.

Suppose that $D_{\underline{L}}$ and $\underline{A}_{p^r}^{a}$ are isomorphic. Then there is a basis $(e_1,e_2,\dots,e_n)$ of $L$ such that $(\frac{1}{p^r}e_1,e_2,\dots,e_n)$ is a basis of $L^\sharp$. Let $G$ be the Gram matrix of $\underline{L}$ with respect to $(e_1,e_2,\dots,e_n)$. Since $G^{-1}(e_1,e_2,\dots,e_n)^{\mathrm{T}}$ is also a basis of $L^\sharp$ by Proposition \ref{propBasisPropOnDualLattice}, there exists an $S \in GL_n(\mathbb{Z})$ such that $S\cdot G^{-1}(e_1,e_2,\dots,e_n)^{\mathrm{T}} = (\frac{1}{p^r}e_1,e_2,\dots,e_n)^{\mathrm{T}}$, i.e., $G=\mathop{\mathrm{diag}}(p^r,1,\dots,1)\cdot S$. We claim that this $S=(s_{ij})_{1 \leq i,j \leq n}$ satisfies the required three conditions. First of all, $s_{ii}$ are all even integers since $\underline{L}$ is an even lattice. Secondly, we have $\legendre{s_{11}/2}{p}=\legendre{a}{p}$ by Remark \ref{rema:SimpleModIso} and the fact that $Q(x_1\frac{1}{p^r}e_1+x_2e_2+\dots x_ne_n+L)=\frac{s_{11}/2}{p^r}x_1^2 +\mathbb{Z}$ with $x_1,\dots,x_n \in \mathbb{Z}$. Finally, the third condition on $S$ holds, because $G$ is symmetric.

Conversely, suppose that there is a basis $(e_1, e_2, \dots, e_n)$ of $L$ such that the Gram matrix $G$ of $\underline{L}$ with respect to this basis can be decomposed as $G=\mathop{\mathrm{diag}}\left(p^r, 1, \dots, 1\right) \cdot S$, where $S=(s_{ij})_{1 \leq i,j \leq n} \in GL_n(\mathbb{Z})$ satisfies the three conditions listed in the lemma. Then $G^{-1}(e_1, e_2, \dots, e_n)^{\mathrm{T}}$, hence $S\cdot G^{-1}(e_1, e_2, \dots, e_n)^{\mathrm{T}}$, is a basis of $L^\sharp$. It follows that  $Q(x_1\frac{1}{p^r}e_1+x_2e_2+\dots x_ne_n+L)=\frac{s_{11}/2}{p^r}x_1^2 +\mathbb{Z}$ for  $x_1,\dots,x_n \in \mathbb{Z}$ using \eqref{eq:RelationBetQandBQ}. So $D_{\underline{L}}$ is isomorphic to $\underline{A}_{p^r}^{s_{11}/2}$, and hence isomorphic to $\underline{A}_{p^r}^{a}$, since $\legendre{s_{11}/2}{p}=\legendre{a}{p}$.
\end{proof}

\begin{thm}
\label{thm:LatticeOfLeastRankOfFQMApr}
Let $p$ be an odd prime, $r$ be a positive integer, and $a \in \mathbb{Z}$ with $(p,a)=1$.
\begin{enumerate}
\item If $\legendre{a}{p}=1$, then the discriminant module of the even nondegenerate lattice given by the Gram matrix
\begin{equation}
\label{eq:GramMatrixNondegLatticeOfApr1}
\begin{pmatrix}
2p^r & p^r \\
p^r & \frac{1}{2}\left(p^r-\legendre{-1}{p}^r\right)
\end{pmatrix}
\end{equation}
is isometrically isomorphic to $\underline{A}_{p^r}^{a}$. Moreover, there is no even nondegenerate lattice of a smaller rank with this property.
\item If $\legendre{a}{p}=-1$ and $r$ is even, we construct an even nondegenerate lattice as follows: choose another odd prime $q \neq p$ such that $\legendre{q}{p}=-1$, and let $v$ be a solution of the system of congruences
\begin{equation*}
p^rv^2 \equiv 1 \mod q, \qquad v^2 \equiv 1 \mod 4.
\end{equation*}
Define the required lattice via the Gram matrix
\begin{equation}
\label{eq:GramMatrixNondegLatticeOfApr2}
\begin{pmatrix}
2qp^r & p^rv \\
p^rv & \frac{p^rv^2-1}{2q}
\end{pmatrix}
.
\end{equation}
Then the discriminant module of this lattice is isometrically isomorphic to $\underline{A}_{p^r}^{a}$. Moreover, there is no even nondegenerate lattice of a smaller rank with this property.
\item If  $\legendre{a}{p}=-1$, $r$ is odd, and $ p \equiv 3 \mod 4$, then the discriminant module of the even nondegenerate lattice given by the Gram matrix
\begin{equation}
\label{eq:GramMatrixNondegLatticeOfApr3}
\begin{pmatrix}
-2p^r & p^r \\
p^r & -\frac{1}{2}\left(p^r+1\right)
\end{pmatrix}
\end{equation}
is isometrically isomorphic to $\underline{A}_{p^r}^{a}$. Moreover, there is no even nondegenerate lattice of a smaller rank with this property.
\item If $\legendre{a}{p}=-1$, $r$ is odd, and $ p \equiv 1 \mod 4$, then there exist no even nondegenerate lattices of rank $1$ or $2$ whose discriminant module is isometrically isomorphic to $\underline{A}_{p^r}^{a}$. Moreover, there is an even nondegenerate lattice of rank $4$ whose discriminant module is isomorphic to $\underline{A}_{p^r}^{a}$. To give such a lattice, we first choose an odd prime $q$ such that $q \equiv 2 \mod 3$ and $\legendre{q}{p}=-1$. Then we let $v$ be a solution of the system of congruences
\begin{equation*}
p^rv^2 \equiv -\frac{4q+1}{3} \mod q, \qquad v^2 \equiv -\frac{4q+1}{3} \mod 4,
\end{equation*}
and set $x=\frac{1}{4q}(p^rv^2+\frac{4q+1}{3})$. The lattice is then given by the Gram matrix
\begin{equation}
\label{eq:GramMatrixNondegLatticeOfApr4}
\begin{pmatrix}
2qp^r & p^rv & 0 & 0 \\
p^rv & 2x & 1 & 0 \\
0 & 1 & 2 & 1 \\
0 & 0 & 1 & 2
\end{pmatrix}
\end{equation}
\end{enumerate}
\end{thm}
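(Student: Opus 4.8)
The plan is to derive all four existence statements from Lemma~\ref{lemm:HowToFindEvenLatticeForApr} in a uniform fashion, and to handle the minimality statements by an order count in cases (1)--(3) and by a signature obstruction in case (4). For existence, in each case I would write $G$ for the displayed Gram matrix, set $S := \mathop{\mathrm{diag}}(p^{-r},1,\dots,1)\cdot G$, and check the three conditions of Lemma~\ref{lemm:HowToFindEvenLatticeForApr} for this $S$. That $S \in GL_n(\mathbb{Z})$ amounts to checking its entries are integral and $\det S = p^{-r}\det G = \pm 1$; a short computation gives $\det G = -p^r\legendre{-1}{p}^r$, $-p^r$, $p^r$, and $p^r$ for \eqref{eq:GramMatrixNondegLatticeOfApr1}, \eqref{eq:GramMatrixNondegLatticeOfApr2}, \eqref{eq:GramMatrixNondegLatticeOfApr3}, \eqref{eq:GramMatrixNondegLatticeOfApr4} respectively (the last using $4qx = p^r v^2 + \tfrac{4q+1}{3}$). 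Condition~(3) holds automatically from the symmetry of $G$. Condition~(1), that the diagonal entries of $S$ be even, is where the case hypotheses enter: it requires $4 \mid p^r - \legendre{-1}{p}^r$ in (1), $4q \mid p^r v^2 - 1$ in (2) (using that $r$ even makes $p^r \equiv 1 \bmod 8$ and that $v$ is odd), $4 \mid p^r + 1$ in (3) (using $p \equiv 3 \bmod 4$, $r$ odd), and $x \in \mathbb{Z}$ in (4) (which I would check as $12q \mid 3p^r v^2 + 4q + 1$, separately modulo $3$, $4$, $q$, using $q \equiv 2 \bmod 3$, $p \equiv 1 \bmod 4$, $v$ odd). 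Condition~(2), $\legendre{s_{11}/2}{p} = \legendre{a}{p}$, reads $\legendre{1}{p}$, $\legendre{q}{p}$, $\legendre{-1}{p}$, $\legendre{q}{p}$ in the four cases, and each equals $\legendre{a}{p}$ by the running hypothesis on $\legendre{a}{p}$ and the choice of $q$. Lemma~\ref{lemm:HowToFindEvenLatticeForApr} then gives $D_{\underline{L}} \cong \underline{A}_{p^r}^{s_{11}/2} \cong \underline{A}_{p^r}^a$.

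Before that, in cases (2) and (4), I must confirm the auxiliary data exist. A prime $q \neq p$ with $\legendre{q}{p} = -1$ (case (2)), and a prime $q \equiv 2 \bmod 3$ with $\legendre{q}{p} = -1$ (case (4)), both exist --- the latter by Dirichlet's theorem on primes in arithmetic progressions. Then $v$ must be solved for. In case (2) the congruence $p^r v^2 \equiv 1 \pmod q$ is solvable because $r$ even makes $p^r$ a square mod $q$, and $v^2 \equiv 1 \pmod 4$ holds for any odd $v$. In case (4) the key point is that $p^r v^2 \equiv -\tfrac{4q+1}{3} \pmod q$ is solvable: reducing the right side modulo $q$ and chasing Jacobi symbols, its solvability is equivalent to $-\legendre{p}{q}^r = 1$, where one uses $\legendre{3}{q} = -\legendre{-1}{q}$ (valid since $q \equiv 2 \bmod 3$); since $r$ is odd and $p \equiv 1 \bmod 4$ this reads $\legendre{q}{p} = -1$, precisely the defining property of $q$. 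The condition modulo $4$ is satisfied by an odd $v$ since $-\tfrac{4q+1}{3} \equiv 1 \pmod 4$.

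For minimality: in cases (1)--(3), any even nondegenerate lattice of rank $0$ has trivial discriminant module, and one of rank $1$ has Gram matrix $(2m)$ with $m \neq 0$, hence discriminant module of even order $2|m|$; since $\underline{A}_{p^r}^a$ has odd order $p^r > 1$, neither occurs, so rank $2$ is least. In case (4) I would invoke Milgram's formula, $\sum_{x \in D_{\underline{L}}} e^{2\pi i Q(x)} = \sqrt{|D_{\underline{L}}|}\,e^{\pi i\,\mathrm{sign}(\underline{L})/4}$, valid for any even nondegenerate lattice, together with the classical Gauss-sum evaluation $\sum_{x \bmod p^r} e^{2\pi i a x^2/p^r} = \legendre{a}{p}^r \varepsilon_{p^r}\sqrt{p^r}$, where $\varepsilon_{p^r} = 1$ since $p^r \equiv 1 \bmod 4$. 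Combining them for a lattice $\underline{L}$ with $D_{\underline{L}} \cong \underline{A}_{p^r}^a$ gives $e^{\pi i\,\mathrm{sign}(\underline{L})/4} = \legendre{a}{p}^r = -1$, i.e.\ $\mathrm{sign}(\underline{L}) \equiv 4 \pmod 8$; since $|\mathrm{sign}(\underline{L})| \le \mathrm{rank}(\underline{L})$, this forces $\mathrm{rank}(\underline{L}) \ge 4$, so there is no such lattice of rank $1$ or $2$, and the rank-$4$ construction attains the bound.

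The step I expect to be the main obstacle is the non-existence assertion in part (4). One cannot rule out a rank-$2$ even lattice by manipulating Gram matrices alone: an attempt through Lemma~\ref{lemm:HowToFindEvenLatticeForApr} shows only that such a lattice would be indefinite of signature $(1,1)$ --- indeed, from $s_{11}s_{22} - p^r s_{12}^2 = \pm 1$ with $s_{11}, s_{22}$ even one gets $p^r \equiv -\det S \pmod 4$, forcing $\det S = -1$ and hence $\det G = -p^r < 0$ --- and no contradiction emerges at this level. The genuinely new ingredient is the global signature-modulo-$8$ invariant of the discriminant form, supplied by Milgram's formula. The other somewhat delicate part is the bookkeeping in the rank-$4$ construction of (4): solvability of the congruences for $v$ via reciprocity, the divisibilities forcing $x \in \mathbb{Z}$, and the final identification $\legendre{s_{11}/2}{p} = \legendre{q}{p} = \legendre{a}{p}$ must all be reconciled, though each is routine.
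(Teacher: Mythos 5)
Your proposal is correct, and its existence halves coincide with the paper's own proof: both verify the three conditions of Lemma \ref{lemm:HowToFindEvenLatticeForApr} for $S=\mathop{\mathrm{diag}}(p^{-r},1,\dots,1)\cdot G$, with the same determinant computations and the same congruence/reciprocity checks for the auxiliary data (solvability for $v$, $\legendre{-3}{q}=-1$ from $q\equiv 2 \bmod 3$, integrality of $x$ checked modulo $3$, $4$, $q$, and $\legendre{s_{11}/2}{p}=\legendre{q}{p}=\legendre{a}{p}$); the rank-$1$ exclusion via the parity of $\vert D_{\underline{L}}\vert$ is also exactly the paper's argument. Where you genuinely diverge is the rank-$2$ exclusion in Part 4: the paper stays elementary, writing the putative Gram matrix as $\mathop{\mathrm{diag}}(p^r,1)\cdot S$ with $\legendre{s_{11}/2}{p}=-1$, forcing $4xy-p^rv^2=-1$ by reduction mod $4$, and then deriving a contradiction with quadratic reciprocity from the prime factorization of $x$ (including the factor $2$, which leads to the incompatible pair $8\mid p^rv^2-1$ and $p\equiv 5 \bmod 8$); you instead invoke Milgram's formula together with the classical Gauss-sum evaluation to conclude $\mathop{\mathrm{sign}}(\underline{L})\equiv 4 \bmod 8$, hence rank $\geq 4$. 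Your route proves strictly more --- it also excludes rank $3$, which the theorem does not claim and which the paper only settles later, in Remark \ref{rema:CompletionOfThmApr}, by precisely this signature argument --- at the cost of importing the analytic input of Theorem \ref{thm:MilgramTheorem}; there is no circularity, since you use the classical Gauss sum directly rather than Proposition \ref{prop:SigmaInvariantForFQM} (whose proof in the paper rests on these very lattices). One side remark of yours is inaccurate: it is not true that a rank-$2$ lattice ``cannot be ruled out by manipulating Gram matrices alone'' --- the paper's reciprocity argument does exactly that, the key extra leverage being condition 2 of Lemma \ref{lemm:HowToFindEvenLatticeForApr}; this misjudgment does not affect the validity of your proof.
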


\begin{rema}
In the fourth part we did not tell anything about whether there is a lattice of rank $3$ satisfying the required condition. It turned out that such lattices do not exist by Milgram's formula (See, for instance, \cite[Appendix 4]{MH1973}). We will get back to this matter later in Remark \ref{rema:CompletionOfThmApr}.
\end{rema}

\begin{proof}[Proof of Theorem \ref{thm:LatticeOfLeastRankOfFQMApr}]
First of all, note that there is no even nendegenerate lattice of rank $1$ whose discriminant module is isomorphic to $\underline{A}_{p^r}^{a}$, since such discriminant module must have an even cardinality. As a corollary, the statements in Part 1 and Part 3 follows, using Lemma \ref{lemm:HowToFindEvenLatticeForApr}.

Secondly, we prove the statement in Part 2. Since $r$ is even, $v$ must exist, and $\frac{p^rv^2-1}{2q}$ must be an even integer. Now the statement also follows from Lemma \ref{lemm:HowToFindEvenLatticeForApr}.

Finally, we prove the statement in Part 4. There exists no even nondegenerate lattice of rank $2$ whose discriminant module is $\underline{A}_{p^r}^{a}$. This can be proved by contradicition. Assume that there exists such lattice $\underline{L}$. By Lemma \ref{lemm:HowToFindEvenLatticeForApr}, the Gram matrix of $\underline{L}$ with respect to some basis is $\mathop{\mathrm{diag}}(p^r,1)\cdot S$, with S being
\begin{equation*}
\begin{pmatrix}
2x & v \\
p^rv & 2y
\end{pmatrix}
\end{equation*}
for some $x, y, v \in \mathbb{Z}$ with $\legendre{x}{p}=-1$ and $4xy-p^rv^2=\pm 1$. By reduction mod $4$ in the latter formula, only the minus symbol is possible. If there is an odd prime factor $q$ of $x$ such that $\legendre{q}{p}=-1$, then $\legendre{-1}{q}\cdot\legendre{p^r}{q}\cdot\legendre{v^2}{q}=\legendre{-1}{q}$, which contradicts the quadratic reciprocity law. Therefore, $2 \mid x$, and $\legendre{2}{p}=-1$. It follows that $8 \mid p^rv^2-1$ and $p \equiv 5 \mod 8$, which contradicts each other. It remains to prove that the given lattice \eqref{eq:GramMatrixNondegLatticeOfApr4} actually produces the required module. The prime $q$ must exist. Moreover, we have $\legendre{-3}{q}=\legendre{-1}{q}\cdot\legendre{3}{q}=(-1)^{\frac{q-1}{2}}\cdot\legendre{q}{3}\cdot(-1)^{\frac{q-1}{2}}=-1$. Let $m = -\frac{4q+1}{3}$, so $\legendre{-3}{q}\cdot\legendre{m}{q}=\legendre{4q+1}{q}$, which implies that $\legendre{m}{q}=-1$. It follows that $v$ exists and $x$ is an integer. Now we can verify straightforwardly that the Gram matrix \eqref{eq:GramMatrixNondegLatticeOfApr4} satisfies the condition in Lemma \ref{lemm:HowToFindEvenLatticeForApr}.
\end{proof}

Next we deal with the modules $\underline{A}_{2^r}^a$ (See Definition \ref{deff:FQMABC}.).
\begin{lemm}
\label{lemm:HowToFindEvenLatticeForA2r}
Let $r$ be a positive integer, and $a$ be an odd integer. Let $\underline{L}=(L,B)$ be an (nondegenerate) even lattice of rank $n \in \mathbb{Z}_{\geq 1}$. Then the modules $D_{\underline{L}}$ (See Example \ref{examp:DiscModule} for definition) and $\underline{A}_{2^r}^{a}$ are isometrically isomorphic if and only if there exists a basis of $L$ such that the Gram matrix of $\underline{L}$ with respect to this basis is of the form $\mathop{\mathrm{diag}}\left(2^r, 1, \dots, 1\right) \cdot S$, where $S=(s_{ij})_{1 \leq i,j \leq n}$ is an invertible integral matrix (i.e., $S \in GL_n(\mathbb{Z})$) satisfying
\begin{enumerate}
\item $s_{22},\dots,s_{nn} \in 2\mathbb{Z}$,
\item $s_{11} \notin 2\mathbb{Z}$ and $s_{11}a$ is a quadratic residue mod $2^{r+1}$,
\item $2^rs_{1j}=s_{j1}$ for $2\leq j\leq n$ and $s_{ij}=s_{ji}$ for $2 \leq i,j \leq n$.
\end{enumerate}
\end{lemm}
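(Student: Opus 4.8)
The plan is to mimic, almost verbatim, the proof of Lemma \ref{lemm:HowToFindEvenLatticeForApr}, with the prime $p$ replaced by $2$ and with care taken of the extra factor of $2$ in the definition \eqref{eq:DefA2} of $\underline{A}_{2^r}^a$.

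For the forward implication, suppose $D_{\underline{L}} \cong \underline{A}_{2^r}^a$. Since the underlying group of $\underline{A}_{2^r}^a$ is cyclic of order $2^r$, the quotient $L^\sharp/L$ is cyclic of order $2^r$, so one can pick a $\mathbb{Z}$-basis $(e_1,\dots,e_n)$ of $L$ for which $(\tfrac{1}{2^r}e_1, e_2, \dots, e_n)$ is a $\mathbb{Z}$-basis of $L^\sharp$. Writing $G$ for the Gram matrix of $\underline{L}$ in this basis and comparing, via Proposition \ref{propBasisPropOnDualLattice}, the two $\mathbb{Z}$-bases $G^{-1}(e_1,\dots,e_n)^{\mathrm{T}}$ and $(\tfrac{1}{2^r}e_1, e_2, \dots, e_n)^{\mathrm{T}}$ of $L^\sharp$, we obtain $S \in \mathrm{GL}_n(\mathbb{Z})$ with $G = \mathop{\mathrm{diag}}(2^r,1,\dots,1)\cdot S$. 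One then verifies the three conditions on $S=(s_{ij})$: the symmetry of $G$ gives condition 3; for $i \geq 2$ the $(i,i)$-entry of $G$ equals $s_{ii}$, which must lie in $2\mathbb{Z}$ because $\underline{L}$ is even, giving condition 1; and \eqref{eq:RelationBetQandBQ} yields $Q(\tfrac{x_1}{2^r}e_1 + x_2 e_2 + \cdots + x_n e_n + L) = \tfrac{s_{11}}{2^{r+1}}x_1^2 + \mathbb{Z}$, so $D_{\underline{L}}$ is isometric to $\mathbb{Z}/2^r\mathbb{Z}$ equipped with the quadratic map $x \mapsto \tfrac{s_{11}}{2^{r+1}}x^2 + \mathbb{Z}$. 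Nondegeneracy of $D_{\underline{L}}$ forces $s_{11}$ to be odd (otherwise $2^{r-1}+2^r\mathbb{Z}$ lies in the radical), so this module is $\underline{A}_{2^r}^{s_{11}}$, and Remark \ref{rema:SimpleModIso} translates $\underline{A}_{2^r}^{s_{11}} \cong \underline{A}_{2^r}^a$ into the assertion that $s_{11}a$ is a quadratic residue mod $2^{r+1}$, i.e. condition 2. The one structural difference with the odd-prime case is that the $(1,1)$-entry $2^r s_{11}$ of $G$ is automatically even since $r \geq 1$, which is exactly why condition 1 does not constrain $s_{11}$.

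For the reverse implication, start from a basis $(e_1, \dots, e_n)$ of $L$ with Gram matrix $G = \mathop{\mathrm{diag}}(2^r, 1, \dots, 1)\cdot S$, $S$ as in the statement. Then $\det G = \pm 2^r \neq 0$, and since $S \in \mathrm{GL}_n(\mathbb{Z})$, applying $S$ to the dual basis $G^{-1}(e_1, \dots, e_n)^{\mathrm{T}}$ of $L^\sharp$ (Proposition \ref{propBasisPropOnDualLattice}) shows that $(\tfrac{1}{2^r}e_1, e_2, \dots, e_n)$ is again a $\mathbb{Z}$-basis of $L^\sharp$; hence $L^\sharp/L$ is cyclic of order $2^r$, generated by $\tfrac{1}{2^r}e_1 + L$. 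The same computation with \eqref{eq:RelationBetQandBQ} as above gives $Q(\tfrac{x_1}{2^r}e_1 + x_2 e_2 + \cdots + x_n e_n + L) = \tfrac{s_{11}}{2^{r+1}}x_1^2 + \mathbb{Z}$, so $D_{\underline{L}} \cong \underline{A}_{2^r}^{s_{11}}$ (condition 2 guarantees $s_{11}$ odd), and a final appeal to Remark \ref{rema:SimpleModIso} using condition 2 gives $\underline{A}_{2^r}^{s_{11}} \cong \underline{A}_{2^r}^a$, as required. I expect no genuine obstacle beyond careful bookkeeping once Lemma \ref{lemm:HowToFindEvenLatticeForApr} is in hand; the only substantive points are the appearance of $2^{r+1}$ rather than $2^r$ in the quadratic residue condition, traced back to \eqref{eq:DefA2}, and the observation that the "even lattice" hypothesis does not restrict $s_{11}$.
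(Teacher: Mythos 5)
Your proposal is correct and follows exactly the route the paper intends: the paper's own proof of this lemma is just the remark that one repeats the argument of Lemma \ref{lemm:HowToFindEvenLatticeForApr} with $p$ replaced by $2$, and you carry out precisely that adaptation, correctly handling the two points where the case $p=2$ differs (the modulus $2^{r+1}$ coming from \eqref{eq:DefA2}, via Remark \ref{rema:SimpleModIso}, and the fact that evenness of $\underline{L}$ no longer constrains $s_{11}$, whose oddness instead follows from nondegeneracy of $D_{\underline{L}}$).
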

\begin{proof}
The proof is almost the same as that of Lemma \ref{lemm:HowToFindEvenLatticeForApr}. Just replace $p$ by $2$. We omit the details.
\end{proof}

To state the theorem for $\underline{A}_{2^r}^a$, we need first recall some basic facts in elementary number theory. Denote by $\left(\mathbb{Z}/2^{r+1}\mathbb{Z}\right)^\times$ the multiplicative group of all units in the quotient ring $\mathbb{Z}/2^{r+1}\mathbb{Z}$, where $r \geq 1$. Then the map
\begin{align*}
\mathbb{Z}/2\mathbb{Z} \times \mathbb{Z}/2^{r-1}\mathbb{Z} &\longrightarrow \left(\mathbb{Z}/2^{r+1}\mathbb{Z}\right)^\times \\
(n_0 + 2\mathbb{Z},n_1 + 2^{r-1}\mathbb{Z}) &\longmapsto (-1)^{n_0}5^{n_1} + 2^{r+1}\mathbb{Z}
\end{align*}
is a group isomorphism, where $\mathbb{Z}/2\mathbb{Z}$ and $ \mathbb{Z}/2^{r-1}\mathbb{Z}$ are regarded as the additive group of the ambient rings. Moreover, let $U$ be the group of squares of elements in $\left(\mathbb{Z}/2^{r+1}\mathbb{Z}\right)^\times$. Then by the above isomorphism, one can verify that $\left[\left(\mathbb{Z}/2^{r+1}\mathbb{Z}\right)^\times\mathbin{:}U\right]$ equals $2$ if $r=1$, and equals $4$ if $r>1$. The set $\{\pm 1 + 2^{r+1}\mathbb{Z}, \pm 5 + 2^{r+1}\mathbb{Z} \}$ forms a complete system of representatives of the quotient $\left(\mathbb{Z}/2^{r+1}\mathbb{Z}\right)^\times/U$, when $r>1$. Moreover, the subgroup $U$ can be described explicitly as the set of elements $5^{2n} + 2^{r+1}\mathbb{Z}$, where $n$ is nonnegative integers, which turns out to be the set of elements $8n+1+2^{r+1}\mathbb{Z}$, where $n \in \mathbb{Z}$. Thus, when we treat $\underline{A}_{2^r}^a$ for $r>1$, there are four cases to consider, according to which element $\pm 1 + 2^{r+1}\mathbb{Z}, \pm 5 + 2^{r+1}\mathbb{Z}$ that $a+ 2^{r+1}\mathbb{Z}$ is congruent to modulo $U$, i.e., according to which number $\pm 1, \pm 5$ that $a$ is congruence to modulo $8$. 

\begin{thm}
\label{thm:LatticeOfLeastRankOfFQMA2r}
Let $r$ be a positive integer greater than $1$, and $a$ be an odd integer. 
\begin{enumerate}
\item If  $a$ is congruent to $1$ (or to $-1$, respectively) modulo $8$, then the discriminant module of the even nondegenerate lattice given by the $1 \times 1$ Gram matrix $(2^r)$ (or the matrix $(-2^r)$, respectively) is isometrically isomorphic to $\underline{A}_{2^r}^a$.
\item If $a$ is congruent to $\varepsilon \cdot 5 $ modulo $8$, where $\varepsilon = \pm 1$, and $r$ is odd, we define an even nondegenerate lattice as follows: let $v$ be a solution of the congruence $2^{r-1}v^2 \equiv -\varepsilon \mod 5$, and set $x=\frac{2^{r-1}v^2+\varepsilon}{5}$; then the required lattice is given by the Gram matrix
\begin{equation}
\label{eq:GramMatrixNondegLatticeOfA2r2}
\begin{pmatrix}
\varepsilon \cdot 5\cdot 2^r & 2^rv & 0 \\
2^rv & \varepsilon \cdot 2x & 1 \\
0 & 1 & 2
\end{pmatrix}
.
\end{equation} 
The discriminant module of this lattice is isometrically isomorphic to $\underline{A}_{2^r}^a$. Moreover, there is no even nondegenerate lattice of a smaller rank with this property.
\item If $a$ is congruent to $\varepsilon \cdot 5$ modulo $8$, where $\varepsilon = \pm 1$, and $r$ is even, we define an even nondegenerate lattice as follows: let $v$ be a solution of the congruence $2^{r-1}v^2 \equiv -1 \mod 3$, and set $x=\frac{2^{r-1}v^2+1}{3}$; then the required lattice is given by the Gram matrix
\begin{equation}
\label{eq:GramMatrixNondegLatticeOfA2r3}
\begin{pmatrix}
-\varepsilon \cdot 3\cdot 2^r & 2^rv & 0 \\
2^rv & -\varepsilon \cdot2x & 1 \\
0 & 1 & -\varepsilon \cdot2
\end{pmatrix}
.
\end{equation} 
The discriminant module of this lattice is isometrically isomorphic to $\underline{A}_{2^r}^a$. Moreover, there is no even nondegenerate lattice of a smaller rank with this property.
\item If  $a$ is congruent to $1$ (or to $-1$, respectively) modulo $4$, then the discriminant module of the even nondegenerate lattice given by the $1 \times 1$ Gram matrix $(2)$ (or the matrix $(-2)$, respectively) is isometrically isomorphic to $\underline{A}_{2}^a$.
\end{enumerate}
\end{thm}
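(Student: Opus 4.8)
The plan is to derive every assertion from Lemma~\ref{lemm:HowToFindEvenLatticeForA2r}, which converts the statement ``$D_{\underline{L}}\cong\underline{A}_{2^r}^a$'' into the existence of a matrix $S\in GL_n(\mathbb{Z})$ of the shape prescribed there. The one arithmetic fact I would invoke repeatedly is recorded in the paragraph preceding the theorem: since $r>1$, the squares in $\left(\mathbb{Z}/2^{r+1}\mathbb{Z}\right)^\times$ are exactly the residues congruent to $1$ modulo $8$ (and, in Part~4, which is the $r=1$ analogue, the squares modulo $4$ are the residues congruent to $1$ modulo $4$). Note also that once a candidate Gram matrix has been written as $G=\mathop{\mathrm{diag}}(2^r,1,\dots,1)\cdot S$ with $S$ meeting the lemma's conditions, $G$ is automatically symmetric with even diagonal and $\det G=\pm2^r\ne0$, so it does define an even nondegenerate lattice.

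For Parts~1 and~4 I would take $n=1$ and $S=(1)$, so $G=(2^r)$ (resp.\ $(2)$); the three conditions of Lemma~\ref{lemm:HowToFindEvenLatticeForA2r} then reduce to ``$s_{11}=1$ is odd and $s_{11}a=a$ is a square modulo $2^{r+1}$'' (resp.\ modulo $4$), i.e.\ $a\equiv1\pmod{8}$ (resp.\ $a\equiv1\pmod{4}$). Taking instead $S=(-1)$, i.e.\ $G=(-2^r)$ (resp.\ $(-2)$), treats the case $a\equiv-1$. Minimality is automatic, since a rank-$0$ lattice has trivial discriminant module.

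For the existence half of Parts~2 and~3, I would first check solvability of the congruences defining $v$. In Part~2, $r$ odd makes $2^{r-1}$ a square modulo $5$, and $-\varepsilon$ is a square modulo $5$ for either sign of $\varepsilon$, so $v$ exists and $x=(2^{r-1}v^2+\varepsilon)/5$ is an integer; in Part~3, $r$ even gives $2^{r-1}\equiv2\pmod{3}$, so the congruence becomes $v^2\equiv1\pmod{3}$, which is solvable, and $x=(2^{r-1}v^2+1)/3$ is an integer. Then I would factor the displayed $3\times3$ Gram matrix as $\mathop{\mathrm{diag}}(2^r,1,1)\cdot S$ (divide its first row by $2^r$), read off $S$, and verify the lemma's hypotheses: condition~(3) (symmetry of the lower-right block and the $2^r$-scaling relating the first row and column) is visible; $s_{22},s_{33}\in2\mathbb{Z}$ is clear; $s_{11}$ is $\pm$ an odd integer with $s_{11}a\equiv25\equiv1\pmod{8}$ (Part~2) or $s_{11}a\equiv-15\equiv1\pmod{8}$ (Part~3), hence $s_{11}a$ is a square modulo $2^{r+1}$; and a determinant expansion, simplified using the defining identity for $x$, yields $\det S=-\varepsilon=\pm1$, so $S\in GL_3(\mathbb{Z})$.

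For the minimality half of Parts~2 and~3 (where $a\equiv\pm5\pmod{8}$), I would again use Lemma~\ref{lemm:HowToFindEvenLatticeForA2r} to exclude ranks $1$ and $2$. A rank-$1$ lattice would require $S=(s_{11})$ with $s_{11}=\pm1$ and $s_{11}a\equiv1\pmod{8}$, which is impossible because $\pm a\equiv\pm5\pmod{8}$. A rank-$2$ lattice would require $S\in GL_2(\mathbb{Z})$ with $s_{11}$ odd, $s_{22}$ even, and $s_{21}=2^rs_{12}$; then $\det S=s_{11}s_{22}-2^rs_{12}^2$ is even, contradicting $\det S=\pm1$ (so in fact no even lattice of rank $2$ has discriminant module isomorphic to any $\underline{A}_{2^r}^a$). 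Hence rank $3$ is least. I expect the only genuinely delicate point to be the solvability of the auxiliary congruences for $v$, where the parity of $r$ is precisely what makes $2^{r-1}$ a square modulo $5$ in Part~2 and collapses the congruence to $v^2\equiv1$ modulo $3$ in Part~3; everything else amounts to matching the given matrices against the three conditions of Lemma~\ref{lemm:HowToFindEvenLatticeForA2r} together with two short determinant computations.
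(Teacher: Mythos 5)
Your proposal is correct and follows essentially the same route as the paper: everything is reduced to Lemma \ref{lemm:HowToFindEvenLatticeForA2r}, with the same congruence-solvability checks and determinant computations for the displayed $3\times 3$ matrices, and the same parity-of-determinant argument ruling out rank $2$ (plus the mod-$8$ square criterion ruling out rank $1$ when $a\equiv\pm5$). The paper's proof is merely terser in Parts 1 and 4 and in the rank-$1$ exclusion, where your explicit verifications fill in exactly what the paper leaves as "straightforward."
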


\begin{proof}
The statements in Part 1 and 4 can be verified straightforwardly.

Now proceed to prove the statements in Part 2 and 3. First of all, one can prove that there is no even nondegenerate lattice of rank $1$ whose discriminant module is isomorphic to $\underline{A}_{2^r}^a$. Secondly, we will prove that there is no such lattice of rank $2$ by contradiction. Assume that there is a such lattice $\underline{L}$ of rank $2$. By Lemma \ref{lemm:HowToFindEvenLatticeForA2r}, $\underline{L}$ has a Gram matrix (with respect to some basis) of the form
\begin{equation*}
\begin{pmatrix}
2^ra & 2^rv \\
2^rv & 2x
\end{pmatrix}
\end{equation*}
with $a,v,x \in \mathbb{Z}$ and $2ax-2^rv^2=\pm 1$. This is impossible, since the determinant must be even. Finally, we prove that the lattices given by \eqref{eq:GramMatrixNondegLatticeOfA2r2} and \eqref{eq:GramMatrixNondegLatticeOfA2r3} actually serve the purpose. Note that the congruence in Part 2 and in Part 3 are both solvable, hence $x$'s are integers in both cases. Then we can verify that \eqref{eq:GramMatrixNondegLatticeOfA2r2} and \eqref{eq:GramMatrixNondegLatticeOfA2r3} satisfy the conditions in Lemma \ref{lemm:HowToFindEvenLatticeForA2r}, which implies the desired isomorphisms.
\end{proof}

Finally, we deal with the modules $\underline{D}_{2^r}^{a,b,c}$ (For definition, see \eqref{eq:DefD}). Note that this includes the modules $\underline{B}_{2^r}$ and $\underline{C}_{2^r}$ (defined in Definition \ref{deff:FQMABC}).

\begin{lemm}
\label{lemm:HowToFindEvenLatticeForD2r}
Let $a,b,c$ be integers with $b$ odd, and let $r$ be a positive integer. Let $\underline{L}=(L,B)$ be an even nondegenerate lattice of rank $n \in \mathbb{Z}_{\geq 1}$. Then $D_{\underline{L}}$ and $\underline{D}_{2^r}^{a,b,c}$ are isometrically isomorphic if and only if $n \geq 2$, and there exists a basis of $L$ such that the Gram matrix of $\underline{L}$ with respect to this basis is of the form $\mathop{\mathrm{diag}}(2^r,2^r,1,\dots,1)\cdot S$, where $S=(s_{ij})_{1 \leq i,j \leq n} \in GL_n(\mathbb{Z})$ satisfies
\begin{enumerate}
\item $s_{33},\dots,s_{nn} \in 2\mathbb{Z}$,
\item $s_{11}$, $s_{22}$ are even with $s_{11}s_{22} \equiv 4ac \mod 8$, and $s_{12}=s_{21}$ are odd, 
\item $2^rs_{1j}=s_{j1}$ and $2^rs_{2j}=s_{j2}$  for $3 \leq j \leq n$, and $s_{ij}=s_{ji}$ for $3 \leq i,j \leq n$.
\end{enumerate}
\end{lemm}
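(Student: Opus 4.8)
The plan is to mimic the structure of the proofs of Lemma \ref{lemm:HowToFindEvenLatticeForApr} and Lemma \ref{lemm:HowToFindEvenLatticeForA2r}, with the essential new feature that the cyclic quotient $\mathbb{Z}/p^r\mathbb{Z}$ is replaced by the rank-two quotient $(\mathbb{Z}/2^r\mathbb{Z})^2$, so the distinguished part of the dual-basis scaling matrix becomes $\mathop{\mathrm{diag}}(2^r,2^r,1,\dots,1)$. First I would prove the necessity direction. Suppose $D_{\underline{L}} \cong \underline{D}_{2^r}^{a,b,c}$. Since $\underline{D}_{2^r}^{a,b,c}$ has underlying group $(\mathbb{Z}/2^r\mathbb{Z})^2$, which requires at least two generators, the quotient $L^\sharp/L$ needs at least two generators, forcing $n = \mathop{\mathrm{rank}} L \geq 2$. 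Next, using the isomorphism $D_{\underline{L}} \cong \underline{D}_{2^r}^{a,b,c}$ together with Proposition \ref{propBasisPropOnDualLattice}, I would produce a $\mathbb{Z}$-basis $(e_1,\dots,e_n)$ of $L$ such that $(\tfrac{1}{2^r}e_1, \tfrac{1}{2^r}e_2, e_3,\dots,e_n)$ is a $\mathbb{Z}$-basis of $L^\sharp$ and such that the images of $\tfrac{1}{2^r}e_1$ and $\tfrac{1}{2^r}e_2$ in $L^\sharp/L$ correspond, under the given isomorphism, to the standard generators of $(\mathbb{Z}/2^r\mathbb{Z})^2$. As in the earlier lemmas, comparing the two dual bases $G^{-1}(e_1,\dots,e_n)^{\mathrm{T}}$ and $(\tfrac{1}{2^r}e_1,\tfrac{1}{2^r}e_2,e_3,\dots,e_n)^{\mathrm{T}}$ yields $S := \mathop{\mathrm{diag}}(2^r,2^r,1,\dots,1)^{-1}G \in GL_n(\mathbb{Z})$, i.e. $G = \mathop{\mathrm{diag}}(2^r,2^r,1,\dots,1)\cdot S$.

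It then remains to check that $S = (s_{ij})$ satisfies the three listed conditions. Condition (1), $s_{33},\dots,s_{nn}\in 2\mathbb{Z}$, follows because $\underline{L}$ is even (the diagonal entries of $G$ for the "$1$"-block equal those of $S$), as in the earlier proofs; condition (3) is exactly symmetry of $G$, rewritten through the scaling diagonal. The heart of the matter is condition (2). Using formula \eqref{eq:RelationBetQandBQ}, I would compute the quadratic form $Q$ on $D_{\underline{L}}$ in the coordinates $x_1\tfrac{1}{2^r}e_1 + x_2\tfrac{1}{2^r}e_2 + x_3 e_3 + \cdots$, obtaining $Q = \tfrac{(s_{11}/2)x_1^2 + s_{12}x_1x_2 + (s_{22}/2)x_2^2}{2^r} + \mathbb{Z}$, which must match $\underline{D}_{2^r}^{a,b,c}$ after an automorphism of $(\mathbb{Z}/2^r\mathbb{Z})^2$. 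Well-definedness forces $s_{11}, s_{22}$ even and $s_{12}$ odd (otherwise the quadratic map would be degenerate, as noted after \eqref{eq:DefD}); so we are comparing two forms $\tfrac{1}{2^r}((s_{11}/2)x^2 + s_{12}xy + (s_{22}/2)y^2)$ and $\tfrac{1}{2^r}(ax^2+bxy+cy^2)$ with odd middle coefficients. The invariant distinguishing such binary forms over $\mathbb{Z}/2^r\mathbb{Z}$ up to change of basis is, by Lemma \ref{lemm:DabcIsoToBorC}, whether both outer coefficients are odd or not — equivalently (since $s_{12}, b$ are odd) the parity data encoded in the product of the outer coefficients mod $8$; a short computation identifies this with the congruence $s_{11}s_{22}\equiv 4ac \bmod 8$. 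For the sufficiency direction I would run the same computation in reverse: given such a basis and matrix $S$, Proposition \ref{propBasisPropOnDualLattice} shows $(\tfrac{1}{2^r}e_1,\tfrac{1}{2^r}e_2,e_3,\dots,e_n)$ is a basis of $L^\sharp$, formula \eqref{eq:RelationBetQandBQ} shows $D_{\underline{L}} \cong \underline{D}_{2^r}^{s_{11}/2,\,s_{12},\,s_{22}/2}$, and Lemma \ref{lemm:DabcIsoToBorC} together with the congruence $s_{11}s_{22}\equiv 4ac\bmod 8$ identifies this with $\underline{D}_{2^r}^{a,b,c}$.

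The step I expect to be the main obstacle is pinning down the precise numerical form of condition (2) — that is, verifying that "$\underline{D}_{2^r}^{s_{11}/2,s_{12},s_{22}/2} \cong \underline{D}_{2^r}^{a,b,c}$" is equivalent to the single congruence $s_{11}s_{22}\equiv 4ac\bmod 8$ (together with the parities already forced). Lemma \ref{lemm:DabcIsoToBorC} tells us each of these modules is isomorphic to $\underline{B}_{2^r}$ or $\underline{C}_{2^r}$ according to whether its two outer coefficients are both odd or not; so the equivalence reduces to checking that "$s_{11}/2$ and $s_{22}/2$ both odd" matches "$a$ and $c$ both odd" exactly when $s_{11}s_{22}\equiv 4ac\bmod 8$. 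Writing $s_{11}=2s$, $s_{22}=2t$, this is the elementary assertion that $4st\equiv 4ac\bmod 8$, i.e. $st\equiv ac\bmod 2$, holds precisely when $st$ and $ac$ have the same parity — which is a tautology. The only subtlety is making sure the earlier reduction step (that the quadratic form on $D_{\underline{L}}$ really is presented by the matrix $\mathop{\mathrm{diag}}(2^r,2^r,1,\dots,1)^{-1}G$ in the stated basis, and that the off-diagonal "mixed" terms $s_{1j}, s_{2j}$ for $j\geq 3$ do not contribute to $Q$ on the torsion quotient) is carried out carefully; this is routine given \eqref{eq:RelationBetQandBQ} and the integrality of $L$, exactly as in Lemma \ref{lemm:HowToFindEvenLatticeForApr}, so I would state it briefly and refer back to that proof rather than repeat it.
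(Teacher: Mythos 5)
Your overall route is the same as the paper's: produce an adapted basis so that $(2^{-r}e_1,2^{-r}e_2,e_3,\dots,e_n)$ is a basis of $L^\sharp$, factor the Gram matrix as $\mathop{\mathrm{diag}}(2^r,2^r,1,\dots,1)\cdot S$, compute $Q$ on the two distinguished dual generators via \eqref{eq:RelationBetQandBQ}, identify $D_{\underline{L}}$ with $\underline{D}_{2^r}^{s_{11}/2,\,s_{12},\,s_{22}/2}$, and convert the comparison with $\underline{D}_{2^r}^{a,b,c}$ into the congruence $s_{11}s_{22}\equiv 4ac \bmod 8$ by Lemma \ref{lemm:DabcIsoToBorC} (together with $\underline{B}_{2^r}\not\cong\underline{C}_{2^r}$, Remark \ref{rema:SimpleModIso}); the sufficiency direction and the mod-$8$ bookkeeping are correct. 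The genuine gap is in the necessity of the parity claims in condition (2): you assert that ``well-definedness'' forces $s_{11},s_{22}$ to be even and that otherwise the quadratic map would be degenerate. Neither reason works. The discriminant form $Q\bigl(x_12^{-r}e_1+x_22^{-r}e_2+\dots+L\bigr)=\frac{s_{11}x_1^2+2s_{12}x_1x_2+s_{22}x_2^2}{2^{r+1}}+\mathbb{Z}$ is automatically well defined for any even lattice, whatever the parities of the $s_{ij}$, and it need not be degenerate when $s_{11}$ is odd: the lattice with Gram matrix $\mathop{\mathrm{diag}}(2^r,2^r)$ has $S=\mathrm{I}_2$, i.e.\ $s_{11}=s_{22}=1$ odd and $s_{12}=0$ even, and its discriminant module $\underline{A}_{2^r}^{1}\oplus\underline{A}_{2^r}^{1}$ is nondegenerate. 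So your two principles cannot yield the parities; what excludes such cases is the hypothesis $D_{\underline{L}}\cong\underline{D}_{2^r}^{a,b,c}$ with $b$ odd itself, and some isomorphism invariant must be invoked to exploit it.

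The paper closes exactly this point by indecomposability: if $s_{11}$ were odd, the cyclic subgroup generated by $2^{-r}e_1+L$ would carry a nondegenerate quadratic form (a copy of $\underline{A}_{2^r}^{s_{11}}$), hence would split off as an orthogonal direct summand by Lemma \ref{lemm:BasicLemmaOfStrongliNondeg} and Corollary \ref{coro:FQMStroNondeg}; this contradicts the fact that $\underline{D}_{2^r}^{a,b,c}$ with $b$ odd is isomorphic to $\underline{B}_{2^r}$ or $\underline{C}_{2^r}$ (Lemma \ref{lemm:DabcIsoToBorC}), neither of which is an orthogonal sum of proper submodules (Remark \ref{rema:BasicPropOfSimpleFQM}). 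The same argument gives $s_{22}$ even, and only then does nondegeneracy of $B_Q$ force $s_{12}$ odd, as you say. An alternative repair, closer in spirit to your ``well-definedness'' remark but actually valid, is to observe that the image of the quadratic map of $\underline{D}_{2^r}^{a,b,c}$ is contained in $2^{-r}\mathbb{Z}/\mathbb{Z}$, whereas $Q(2^{-r}e_1+L)=\frac{s_{11}}{2^{r+1}}+\mathbb{Z}$ would not be if $s_{11}$ were odd; since the image of $Q$ is an isomorphism invariant, this also forces the evenness. With this step replaced, the rest of your plan goes through as in the paper.
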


\begin{proof}
Denote by $Q$ the quadratic map of $D_{\underline{L}}$, and $B_Q$ the associated bilinear map.

Suppose that $D_{\underline{L}}$ and $\underline{D}_{2^r}^{a,b,c}$ are isomorphic. Then there is a basis $(e_1, e_2, \dots,e_n)$ of $L$ such that $(\frac{1}{2^r}e_1, \frac{1}{2^r}e_2, e_3, \dots,e_n)$ is a basis of $L^\sharp$. Hence $n \geq 2$. Let $G$ be the Gram matrix of $\underline{L}$ with respect to this basis. Then there is an $S \in GL_n(\mathbb{Z})$ such that $S\cdot G^{-1}=\mathop{\mathrm{diag}}(\frac{1}{2^r},\frac{1}{2^r},1, \dots, 1)$, i.e., $G=\mathop{\mathrm{diag}}(2^r, 2^r,1, \dots, 1)\cdot S$. We shall show that this $S$ satisfies the desired conditions. Since $\underline{L}$ is even we have $s_{33}, \dots, e_{nn} \in 2\mathbb{Z}$. Since $B$ is symmetric, we have $2^rs_{1j}=s_{j1}$ and $2^rs_{2j}=s_{j2}$  for $3 \leq j \leq n$, and $s_{ij}=s_{ji}$ for $3 \leq i,j \leq n$. By \eqref{eq:RelationBetQandBQ}, we have $Q(x_1\frac{1}{2^r}e_1+x_2\frac{1}{2^r}e_2+\dots+x_ne_n+L)=\frac{s_{11}x_1^2+2s_{12}x_1x_2+s_{22}x_2^2}{2^{r+1}}+\mathbb{Z}$. We must have $s_{11}$ is even, since otherwise the submodule generated by $\frac{1}{2^r}e_1+L$ would itself be a finite quadratic module, which implies that $\underline{D}_{2^r}^{a,b,c}$ has an orthogonal direct sum decomposition by Lemma \ref{lemm:BasicLemmaOfStrongliNondeg}. Likewise, we have $s_{22}$ is even. On the other hand, $s_{12}=s_{21}$ must be odd, since $B_Q$ is nondegenerate. Thus, $D_{\underline{L}}$ is isometrically isomorphic to $\underline{D}_{2^r}^{s_{11}/2,s_{12},s_{22}/2}$. Using Lemma \ref{lemm:DabcIsoToBorC},  we deduce that $s_{11}s_{22} \equiv 4ac \mod 8$.

Conversely, suppose that $L$ has a basis $(e_1, e_2, \dots,e_n)$ such that the Gram matrix of $\underline{L}$ with respect to this basis is of the form $\mathop{\mathrm{diag}}(2^r,2^r,1,\dots,1)\cdot S$, where $S=(s_{ij})_{1 \leq i,j \leq n} \in GL_n(\mathbb{Z})$ satisfies the conditions listed in the lemma. Then $(\frac{1}{2^r}e_1, \frac{1}{2^r}e_2, e_3, \dots,e_n)$ is a basis of $L^\sharp$, by Proposition \ref{propBasisPropOnDualLattice}. Using \eqref{eq:RelationBetQandBQ} and the conditions on $S$, we obtain $Q(x_1\frac{1}{2^r}e_1+x_2\frac{1}{2^r}e_2+\dots+x_ne_n+L)=\frac{s_{11}/2\cdot x_1^2+s_{12}x_1x_2+s_{22}/2\cdot x_2^2}{2^{r}}+\mathbb{Z}$. Thus $D_{\underline{L}}$ is isometrically isomorphic to $\underline{D}_{2^r}^{s_{11}/2,s_{12},s_{22}/2}$. Again by Lemma \ref{lemm:DabcIsoToBorC}, we have $D_{\underline{L}}$ is isometrically isomorphic to $\underline{D}_{2^r}^{a,b,c}$.
\end{proof}

\begin{thm}
\label{thm:LatticeOfLeastRankOfFQMBrCr}
Let $r$ be a positive integer, and set $\varepsilon=(-1)^{r+1}$.
\begin{enumerate}
\item Let $v$ be a solution of the congruence $2^rv^2 \equiv -\varepsilon \mod 3$, and set $x=\frac{2^rv^2+\varepsilon}{3}$. Then the discriminant module of the even nondegenerate lattice given by the Gram matrix
\begin{equation}
\label{eq:GramMatrixNondegLatticeOfBr}
\begin{pmatrix}
2^{r+1} & 2^r & 0 & 0 \\
2^r & 2^{r+1} & 2^rv & 0 \\
0 & 2^rv & 2x & 1 \\
0 & 0 & 1 & \varepsilon \cdot 2
\end{pmatrix}
\end{equation}
is isometrically isomorphic to $\underline{B}_{2^r}$. Moreover, there is no such lattice of smaller rank.
\item The discriminant module of the even nondegenerate lattice given by the following Gram matrix
\begin{equation}
\label{eq:GramMatrixNondegLatticeOfCr}
\begin{pmatrix}
0 & 2^r \\
2^r & 0
\end{pmatrix}
\end{equation}
is isometrically isomorphic to $\underline{C}_{2^r}$. Moreover, there is no such lattice of smaller rank.
\end{enumerate}
\end{thm}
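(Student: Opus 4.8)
The plan is to reduce both parts to Lemma~\ref{lemm:HowToFindEvenLatticeForD2r}, using the identities of quadratic maps $\underline{B}_{2^r}=\underline{D}_{2^r}^{1,1,1}$ and $\underline{C}_{2^r}=\underline{D}_{2^r}^{0,1,0}$ (compare \eqref{eq:DefB2}, \eqref{eq:DefC2} with \eqref{eq:DefD}). For the existence statements it then suffices to exhibit the given Gram matrices in the form $\mathop{\mathrm{diag}}(2^r,2^r,1,\dots,1)\cdot S$ with $S\in\mathrm{GL}_n(\mathbb{Z})$ satisfying conditions (1)--(3) of that lemma for $(a,b,c)=(1,1,1)$ or $(0,1,0)$; for the minimality statements it suffices to show these conditions cannot be met for smaller $n$. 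Part~2 is immediate: \eqref{eq:GramMatrixNondegLatticeOfCr} equals $\mathop{\mathrm{diag}}(2^r,2^r)\cdot S$ with $S$ the $2\times 2$ matrix having zero diagonal and off-diagonal entries equal to $1$; this $S$ lies in $\mathrm{GL}_2(\mathbb{Z})$ and meets (1)--(3) for $(a,b,c)=(0,1,0)$ (in particular $s_{11}s_{22}=0\equiv 4\cdot 0\cdot 0\bmod 8$ and $s_{12}=s_{21}=1$ is odd), so its discriminant module is $\underline{D}_{2^r}^{0,1,0}=\underline{C}_{2^r}$; and since Lemma~\ref{lemm:HowToFindEvenLatticeForD2r} requires $n\geq 2$, there is no rank-$1$ even lattice with this discriminant module, so rank $2$ is least.

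For Part~1, first note that the congruence $2^rv^2\equiv-\varepsilon\bmod 3$ reduces (using $2\equiv-1\bmod 3$ and $\varepsilon=(-1)^{r+1}$) to $v^2\equiv 1\bmod 3$, hence is solvable and $x=(2^rv^2+\varepsilon)/3$ is an integer. Let $S$ be obtained from \eqref{eq:GramMatrixNondegLatticeOfBr} by dividing its first two rows by $2^r$; thus $S$ has rows $(2,1,0,0)$, $(1,2,v,0)$, $(0,2^rv,2x,1)$, $(0,0,1,2\varepsilon)$. Its diagonal entries $2,2,2x,2\varepsilon$ are even, $s_{12}=s_{21}=1$ is odd, $s_{11}s_{22}=4\equiv 4\cdot 1\cdot 1\bmod 8$, and the relations $2^rs_{1j}=s_{j1}$, $2^rs_{2j}=s_{j2}$ for $j=3,4$ together with $s_{34}=s_{43}$ all hold, so conditions (1)--(3) are satisfied for $(a,b,c)=(1,1,1)$. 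Expanding $\det S$ by cofactors along the first column gives $\det S=2\bigl(8x\varepsilon-2-2^{r+1}v^2\varepsilon\bigr)-(4x\varepsilon-1)=12x\varepsilon-3-2^{r+2}v^2\varepsilon$; substituting $3x=2^rv^2+\varepsilon$ (so $12x\varepsilon=2^{r+2}v^2\varepsilon+4$) yields $\det S=1$, hence $S\in\mathrm{GL}_4(\mathbb{Z})$. By Lemma~\ref{lemm:HowToFindEvenLatticeForD2r} the discriminant module of \eqref{eq:GramMatrixNondegLatticeOfBr} is $\underline{D}_{2^r}^{1,1,1}=\underline{B}_{2^r}$, and the lattice is nondegenerate since its determinant is $2^{2r}\neq 0$.

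It remains to rule out every even nondegenerate lattice of rank $\leq 3$ with discriminant module $\underline{B}_{2^r}$. Rank $1$ is excluded because Lemma~\ref{lemm:HowToFindEvenLatticeForD2r} requires $n\geq 2$. For rank $2$ or $3$ the lemma forces a Gram matrix $\mathop{\mathrm{diag}}(2^r,2^r,1,\dots,1)\cdot S$ with $S\in\mathrm{GL}_n(\mathbb{Z})$ satisfying (1)--(3) for $(a,b,c)=(1,1,1)$, so $\det S=\pm 1$. If $n=2$, then $\det S=s_{11}s_{22}-s_{12}^2\equiv 4-1=3\bmod 8$, because $s_{11}s_{22}\equiv 4\bmod 8$ and $s_{12}$ is odd; but $\pm 1\not\equiv 3\bmod 8$, a contradiction. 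If $n=3$, expand $\det S$ by cofactors along the third row: its first two terms carry the factor $2^r$, even since $r\geq 1$, and its last term carries the even factor $s_{33}$, so $\det S$ is even, again contradicting $\det S=\pm 1$. Hence the least rank is $4$.

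The only non-routine point is this rank-$3$ exclusion. In the parallel situation for $\underline{A}_{p^r}^a$ (Part~4 of Theorem~\ref{thm:LatticeOfLeastRankOfFQMApr}) the corresponding factor $p^r$ is odd, so the parity argument collapses and one must instead appeal to Milgram's signature formula; here the evenness of $2^r$ makes the elementary congruence argument on $\det S$ go through, which is the one place where the prime being $2$ genuinely helps. The only other mildly delicate step is the verification $\det S=1$ in the rank-$4$ construction, which is precisely where the shape of the Gram matrix \eqref{eq:GramMatrixNondegLatticeOfBr} and the choice of $x$ get pinned down; everything else is a routine check against Lemma~\ref{lemm:HowToFindEvenLatticeForD2r}.
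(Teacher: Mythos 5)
Your proof is correct and follows essentially the same route as the paper: reduce to Lemma \ref{lemm:HowToFindEvenLatticeForD2r} via $\underline{B}_{2^r}=\underline{D}_{2^r}^{1,1,1}$, $\underline{C}_{2^r}=\underline{D}_{2^r}^{0,1,0}$, verify the conditions and $\det S=1$ for the rank-$4$ matrix, and exclude ranks $1$--$3$ (the rank-$3$ exclusion by parity of the third-row expansion is exactly the paper's argument). You in fact supply details the paper leaves out, namely the proof of Part 2 and the explicit mod-$8$ computation behind the rank-$2$ exclusion.
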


\begin{proof}
We omit the proof of the statement in Part 2, and focus on proving the statement in Part 1.

First of all, there is no even nondegenerate lattice of rank $1$ whose discriminant module is isomorphic to $\underline{B}_{2^r}$, since such discriminant module must be a cyclic group, while $\underline{B}_{2^r}$ is not. Secondly, it can be verified by Condition 2 of Lemma \ref{lemm:HowToFindEvenLatticeForD2r} that there is also no such lattice of rank $2$. And then we shall prove there is no such lattice of rank $3$ by contradiction. Assume that there is a lattice with such property. Then its Gram matrix with respect to some basis must be (by Lemma \ref{lemm:HowToFindEvenLatticeForD2r})
\begin{equation*}
\begin{pmatrix}
2^r & 0 & 0 \\
0 & 2^r & 0 \\
0 & 0 & 1
\end{pmatrix}
\cdot
\begin{pmatrix}
2x & y & v \\
y & 2z & w \\
2^rv & 2^rw & 2t
\end{pmatrix}
\end{equation*}
where $x,y,z,v,w,t$ are all integers such that $x,y,z$ are odd, and the determinant of the second matrix in the above expression is $\pm 1$. But we see that the determinant must be even by expansion with respect the third row, which is a contradiction. Finally, we shall verify that  the given matrix in Part 1 of the theorem actually provides an even nondegenerate lattice whose discriminant module is isomorphic to $\underline{B}_{2^r}$. Note that this matrix is a product of $\mathop{\mathrm{diag}}(2^r,2^r,1,1)$ and
\begin{equation}
S =
\begin{pmatrix}
2 & 1 & 0 & 0 \\
1 & 2 & v & 0 \\
0 & 2^rv & 2x & 1 \\
0 & 0 & 1 & \varepsilon \cdot 2
\end{pmatrix}
,
\end{equation}
and that $v$ exists and $x$ is an integer. By direct calculation we see that $\det(S)=1$, and $S$ satisfies the conditions stated in Lemma \ref{lemm:HowToFindEvenLatticeForD2r}. Hence the discriminant module of this lattice is isomorphic to $\underline{B}_{2^r}$ by Lemma \ref{lemm:HowToFindEvenLatticeForD2r}, which concludes the proof.
\end{proof}

\begin{coro}
\label{coro:FQMisDicsModOfEvenLattice}
Any finite quadratic module is isometrically isomorphic to the discriminant module of some even nondegenerate lattice.
\end{coro}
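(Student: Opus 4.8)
The plan is to combine the structure theorem for finite quadratic modules with the explicit lattices constructed earlier in this section. By Theorem \ref{thm:JordanDecompositionOfFQM}, any finite quadratic module $\underline{M}$ is isometrically isomorphic to an orthogonal direct sum $\bigoplus_{k} \underline{M}_k$, where each $\underline{M}_k$ is one of the indecomposable modules $\underline{A}_{p^r}^a$ (for $p > 2$), $\underline{A}_{2^r}^a$, $\underline{B}_{2^r}$, or $\underline{C}_{2^r}$ of Definition \ref{deff:FQMABC}. It therefore suffices to realize each indecomposable piece by an even nondegenerate lattice and to glue these lattices together.

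First I would record the elementary fact that the discriminant-module construction is compatible with orthogonal direct sums: if $\underline{L}_1, \dots, \underline{L}_m$ are even nondegenerate lattices, then $(L_1 \oplus \cdots \oplus L_m)^\sharp = L_1^\sharp \oplus \cdots \oplus L_m^\sharp$ inside $\mathbb{R} \otimes (L_1 \oplus \cdots \oplus L_m)$, and hence $D_{\underline{L}_1 \oplus \cdots \oplus \underline{L}_m}$ is isometrically isomorphic to $D_{\underline{L}_1} \oplus \cdots \oplus D_{\underline{L}_m}$. This is immediate from the description of dual bases in Proposition \ref{propBasisPropOnDualLattice} together with the compatibility of quotients with orthogonal sums noted in Section \ref{sec:Billinear map modules and quadratic map modules}.

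Next, for each indecomposable summand $\underline{M}_k$ I would invoke the appropriate theorem of this section to produce an even nondegenerate lattice $\underline{L}_k$ with $D_{\underline{L}_k} \cong \underline{M}_k$: Theorem \ref{thm:LatticeOfLeastRankOfFQMApr} for $\underline{M}_k = \underline{A}_{p^r}^a$ with $p$ odd (all subcases according to $\legendre{a}{p}$ and the parity of $r$ being covered there), Theorem \ref{thm:LatticeOfLeastRankOfFQMA2r} for $\underline{M}_k = \underline{A}_{2^r}^a$ (its Parts 1--3 for $r > 1$ and its Part 4 for $r = 1$), and Theorem \ref{thm:LatticeOfLeastRankOfFQMBrCr} for $\underline{M}_k = \underline{B}_{2^r}$ or $\underline{C}_{2^r}$. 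Setting $\underline{L} = \bigoplus_k \underline{L}_k$ and applying the direct-sum compatibility from the previous step yields $D_{\underline{L}} \cong \bigoplus_k D_{\underline{L}_k} \cong \bigoplus_k \underline{M}_k \cong \underline{M}$, as desired. The empty orthogonal sum, corresponding to the zero module, is realized by the zero lattice.

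There is essentially no serious obstacle here: the corollary is a bookkeeping consequence of the structure theorem and the three construction theorems. The only points deserving a word of care are the compatibility of the discriminant-module functor with orthogonal direct sums and checking that every isomorphism class of indecomposable module is genuinely covered by one of the three theorems (in particular that $\underline{A}_2^a$, which lies slightly outside the range $r > 1$ in the statement of Theorem \ref{thm:LatticeOfLeastRankOfFQMA2r}, is handled by its Part 4).
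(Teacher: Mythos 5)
Your proof is correct and follows essentially the same route as the paper, which simply cites Theorem \ref{thm:JordanDecompositionOfFQM} together with Theorems \ref{thm:LatticeOfLeastRankOfFQMApr}, \ref{thm:LatticeOfLeastRankOfFQMA2r}, and \ref{thm:LatticeOfLeastRankOfFQMBrCr}. The only difference is that you spell out the compatibility of discriminant modules with orthogonal direct sums of lattices, a point the paper leaves implicit.
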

\begin{proof}
This follows immediately from Theorem \ref{thm:JordanDecompositionOfFQM}, Theorem \ref{thm:LatticeOfLeastRankOfFQMApr}, Theorem \ref{thm:LatticeOfLeastRankOfFQMA2r}, and Theorem \ref{thm:LatticeOfLeastRankOfFQMBrCr}.
\end{proof}

\begin{examp}
We look for lattices whose discriminant module is isometrically isomorphic to $\underline{A}_{m}$ defined in \eqref{eq:DefAm} for $m$ being an odd integer greater than $1$. Suppose that $m=p_1^{r_1}p_2^{r_2}\dots p_k^{r_k}$ with $p_1, p_2,\dots,p_k$ different odd primes, and $r_1,r_2,\dots, r_n$ positive integers. Set $m_i=m/p_i^{r_i}$, then $m_i$'s are relatively prime. Find integers $M_i$ such that $m_1M_1+\dots m_kM_k = 1$. It can be verified that $\underline{A}_{m}$ is isometrically isomorphic to the orthogonal direct sum of the finite quadratic modules $\underline{A}_{p_i^{r_i}}^{M_i}$ with $1 \leq i \leq k$, and the isomorphism is given by the map $x+m\mathbb{Z} \mapsto (x+p_1^{r_1}\mathbb{Z}, x+p_2^{r_2}\mathbb{Z}, \dots, x+p_k^{r_k}\mathbb{Z})$. To find an even nondegenerate lattice with discriminant module $\underline{A}_{m}$, we first construct lattices with Gram matrix \eqref{eq:GramMatrixNondegLatticeOfApr1}, \eqref{eq:GramMatrixNondegLatticeOfApr2}, \eqref{eq:GramMatrixNondegLatticeOfApr3}, or \eqref{eq:GramMatrixNondegLatticeOfApr4} for $\underline{A}_{p_i^{r_i}}^{M_i}$, and then form there orthogonal direct sum. Specially, if $m$ is a square, then $\legendre{M_i}{p_i}=\legendre{m_i}{p_i}=1$. Hence only the Gram matrix \eqref{eq:GramMatrixNondegLatticeOfApr1} would occur. As a consequence, the even nondegenerate lattice with Gram matrix
\begin{equation}
\mathop{\mathrm{diag}}\left(
\begin{pmatrix}
2p_1^{r_1} & p_1^{r_1} \\
p_1^{r_1} & \frac{1}{2}\left(p_1^{r_1}-\legendre{-1}{p_1}^{r_1}\right)
\end{pmatrix}
\dots,
\begin{pmatrix}
2p_k^{r_k} & p_k^{r_k} \\
p_k^{r_k} & \frac{1}{2}\left(p_k^{r_k}-\legendre{-1}{p_k}^{r_k}\right)
\end{pmatrix}
\right)
\end{equation}
has discriminant module $\underline{A}_{m}$. The rank of this lattice is $2k$, which is not necessarily the least rank of such lattices.
\end{examp}

\section{Discriminant modules of positive definite even lattices}
\label{sec:Discriminant modules of positive definite even lattices}
We have proved the theorem of Wall (Corollary \ref{coro:FQMisDicsModOfEvenLattice}) that any finite quadatic module is a discriminant module of an even nondegenerate lattice, by giving concrete constructions for those indecomposable modules. In this section, we aim to find positive definite even lattices for arbitrary finite quadratic modules.

To be aware of the possible least rank before constructing such lattices, we shall need the Milgram's formula. Before stating the theorem, we recall some notions. Let $\underline{M}=(M,Q)$ be a finite quadratic module, we define the $\sigma$-invariant of $\underline{M}$ as
\begin{equation}
\label{deff:SigmaInvariant}
\sigma(\underline{M})=\frac{1}{\sqrt{\vert M \vert}}\sum_{x \in M}\exp(-2\uppi\mathrm{i}Q(x)).
\end{equation}
For an even nondegenerate lattice $\underline{L}=(L,B)$, the $\sigma$-invariant of $\underline{L}$ is defined to be the $\sigma$-invariant of $D_{\underline{L}}$. And we define the signature of $\underline{L}$ (denoted by $\mathop{\mathrm{sign}}(\underline{L})$) to be the signature of the real orthogonal geometry $(\mathbb{R} \otimes L, B)$, i.e., the positive index of inertia minus the negative index of inertia of $(\mathbb{R} \otimes L, B)$. These two quantities are related by
\begin{thm}
\label{thm:MilgramTheorem}
Let $\underline{L}=(L,B)$ be an even nondegenerate lattice, and $D_{\underline{L}}$ be its discriminant module.  Then
\begin{equation}
\sigma(\underline{L})=\sigma(D_{\underline{L}})=\exp\left(-2\uppi\mathrm{i}\left(\frac{\mathop{\mathrm{sign}}(\underline{L})}{8}\right)\right).
\end{equation}
\end{thm}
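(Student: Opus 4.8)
The plan is to prove this via a multiplicative reduction followed by a case check on indecomposable modules. First I would establish multiplicativity: if $\underline{M} = \underline{M}_1 \oplus \underline{M}_2$ is an orthogonal direct sum of finite quadratic modules, then $\sigma(\underline{M}) = \sigma(\underline{M}_1)\sigma(\underline{M}_2)$. This is immediate from the definition \eqref{deff:SigmaInvariant}, because $Q((x_1,x_2)) = Q_1(x_1)+Q_2(x_2)$, $\vert M\vert = \vert M_1\vert\cdot\vert M_2\vert$, and the exponential sum factors. Similarly, signatures add under orthogonal direct sums of lattices: $\mathop{\mathrm{sign}}(\underline{L}_1 \oplus \underline{L}_2) = \mathop{\mathrm{sign}}(\underline{L}_1) + \mathop{\mathrm{sign}}(\underline{L}_2)$, and the right-hand side $\exp(-2\uppi\mathrm{i}\,\mathop{\mathrm{sign}}(\underline{L})/8)$ is multiplicative as well. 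The first equality $\sigma(\underline{L}) = \sigma(D_{\underline{L}})$ is a definition, so only the second equality needs proof.

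Next I would reduce to the case where $\underline{L}$ has small rank by a clever trick rather than attacking general $\underline{L}$ directly. The key observation is that $D_{\underline{L}}$ depends only on the genus of $\underline{L}$, and more usefully: given any even nondegenerate $\underline{L}$, one can find another even nondegenerate lattice $\underline{L}'$ with $D_{\underline{L}'} \cong D_{\underline{L}}$ (so the same $\sigma$-invariant) but with $\mathop{\mathrm{sign}}(\underline{L}') \equiv \mathop{\mathrm{sign}}(\underline{L}) \pmod 8$, by adding copies of the unimodular lattices $E_8$ (signature $8$, trivial discriminant) or the hyperbolic plane $U$ (signature $0$, trivial discriminant). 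This does not yet reduce the rank. The actual reduction I would use: since every finite quadratic module decomposes into the indecomposable modules of Definition \ref{deff:FQMABC} by Theorem \ref{thm:JordanDecompositionOfFQM}, and since $\sigma$ is multiplicative, it suffices to verify the formula for one lattice realizing each indecomposable module; for these we have explicit small-rank Gram matrices from Theorem \ref{thm:LatticeOfLeastRankOfFQMApr}, Theorem \ref{thm:LatticeOfLeastRankOfFQMA2r}, and Theorem \ref{thm:LatticeOfLeastRankOfFQMBrCr}. But this only proves the formula for lattices assembled as orthogonal sums of those specific building blocks — so a genuinely general argument requires one more ingredient: the fact that the quantity $\sigma(\underline{L})\cdot\exp(2\uppi\mathrm{i}\,\mathop{\mathrm{sign}}(\underline{L})/8)$ is an invariant that is both multiplicative and unchanged by stable equivalence (adding unimodular summands), together with the fact that any even nondegenerate $\underline{L}$ becomes, after adding enough copies of $U$, isometric to an orthogonal sum involving only the standard building blocks. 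This last fact is essentially the surjectivity direction already packaged in Corollary \ref{coro:FQMisDicsModOfEvenLattice} combined with the classification of indefinite lattices.

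For the base-case computations, the heart is a Gauss-sum evaluation. For $\underline{A}_{p^r}^a$ with $p$ odd, $\sigma(\underline{A}_{p^r}^a) = \vert p^r\vert^{-1/2}\sum_{x \bmod p^r}\exp(-2\uppi\mathrm{i}\,ax^2/p^r)$, which is a classical quadratic Gauss sum equal to $\legendre{a}{p^r}\varepsilon_{p^r}$ where $\varepsilon_n = 1$ if $n\equiv 1\pmod 4$ and $\varepsilon_n = \mathrm{i}$ if $n \equiv 3 \pmod 4$; one then checks this matches $\exp(-2\uppi\mathrm{i}\,s/8)$ for the signature $s$ of the corresponding Gram matrix in Theorem \ref{thm:LatticeOfLeastRankOfFQMApr}. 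For $\underline{A}_{2^r}^a$ one uses the analogous $2$-adic Gauss sum, and for $\underline{B}_{2^r}$, $\underline{C}_{2^r}$ one evaluates a two-variable exponential sum, getting $\sigma(\underline{B}_{2^r})$ an eighth root of unity and $\sigma(\underline{C}_{2^r}) = 1$, again matching the signatures read off from \eqref{eq:GramMatrixNondegLatticeOfBr} and \eqref{eq:GramMatrixNondegLatticeOfCr}.

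The main obstacle is the reduction from general lattices to the building blocks: proving rigorously that the invariant $\sigma(\underline{L})\exp(2\uppi\mathrm{i}\,\mathop{\mathrm{sign}}(\underline{L})/8)$ is well-defined on discriminant modules — equivalently, that it depends only on $D_{\underline{L}}$ and not on the choice of lattice realizing it — because a priori two non-isometric lattices with isomorphic discriminant modules could have signatures differing by something other than a multiple of $8$. Resolving this cleanly really does require either Nikulin-type results on existence and uniqueness of lattices in a genus, or else a direct argument that adding hyperbolic planes and $E_8$'s connects any two such lattices after stabilization. An alternative route that sidesteps this — and the one I would actually recommend — is to prove the formula first for the explicit small lattices by Gauss sums, then prove multiplicativity, and then invoke the fact that \emph{every} even nondegenerate lattice is, up to adding unimodular summands (which change neither side of the claimed identity modulo the relevant root of unity), isometric to an orthogonal sum of the lattices built in Section \ref{sec:Discriminant modules of even lattices}; this makes the whole argument self-contained within the paper's framework.
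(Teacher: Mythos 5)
Your overall architecture (multiplicativity of $\sigma$, Gauss-sum evaluation on the indecomposable modules, signatures of the explicit Gram matrices) is sound as far as it goes, and your base cases are not circular, since Theorems \ref{thm:LatticeOfLeastRankOfFQMApr}, \ref{thm:LatticeOfLeastRankOfFQMA2r} and \ref{thm:LatticeOfLeastRankOfFQMBrCr} rest on Lemmas \ref{lemm:HowToFindEvenLatticeForApr}, \ref{lemm:HowToFindEvenLatticeForA2r} and \ref{lemm:HowToFindEvenLatticeForD2r}, not on Milgram's formula. But the step you yourself flag as the ``main obstacle'' is a genuine gap, and it is not a technicality: you need that $\mathop{\mathrm{sign}}(\underline{L}) \bmod 8$ depends only on $D_{\underline{L}}$, equivalently that your comparison lattice $\underline{L}'$ (an orthogonal sum of building blocks with $D_{\underline{L}'}\cong D_{\underline{L}}$) satisfies $\mathop{\mathrm{sign}}(\underline{L}')\equiv\mathop{\mathrm{sign}}(\underline{L}) \pmod 8$. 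That statement is precisely the content of Milgram's formula, so invoking it, or ``Nikulin-type results on genera'' (whose existence and uniqueness statements are themselves proved using the signature-mod-$8$ invariant), makes the argument circular. The alternative you recommend---that every even nondegenerate lattice becomes, after adding copies of the hyperbolic plane and $E_8$, isometric to an orthogonal sum of the lattices of Section \ref{sec:Discriminant modules of even lattices}---is established nowhere in the paper: Corollary \ref{coro:FQMisDicsModOfEvenLattice} only asserts the existence of \emph{one} lattice with prescribed discriminant module, not that \emph{all} lattices with that discriminant module are stably isometric to it. The stable-equivalence theorem you would need (Kneser--Wall--Nikulin, together with the classification of indefinite lattices) is a substantially deeper input than the theorem being proved.

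The proof the paper points to (Milnor--Husemoller, Appendix 4) avoids this entirely: one shows that $\sigma(D_{\underline{L}})$ is unchanged when $L$ is replaced by any other lattice in the same rational quadratic space $\mathbb{Q}\otimes L$ (compare $L$ with a finite-index sublattice and track how the Gauss sum transforms), that it is multiplicative over orthogonal sums of rational quadratic spaces, and then one diagonalizes over $\mathbb{Q}$ and evaluates a single one-dimensional Gauss sum by Fourier analysis. If you want to keep your reduction-to-building-blocks strategy, the missing lemma you must supply is exactly this invariance under change of lattice inside a fixed rational quadratic space (or some equivalent statement), since both $\mathop{\mathrm{sign}}$ and, by that lemma, $\sigma(D_{\underline{L}})$ are then invariants of $\mathbb{Q}\otimes L$, and rational quadratic spaces---unlike integral lattices---do decompose into rank-one pieces.
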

This identity, which is essentially on evaluation of Gauss sums, was first discovered and proved by Milgram. An easier proof (comparing to the original proof of Milgram) can be founed in \cite[Appendix 4]{MH1973}, where the authors show that the $\sigma$-invariant actually depends only on the ambient rational inner product space, and is multiplicative with respect to the orthogonal direct sum of rational inner product spaces, and then they prove the identity  by evaluating a special Gauss sum using one-dimensional Fourier analysis.

Thus, we can evaluate the $\sigma$-invariant of  any finite quadratic module $\underline{M}$ by first finding an even nondegenetate lattice whose discriminant module is just $\underline{M}$, and then using Milgram's formula. One can also evaluate $\sigma(\underline{M})$ directly, as Str\"omberg has worked out this in \cite[Section 3]{Stromberg2013}. We list these results for the reader's convenience.
\begin{prop}
\label{prop:SigmaInvariantForFQM}
Let $p$ be a prime, and $a$ an integer with $(p,a)=1$, and let $r$ be a positive integer. Then
\begin{align*}
\sigma(\underline{A}_{p^r}^a) &= \legendre{2a}{p^r}\exp\left(-2\uppi\mathrm{i}\frac{1-p^r}{8}\right) \quad \text{for } p > 2, \\
\sigma(\underline{A}_{2^r}^a) &= \legendre{a}{2^r}\exp\left(-2\uppi\mathrm{i}\frac{a}{8}\right) \quad \text{for } p = 2, \\
\sigma(\underline{B}_{2^r}) &=  (-1)^r, \\
\sigma(\underline{C}_{2^r}) &=  1.
\end{align*}
\end{prop}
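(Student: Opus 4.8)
The plan is to compute $\sigma(\underline{M})$ for each of the four families by invoking Milgram's formula (Theorem \ref{thm:MilgramTheorem}) together with the explicit even lattices already constructed in Section \ref{sec:Discriminant modules of even lattices}. The key observation is that Theorem \ref{thm:MilgramTheorem} reduces the evaluation of $\sigma(\underline{M})$ to computing the signature of a lattice whose discriminant module is $\underline{M}$, and for every indecomposable module we have such a lattice written down explicitly by its Gram matrix in Theorem \ref{thm:LatticeOfLeastRankOfFQMApr}, Theorem \ref{thm:LatticeOfLeastRankOfFQMA2r}, and Theorem \ref{thm:LatticeOfLeastRankOfFQMBrCr}. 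So for each case I would: (i) pick the relevant Gram matrix; (ii) determine its signature, i.e. the number of positive minus the number of negative eigenvalues, which for small matrices can be read off from the signs of the leading principal minors (Sylvester's criterion) or by direct diagonalization over $\mathbb{R}$; (iii) read off $\sigma(\underline{M}) = \exp\!\left(-2\uppi\mathrm{i}\,\mathop{\mathrm{sign}}(\underline{L})/8\right)$; and (iv) check that this matches the claimed closed form. An alternative, fully self-contained route is to evaluate the defining Gauss sum \eqref{deff:SigmaInvariant} directly: for $\underline{A}_{p^r}^a$ with $p$ odd this is a classical quadratic Gauss sum $\sum_{x \bmod p^r} e^{-2\uppi\mathrm{i} a x^2/p^r}$, evaluated by the standard reciprocity/multiplicativity of Gauss sums; for $\underline{A}_{2^r}^a$ it is the analogous $2$-adic Gauss sum; and for $\underline{B}_{2^r}$ and $\underline{C}_{2^r}$ the sums factor over the two coordinates after completing the square (for $\underline{C}_{2^r}$, $\sum_{x,y} e^{-2\uppi\mathrm{i} xy/2^r} = 2^r$ by orthogonality of characters, giving $\sigma = 1$ immediately; for $\underline{B}_{2^r}$ one diagonalizes the form $x^2+xy+y^2$ over $\mathbb{Z}/2^r$).

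For the odd-prime case, I would note that the lattice \eqref{eq:GramMatrixNondegLatticeOfApr1} has determinant $p^r\big(p^r - \legendre{-1}{p}^r\big) - p^{2r} = -p^r\legendre{-1}{p}^r$, which is negative when $\legendre{-1}{p}^r = 1$ and positive otherwise; combined with the sign of the $(1,1)$-entry $2p^r > 0$ this pins down the signature (it is $0$ or $2$), and one checks $\exp(-2\uppi\mathrm{i}\cdot 0/8)$ or $\exp(-2\uppi\mathrm{i}\cdot 2/8)$ against $\legendre{2a}{p^r}\exp(-2\uppi\mathrm{i}(1-p^r)/8)$, using that $\legendre{a}{p}=1$ here. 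The remaining parts of Theorem \ref{thm:LatticeOfLeastRankOfFQMApr} handle $\legendre{a}{p}=-1$, and one repeats the minor computation for \eqref{eq:GramMatrixNondegLatticeOfApr2}, \eqref{eq:GramMatrixNondegLatticeOfApr3}, \eqref{eq:GramMatrixNondegLatticeOfApr4}; the appended blocks $\left(\begin{smallmatrix}2&1\\1&2\end{smallmatrix}\right)$ in \eqref{eq:GramMatrixNondegLatticeOfApr4} are positive definite, contributing $+2$ to the signature. For $\underline{A}_{2^r}^a$ the rank-$1$ cases in Parts 1 and 4 give signature $\pm 1$ directly, and Parts 2--3 involve the $3\times 3$ matrices \eqref{eq:GramMatrixNondegLatticeOfA2r2}, \eqref{eq:GramMatrixNondegLatticeOfA2r3}, whose signatures (an odd number in $\{-3,-1,1,3\}$) are again determined by leading minors. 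For $\underline{B}_{2^r}$ one uses \eqref{eq:GramMatrixNondegLatticeOfBr}, a $4\times 4$ matrix whose signature depends on the parity of $r$ through $\varepsilon = (-1)^{r+1}$, yielding $\sigma = (-1)^r$; for $\underline{C}_{2^r}$ the matrix $\left(\begin{smallmatrix}0&2^r\\2^r&0\end{smallmatrix}\right)$ has signature $0$, so $\sigma = 1$.

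The main obstacle is bookkeeping rather than conceptual: one must be careful with the sign conventions in Milgram's formula (the exponent carries a minus sign, and $\sigma$ is defined with $\exp(-2\uppi\mathrm{i} Q(x))$), and one must correctly evaluate the Kronecker symbols $\legendre{2a}{p^r}$ and $\legendre{a}{2^r}$ appearing in the claimed formulas — in particular tracking how $\legendre{\cdot}{p^r}$ reduces to $\legendre{\cdot}{p}^r$ and how $\legendre{a}{2^r}$ depends on $a \bmod 8$ and the parity of $r$. The case analysis for $\underline{A}_{2^r}^a$ is the most delicate, since it splits according to $a \bmod 8 \in \{\pm 1, \pm 5\}$ and the parity of $r$, and the rank-$3$ lattices \eqref{eq:GramMatrixNondegLatticeOfA2r2}, \eqref{eq:GramMatrixNondegLatticeOfA2r3} have signatures that must be matched against $\legendre{a}{2^r}\exp(-2\uppi\mathrm{i} a/8)$ in each subcase. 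Since Strömberg has already carried out the direct Gauss-sum computation in \cite[Section 3]{Stromberg2013}, I would cite that work for the details and present the lattice-based derivation only in outline, remarking that it provides an independent verification via the explicit lattices of the previous section.
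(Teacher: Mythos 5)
Your proposal is correct and follows essentially the same route as the paper: the paper likewise cites Str\"omberg's direct Gauss-sum evaluation and otherwise deduces the four identities by computing the inertia (via completing squares) of the explicit Gram matrices \eqref{eq:GramMatrixNondegLatticeOfApr1}--\eqref{eq:GramMatrixNondegLatticeOfApr4}, \eqref{eq:GramMatrixNondegLatticeOfA2r2}, \eqref{eq:GramMatrixNondegLatticeOfA2r3}, \eqref{eq:GramMatrixNondegLatticeOfBr}, \eqref{eq:GramMatrixNondegLatticeOfCr} and applying Theorem \ref{thm:MilgramTheorem}. Your sample signature checks (e.g.\ the determinant $-p^r\legendre{-1}{p}^r$ for \eqref{eq:GramMatrixNondegLatticeOfApr1} and signature $0$ for \eqref{eq:GramMatrixNondegLatticeOfCr}) are accurate, so no gap remains beyond the bookkeeping the paper itself omits.
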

As has been pointed out, these identities can be deduced by evaluating the inertia of the matrices \eqref{eq:GramMatrixNondegLatticeOfApr1}, \eqref{eq:GramMatrixNondegLatticeOfApr2}, \eqref{eq:GramMatrixNondegLatticeOfApr3}, \eqref{eq:GramMatrixNondegLatticeOfApr4}, \eqref{eq:GramMatrixNondegLatticeOfA2r2}, \eqref{eq:GramMatrixNondegLatticeOfA2r3}, \eqref{eq:GramMatrixNondegLatticeOfBr}, \eqref{eq:GramMatrixNondegLatticeOfCr}, and using Theorem \ref{thm:MilgramTheorem} (All inertia can be worked out by standard procedure of completing squares over the reals for these matrices, so we omit the tedious details.).

\begin{rema}
\label{rema:CompletionOfThmApr}
In Part 4 of Theorem \ref{thm:LatticeOfLeastRankOfFQMApr}, we have constructed an even nondegenerate lattice of rank $4$ whose discriminant module is $\underline{A}_{p^r}^a$, under certain conditions. Also we have showed that there is no such lattice of rank $1$ and $2$. We can now assert: the lattice constructed there is of the least rank. This can be proved by the formula of $\sigma$-invariants just stated in Proposition \ref{prop:SigmaInvariantForFQM}.
\end{rema}

We now turn to the task of finding positive definite even lattices of the least positive rank with given finite quadratic modules.
\begin{lemm}
\label{lemm:rankMod8OfPositive DefLattice}
Let $\underline{L}=(L,B_L)$ be a positive definite even lattice of rank $n \in \mathbb{Z}_{\geq 1}$, and $D_{\underline{L}}$ be its discriminant module. Let $\underline{M}=(M,Q_M)$ be a finite quadratic module. Let $p$ be a prime and $a$ be an integer with $(a,p)=1$, and let $r$ be a positive integer. Suppose that $D_{\underline{L}}$ and $\underline{M}$ are isometrically isomorphic.
\begin{enumerate}
\item If $\underline{M}$ is trivial, i.e., $\vert M \vert=1$, then $n \equiv 0 \mod 8$.
\item If $\underline{M}=\underline{A}_{p^r}^a$, with $r$ even and $p > 2$, then $n \equiv 0 \mod 8$.
\item If $\underline{M}=\underline{A}_{p^r}^a$, with $r$ odd and $p > 2$, then $n \equiv 3 - \legendre{-1}{p} - 2\legendre{a}{p} \mod 8$.
\item If $\underline{M}=\underline{A}_{2^r}^a$, with $r$ even and $(a,2)=1$, then $n \equiv a \mod 8$.
\item If $\underline{M}=\underline{A}_{2^r}^a$, with $r$ odd and $(a,2)=1$, then $n \equiv a + 2 - 2\legendre{a}{2} \mod 8$.
\item If $\underline{M}=\underline{B}_{2^r}$, then $n \equiv 2 - 2(-1)^r \mod 8$.
\item If $\underline{M}=\underline{C}_{2^r}$, then $n \equiv 0 \mod 8$.
\end{enumerate}
\end{lemm}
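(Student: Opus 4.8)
The plan is to deduce everything from Milgram's formula (Theorem \ref{thm:MilgramTheorem}) together with the explicit $\sigma$-invariants recorded in Proposition \ref{prop:SigmaInvariantForFQM}. The key observation is that for a positive definite lattice $\underline{L}$ of rank $n$ the real orthogonal geometry $(\mathbb{R}\otimes L, B_L)$ has positive index of inertia $n$ and negative index $0$, so $\mathop{\mathrm{sign}}(\underline{L}) = n$. Hence Theorem \ref{thm:MilgramTheorem} gives $\sigma(D_{\underline{L}}) = \exp(-2\uppi\mathrm{i}\, n/8)$. Since the $\sigma$-invariant of a finite quadratic module depends only on its isometric isomorphism class (it is defined purely in terms of $(M,Q)$), the hypothesis $D_{\underline{L}} \cong \underline{M}$ turns this into $\exp(-2\uppi\mathrm{i}\, n/8) = \sigma(\underline{M})$. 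In each of the seven cases the right-hand side is, by Proposition \ref{prop:SigmaInvariantForFQM}, an eighth root of unity, so this equation determines $n$ modulo $8$; the whole lemma thus reduces to rewriting each value $\sigma(\underline{M})$ in the form $\exp(-2\uppi\mathrm{i}\, m/8)$ and reading off $m$.

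Next I would run through the cases. Parts (1) and (7) are immediate because $\sigma(\underline{M}) = 1$. For part (6), $\sigma(\underline{B}_{2^r}) = (-1)^r$, and since any sign $\varepsilon \in \{\pm 1\}$ equals $\exp(-2\uppi\mathrm{i}(2 - 2\varepsilon)/8)$, one gets $n \equiv 2 - 2(-1)^r \pmod 8$. For the $\underline{A}$-modules I would use two elementary number-theoretic facts: multiplicativity of the Kronecker symbol in its lower argument, so that $\legendre{2a}{p^r} = \legendre{2a}{p}^r$ and $\legendre{a}{2^r} = \legendre{a}{2}^r$; and $p^2 \equiv 1 \pmod 8$ for odd $p$, whence $p^r \equiv p^{r\bmod 2} \pmod 8$. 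With these, when $r$ is even the invariants of $\underline{A}_{p^r}^a$ ($p>2$) and of $\underline{A}_{2^r}^a$ collapse to $1$ and $\exp(-2\uppi\mathrm{i}\, a/8)$, yielding parts (2) and (4). When $r$ is odd they become $\legendre{2a}{p}\exp(-2\uppi\mathrm{i}(1-p)/8) = \exp\bigl(-2\uppi\mathrm{i}(3 - p - 2\legendre{2a}{p})/8\bigr)$ and $\legendre{a}{2}\exp(-2\uppi\mathrm{i}\, a/8) = \exp\bigl(-2\uppi\mathrm{i}(a + 2 - 2\legendre{a}{2})/8\bigr)$, the latter being precisely the assertion of part (5). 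For part (3) it then remains to check the congruence $p + 2\legendre{2a}{p} \equiv \legendre{-1}{p} + 2\legendre{a}{p} \pmod 8$, which I would verify by running over the four residue classes $p \equiv 1, 3, 5, 7 \pmod 8$ and invoking the supplementary laws $\legendre{2}{p} = (-1)^{(p^2-1)/8}$ and $\legendre{-1}{p} = (-1)^{(p-1)/2}$.

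I expect no conceptual obstacle in this argument; the only step demanding genuine (if still elementary) care is part (3), where the two quadratic characters $\legendre{-1}{p}$ and $\legendre{2a}{p}$ enter simultaneously and one has to juggle the supplementary laws for the Legendre symbol modulo $8$. The remaining manipulations — collapsing Kronecker symbols when $r$ is even, reducing $p^r$ modulo $8$, and rewriting $\pm 1$ as an eighth root of unity — are routine bookkeeping.
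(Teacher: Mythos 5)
Your proposal is correct and is exactly the paper's argument: the paper proves this lemma by citing Theorem \ref{thm:MilgramTheorem} together with Proposition \ref{prop:SigmaInvariantForFQM}, noting that positive definiteness forces $\mathop{\mathrm{sign}}(\underline{L})=n$. You have simply filled in the routine case-by-case verification (reduction of $p^r$ modulo $8$, multiplicativity of the Kronecker symbol, and the supplementary laws for part (3)), all of which checks out.
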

\begin{proof}
This follows immediately from Theorem \ref{thm:MilgramTheorem} and Proposition \ref{prop:SigmaInvariantForFQM}.
\end{proof}

\begin{rema}
Let $\underline{L}=(L,B)$ be an even nondegenerate lattice. If $D_{\underline{L}}$ is the zero module, i.e., $L=L^{\sharp}$, then $\underline{L}$ is called a unimodular lattice. The name comes from the fact that the volumn of a closed fundamental domain of $(\mathbb{R} \otimes L) / L$ in the real space $\mathbb{R} \otimes L$ is $1$ under a natural selection of Lebesgue measure. The investigation of  this kind of lattices has a long history and constitutes an important area in number theory. In the indefinite case, one has a perfect structure theorem, while in the definite case, one has related the theory on such lattices to theories on many other disciplines, such as packing of equal balls in Euclidean spaces. The answer to the question of finding a positive definite even lattice with the zero discriminant module and least positive rank is well-known: The lattice $\Gamma_8 := \left\{(x_1,\dots, x_8) \in \mathbb{R}^8 \middle\vert x_1, \dots, x_8\text{ are all in } \mathbb{Z}\text{ or }1/2+\mathbb{Z},\,x_1+\dots + x_8 \in 2\mathbb{Z}\right\}$ satisfies the required conditions. See \cite{CS1999} for a complete discussion. In the rest of this section, we focus on finding positive definite even lattices for indecomposable (nonzero) finite quadratic modules.
\end{rema}

Now we proceed to deal with the modules $\underline{A}_{p^r}^a$ with $r$ even.
\begin{thm}
\label{thm:theLeastRankPosDefLatticeForAprEven}
Let $p$ be an odd prime, and $a$ be an integer with $(a,p)=1$, let $r$ be an even positive integer.
\begin{enumerate}
\item If $p \neq 7$, then the discriminant module of the positive definite even lattice given by the following Gram matrix
\begin{equation}
\label{eq:matrixForAprPosDefpneq7rEven}
\begin{pmatrix}
2qp^r & p^rv & 0 & 0 & 0 & 0 & 0 & 0 \\
p^rv & 2x & 1 & 0 & 0 & 0 & 0 & 0 \\
0 & 1 & 2 & 1 & 0 & 0 & 0 & 0 \\
0 & 0 & 1 & 2 & 1 & 0 & 0 & 0 \\
0 & 0 & 0 & 1 & 2 & 1 & 0 & 0 \\
0 & 0 & 0 & 0 & 1 & 2 & 1 & 0 \\
0 & 0 & 0 & 0 & 0 & 1 & 2 & 1 \\
0 & 0 & 0 & 0 & 0 & 0 & 1 & 2
\end{pmatrix}
\end{equation}
is isometrically isomorphic to $\underline{A}_{p^r}^a$, where $q \equiv 4 \mod 7 $ is a prime such that $\legendre{q}{p}=\legendre{a}{p}$, $v$ is a solution of the congruence $7p^rv^2 \equiv -1 \mod 4q$, and $x=\frac{7p^rv^2+12q+1}{28q}$.
\item If $p = 7$, and $\legendre{a}{7}=1$, then the discriminant module of the positive definite even lattice given by the following Gram matrix
\begin{equation}
\label{eq:matrixForAprPosDefpeq7rEvenLegendre1}
\begin{pmatrix}
8\cdot 7^r & 7^rv & 0 & 0 & 0 & 0 & 0 & 0 \\
7^rv & 2x & 1 & 0 & 0 & 0 & 0 & 0 \\
0 & 1 & 2 & 1 & 0 & 0 & 0 & 0 \\
0 & 0 & 1 & 2 & 1 & 0 & 0 & 0 \\
0 & 0 & 0 & 1 & 2 & 1 & 0 & 0 \\
0 & 0 & 0 & 0 & 1 & 2 & 1 & 0 \\
0 & 0 & 0 & 0 & 0 & 1 & 2 & 1 \\
0 & 0 & 0 & 0 & 0 & 0 & 1 & 2
\end{pmatrix}
\end{equation}
is isometrically isomorphic to $\underline{A}_{7^r}^a$, where $v$ is a solution of the congruence $7^{r-1}v^2 \equiv -1 \mod 16$, and $x=\frac{7^rv^2+7}{16}$.
\item If $p = 7$, and $\legendre{a}{7}=-1$, then the discriminant module of the positive definite even lattice given by the following Gram matrix
\begin{equation}
\label{eq:matrixForAprPosDefpeq7rEvenLegendre-1}
\begin{pmatrix}
10\cdot 7^r & 7^rv & 0 & 0 & 0 & 0 & 0 & 0 \\
7^rv & 2x & 5 & 0 & 0 & 0 & 0 & 0 \\
0 & 5 & 4 & 1 & 0 & 0 & 0 & 0 \\
0 & 0 & 1 & 2 & 1 & 0 & 0 & 0 \\
0 & 0 & 0 & 1 & 2 & 1 & 0 & 0 \\
0 & 0 & 0 & 0 & 1 & 2 & 1 & 0 \\
0 & 0 & 0 & 0 & 0 & 1 & 2 & 1 \\
0 & 0 & 0 & 0 & 0 & 0 & 1 & 2
\end{pmatrix}
\end{equation}
is isometrically isomorphic to $\underline{A}_{7^r}^a$, where $v$ is a solution of the congruences $7^rv^2 \equiv 1 \mod 20$, and $x=\frac{7^rv^2+79}{20}$.
\end{enumerate}
Moreover, in each of the above cases, $8$ is the least rank of positive definite even lattices whose discriminant module is isomorphic to $\underline{A}_{p^r}^a$.
\end{thm}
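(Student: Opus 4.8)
The plan is to combine a congruence lower bound on the rank with the explicit rank-$8$ constructions in the statement. The assertion ``$8$ is the least rank'' decomposes into two parts: (a) no positive definite even lattice of rank strictly less than $8$ has discriminant module isometrically isomorphic to $\underline{A}_{p^r}^a$, and (b) the three Gram matrices \eqref{eq:matrixForAprPosDefpneq7rEven}, \eqref{eq:matrixForAprPosDefpeq7rEvenLegendre1} and \eqref{eq:matrixForAprPosDefpeq7rEvenLegendre-1} do define positive definite even lattices of rank $8$ with discriminant module $\underline{A}_{p^r}^a$. For part (a) I would invoke Lemma \ref{lemm:rankMod8OfPositive DefLattice}(2) directly: any positive definite even lattice $\underline{L}$ of rank $n$ with $D_{\underline{L}}\cong\underline{A}_{p^r}^a$ (for $r$ even, $p>2$) has $n\equiv 0\pmod 8$. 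Since $\vert\underline{A}_{p^r}^a\vert=p^r>1$, the lattice $\underline{L}$ is not the zero lattice, so $n\geq 1$; together with $n\equiv 0\pmod 8$ this forces $n\geq 8$. This is the short half of the proof, being an immediate consequence of Milgram's formula via Proposition \ref{prop:SigmaInvariantForFQM}.

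For part (b) I would proceed case by case, in parallel with the rank-$4$ construction in Part 4 of Theorem \ref{thm:LatticeOfLeastRankOfFQMApr}. First I would check consistency of the auxiliary data: the prime $q$ with $q\equiv 4\pmod 7$ and $\legendre{q}{p}=\legendre{a}{p}$ exists by Dirichlet's theorem on primes in arithmetic progressions, after prescribing the residue of $q$ modulo $7$ and modulo $p$ via the Chinese remainder theorem; the stated congruences defining $v$ are solvable, again by the Chinese remainder theorem together with a quadratic-reciprocity computation showing the relevant residues are squares (the same mechanism as in the proof of Theorem \ref{thm:LatticeOfLeastRankOfFQMApr}); and consequently $x$ is an integer. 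Next, for each matrix I would write it as $\mathop{\mathrm{diag}}(p^r,1,\dots,1)\cdot S$, compute $\det S$ by expanding along the ``$A_n$-type tail'' in the lower-right corner to see that $S\in\mathrm{GL}_8(\mathbb{Z})$, and then verify the three conditions of Lemma \ref{lemm:HowToFindEvenLatticeForApr}: the diagonal entries of $S$ are even, $\legendre{s_{11}/2}{p}=\legendre{a}{p}$, and $p^r s_{1j}=s_{j1}$ for $j\geq 2$ with $s_{ij}=s_{ji}$ for $i,j\geq 2$. This identifies $D_{\underline{L}}$ with $\underline{A}_{p^r}^a$. Finally, positive definiteness would be checked by Sylvester's criterion, i.e. by completing the square over $\mathbb{R}$ as the paper does elsewhere; the appended $A_n$-tail contributes positive leading principal minors, so only the first two or three genuinely variable blocks require a hand computation.

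The hard part will be the case $p=7$, $\legendre{a}{7}=-1$, governed by the Gram matrix \eqref{eq:matrixForAprPosDefpeq7rEvenLegendre-1}, whose top-left corner carries the irregular entries $10\cdot 7^r$, $5$, $5$, $4$ rather than the uniform pattern seen in the other two cases. There one must check simultaneously that, after factoring out $\mathop{\mathrm{diag}}(7^r,1,\dots,1)$, the matrix $S$ still lies in $\mathrm{GL}_8(\mathbb{Z})$ and still satisfies the conditions of Lemma \ref{lemm:HowToFindEvenLatticeForApr} (in particular that $\det S=\pm 1$ survives the modified block), and that the ``$4$'' on the diagonal does not destroy positive definiteness. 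Once this and the two routine cases are settled, part (b) is complete, and combined with part (a) it follows that $8$ is precisely the least rank of a positive definite even lattice whose discriminant module is isomorphic to $\underline{A}_{p^r}^a$.
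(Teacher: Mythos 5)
Your proposal is correct and, for the two substantive steps, coincides with the paper's argument: the lower bound comes from Lemma \ref{lemm:rankMod8OfPositive DefLattice}(2) exactly as you say, and the identification of the discriminant module is done by factoring each Gram matrix as $\mathop{\mathrm{diag}}(p^r,1,\dots,1)\cdot S$, checking the three conditions of Lemma \ref{lemm:HowToFindEvenLatticeForApr}, and computing $\det S=1$ by expansion against the tridiagonal tail (the paper isolates this as the fact $\det F_l=l+1$ for the $l\times l$ matrix with $2$'s on the diagonal and $1$'s off it, so that the choice of $x$ makes the determinant collapse to $1$). The one place where you genuinely diverge is positive definiteness: the paper's primary argument is a signature trick, namely that Theorem \ref{thm:MilgramTheorem} together with Proposition \ref{prop:SigmaInvariantForFQM} forces $\mathop{\mathrm{sign}}(\underline{L})\in\{0,\pm 8\}$, and the visible $F_5$ block spans a $5$-dimensional positive definite subspace, so the signature must be $8$; your Sylvester/completing-squares route is the alternative the paper itself mentions, and it buys independence from Milgram's formula at the cost of a genuine computation. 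Be aware, though, that your phrase ``the appended $A_n$-tail contributes positive leading principal minors, so only the first two or three blocks need checking'' is imprecise in the usual top-left ordering, since every leading minor of size $\geq 3$ involves the variable block; either reorder the basis so the tail comes first (then the first six minors are $\det F_1,\dots,\det F_6=2,\dots,7$, and only the $7\times 7$ minor, e.g. $2x\det F_6-\det F_5>0$ since $x\geq 1$, and the full determinant $p^r>0$ remain), or use the recursion $D_k=2D_{k-1}-D_{k-2}$ for the leading minors. With that repair, and the routine existence checks for $q$, $v$, $x$ (Dirichlet plus quadratic reciprocity, as in the proof of Theorem \ref{thm:LatticeOfLeastRankOfFQMApr}), your outline matches the paper, including the delicate bookkeeping in the third matrix with the entries $10\cdot 7^r$, $5$, $5$, $4$.
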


\begin{proof}
Before proving the theorem itself, we state an auxiliary property. We define a sequence of matrices $F_l$ for $l \in \mathbb{Z}_{\geq 1}$ as follows. The matrix $F_l$ is a matrix of size $l \times l$, and the $(i,j)$-entry $f_{ij}$ is equal to $2$, if $i=j$, is equal to $1$, if $i-j=\pm 1$, and is equal to $0$ otherwise. One shows by induction that $\det(F_l)=l+1$. In particular, $\det(F_5)=6$, and $\det(F_6)=7$. Also note that $F_l$'s are all positive definite matrices.

Now proceed to prove the theorem. First of all, we can prove that $v$ exists in three cases by elementary number theory, and hence $x$ is an integer. Secondly, divide the first row of each of the matrices \eqref{eq:matrixForAprPosDefpneq7rEven}, \eqref{eq:matrixForAprPosDefpeq7rEvenLegendre1}, and \eqref{eq:matrixForAprPosDefpeq7rEvenLegendre-1} by $p^r$, and denote the result by $S$. Then it can be verified that $S$ is an integral matrix and satisfies the three conditions posed in Lemma \ref{lemm:HowToFindEvenLatticeForApr}. Thus, to conclude that the discriminant modules of these lattices are isomorphic to $\underline{A}_{p^r}^a$, it remains to prove that $\det(S)=\pm 1$ in three cases. Using the determinant of $F_5$ and $F_6$ just stated, and using the expansion according to the first row, we find the determinant of the matrix $S$ obtaining from \eqref{eq:matrixForAprPosDefpneq7rEven} is $28qx-7p^rv^2-12q$, which equals $1$ since $x=\frac{7p^rv^2+12q+1}{28q}$. Similarly the matrices $S$ obtaining from \eqref{eq:matrixForAprPosDefpeq7rEvenLegendre1} and \eqref{eq:matrixForAprPosDefpeq7rEvenLegendre-1} also have determinant $1$. Thirdly, the lattices given by these Gram matrices are positive definite. To prove this, note that by Theorem \ref{thm:MilgramTheorem} and Proposition \ref{prop:SigmaInvariantForFQM} the signatures are $0$ or $\pm 8$. But in all three cases, each of the ambient real orthogonal geometries has a positive definite subspace of dimension $5$ (with Gram matrix $F_5$), hence $8$ is the only possible signature, i.e., these lattices are positive definite. This assertion can also be proved by completing squares directly. Finally, we see that an even nondegenerate nonzero lattice of rank less than $8$ can not be positive definite by Part 2 of Lemma \ref{lemm:rankMod8OfPositive DefLattice}.
\end{proof}

Next we consider $\underline{A}_{p^r}^a$ with $r$ odd.
\begin{thm}
\label{thm:theLeastRankPosDefLatticeForAprOdd}
Let $p$ be an odd prime, and $a$ be an integer with $(a,p)=1$, let $r$ be an odd positive integer.
\begin{enumerate}
\item If $\legendre{-1}{p}=1$ and $\legendre{a}{p}=1$, then let $q$, $v$, and $x$ be as in Part 1 of Theorem \ref{thm:theLeastRankPosDefLatticeForAprEven}, and we have the discriminant module of the positive definite even lattice given by the Gram matrix \eqref{eq:matrixForAprPosDefpneq7rEven} is isometrically isomorphic to $\underline{A}_{p^r}^a$.
\item If $\legendre{-1}{p}=1$ and $\legendre{a}{p}=-1$, then let $q$, $v$, and $x$ be as in Part 4 of Theorem \ref{thm:LatticeOfLeastRankOfFQMApr}, and we have the discriminant module of the positive definite even lattice given by the Gram matrix \eqref{eq:GramMatrixNondegLatticeOfApr4} is isometrically isomorphic to $\underline{A}_{p^r}^a$.
\item If $\legendre{-1}{p}=-1$ and $\legendre{a}{p}=1$, then the discriminant module of the positive definite even lattice given by the Gram matrix \eqref{eq:GramMatrixNondegLatticeOfApr1} in Part 1 of Theorem \ref{thm:LatticeOfLeastRankOfFQMApr} is isometrically isomorphic to $\underline{A}_{p^r}^a$.
\item If $\legendre{-1}{p}=-1$ and $\legendre{a}{p}=-1$,  then the discriminant module of the positive definite even lattice given by the Gram matrix
\begin{equation}
\label{eq:matrixForAprPosDefrOddLegendre-1-1}
\begin{pmatrix}
2qp^r & p^rv & 0 & 0 & 0 & 0 \\
p^rv & 2x & 1 & 0 & 0 & 0 \\
0 & 1 & 2 & 1 & 0 & 0 \\
0 & 0 & 1 & 2 & 1 & 0 \\
0 & 0 & 0 & 1 & 2 & 1 \\
0 & 0 & 0 & 0 & 1 & 2
\end{pmatrix}
\end{equation}
is isometrically isomorphic to $\underline{A}_{p^r}^a$, where $q \equiv 3 \mod 5$ is a prime such that $\legendre{q}{p}=-1$, and $v$ is a solution of the congruence $5p^rv^2 \equiv -1 \mod 4q$, and $x=\frac{5p^rv^2+8q+1}{20q}$.
\end{enumerate}
Moreover, in each of the above cases,  the rank of the given lattice is the least among all positive definite even lattices whose discriminant module is isomorphic to $\underline{A}_{p^r}^a$.
\end{thm}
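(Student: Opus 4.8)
The proof will follow the four-step pattern of the proof of Theorem~\ref{thm:theLeastRankPosDefLatticeForAprEven}, with the four cases treated in parallel. In each case one is handed an explicit Gram matrix $G$; dividing its first row by $p^r$ produces a candidate matrix $S$, and the aim is to show that $S \in GL_n(\mathbb{Z})$ satisfies the three conditions of Lemma~\ref{lemm:HowToFindEvenLatticeForApr}, that the lattice is positive definite, and that its rank is the least possible. The first step is the arithmetic one: I would check that the auxiliary prime $q$ exists by Dirichlet's theorem on primes in arithmetic progressions (the relevant moduli are coprime; note that $\legendre{-1}{p}=1$ in Cases~1 and~2 forces $p\neq 7$, so the construction imported from Theorem~\ref{thm:theLeastRankPosDefLatticeForAprEven} is legitimate), and that the displayed congruence for $v$ is solvable, so that $x$ comes out an integer. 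Solvability of the congruence for $v$ is a Legendre-symbol computation combining the residue class of $q$ (modulo $7$ in Case~1, modulo $5$ in Case~4), quadratic reciprocity, and the prescribed value of $\legendre{q}{p}$; one checks separately that the conditions modulo $4$ and modulo $5$ are solvable and force $v$ odd, which then makes $x$ integral.

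The second step is linear algebra. From the shape of $G$ it is immediate that $S$ is integral, has even diagonal entries, satisfies $p^r s_{1j}=s_{j1}$ for $j\geq 2$ and $s_{ij}=s_{ji}$ for $i,j\geq 2$, and satisfies $\legendre{s_{11}/2}{p}=\legendre{a}{p}$ --- the last reducing to $\legendre{q}{p}=\legendre{a}{p}$ (respectively $\legendre{1}{p}=\legendre{a}{p}$ in Case~3), which is exactly how $q$, respectively the matrix, was chosen. The only genuine computation is $\det(S)=1$: expanding along the first row and using the tridiagonal determinants $\det(F_l)=l+1$ recorded in the proof of Theorem~\ref{thm:theLeastRankPosDefLatticeForAprEven}, one reduces to an identity that holds by the explicit formula for $x$. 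Then Lemma~\ref{lemm:HowToFindEvenLatticeForApr} yields $D_{\underline{L}}\cong\underline{A}_{p^r}^a$. In Cases~1, 2 and~3 these first two steps coincide verbatim with computations already carried out in Theorem~\ref{thm:theLeastRankPosDefLatticeForAprEven} (Part~1) and in Theorem~\ref{thm:LatticeOfLeastRankOfFQMApr} (Parts~4 and~1), none of which used the parity of $r$; only Case~4 requires a fresh, but entirely analogous, determinant evaluation.

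The third and fourth steps --- positive definiteness and minimality --- come together through Milgram's formula. By Theorem~\ref{thm:MilgramTheorem} and Proposition~\ref{prop:SigmaInvariantForFQM}, the signature of each constructed lattice is congruent modulo $8$ to the residue appearing in Part~3 of Lemma~\ref{lemm:rankMod8OfPositive DefLattice}, namely $0$, $4$, $2$, $6$ in Cases~1--4. Each Gram matrix visibly carries a positive definite principal submatrix of dimension exceeding half of the rank --- a trailing $F_l$-block, possibly with its corner entry replaced by $2x>0$, and in Case~3 the whole matrix, whose determinant is $p^r>0$ --- so the positive index of inertia exceeds half of the rank, which forces the signature to be positive; combined with the congruence modulo $8$ and the bound that the signature has absolute value at most the rank, the signature must equal $n$, i.e.\ the lattice is positive definite. (One may instead complete squares directly.) For minimality, the same congruence read through Part~3 of Lemma~\ref{lemm:rankMod8OfPositive DefLattice} forces any positive definite even lattice with discriminant module $\underline{A}_{p^r}^a$ to have rank congruent to that residue modulo $8$, hence rank at least $8$, $4$, $2$, $6$ respectively --- precisely the ranks attained by the lattices exhibited.

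The main obstacle I anticipate is the bookkeeping in Case~4: verifying that $5p^r v^2\equiv -1\pmod{4q}$ is solvable, and that $x=\frac{5p^r v^2+8q+1}{20q}$ is then an integer, requires correctly combining $q\equiv 3\pmod 5$ and $\legendre{q}{p}=-1$ with quadratic reciprocity and the congruences modulo $4$ and modulo $5$. This is routine rather than deep --- the conceptual content already occurs in the proof of Theorem~\ref{thm:theLeastRankPosDefLatticeForAprEven} --- but it is where a slip is most likely to creep in.
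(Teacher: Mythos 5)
Your proposal is correct and takes essentially the same route as the paper's proof: Parts 1--3 are reduced to the computations already carried out for Theorem \ref{thm:theLeastRankPosDefLatticeForAprEven} and Theorem \ref{thm:LatticeOfLeastRankOfFQMApr} together with a direct positive-definiteness check, Part 4 is handled through Lemma \ref{lemm:HowToFindEvenLatticeForApr} exactly as in the even-$r$ case, and minimality follows from Part 3 of Lemma \ref{lemm:rankMod8OfPositive DefLattice} via Theorem \ref{thm:MilgramTheorem}. The only (harmless) imprecision is the remark that the imported computations ``never use the parity of $r$'': in Case 1 the solvability of $7p^rv^2\equiv-1\pmod{4q}$ does involve $\legendre{p^r}{q}$ and hence the parity of $r$, but the hypotheses $\legendre{-1}{p}=1$ and $\legendre{q}{p}=\legendre{a}{p}=1$ restore it by reciprocity, exactly as your first step anticipates.
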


\begin{proof}
We have proved the assertions of Part 1, Part 2, and Part 3 in Theorem \ref{thm:LatticeOfLeastRankOfFQMApr} and Theorem \ref{thm:theLeastRankPosDefLatticeForAprEven}, except for the fact that the lattices given by the Gram matrices in Part 2 and Part 3 are positive definite, which can be verified directly.

To prove the statement in Part 4, we use Lemma \ref{lemm:HowToFindEvenLatticeForApr}, as in the proof of Theorem \ref{thm:theLeastRankPosDefLatticeForAprEven}. We omit the details.

Finally, from Part 3 of Lemma \ref{lemm:rankMod8OfPositive DefLattice}, we see that the rank of the given lattice is actually the least among all positive definite even lattices whose discriminant module is isomorphic to $\underline{A}_{p^r}^a$ in each of the four cases.
\end{proof}

We now turn to the modules $\underline{A}_{2^r}^a$ with $r$ even.
\begin{thm}
\label{thm:theLeastRankPosDefLatticeForA2rEven}
Let $r$ be an even positive integer, and $a$ an odd integer.
\begin{enumerate}
\item If $a \equiv 1 \mod 8$, then the discriminant module of the positive definite even lattice given by the $1 \times 1$ Gram matrix $(2^r)$ is isometrically isomorphic to $\underline{A}_{2^r}^a$.
\item If $a \equiv 3 \mod 8$, then the discriminant module of the positive definite even lattice given by the Gram matrix \eqref{eq:GramMatrixNondegLatticeOfA2r3} with $\varepsilon = -1$ in Part 3 of Theorem \ref{thm:LatticeOfLeastRankOfFQMA2r} is isometrically isomorphic to $\underline{A}_{2^r}^a$.
\item If $a \equiv 5 \mod 8$, then the discriminant module of the positive definite even lattice given by the Gram matrix
\begin{equation}
\label{eq:matrixForA2rPosDefrEven5Mod8}
\begin{pmatrix}
5\cdot 2^r & 2^rv & 0 & 0 & 0 \\
2^rv & 2x & 1 & 0 & 0 \\
0 & 1 & 2 & 1 & 0 \\
0 & 0 & 1 & 2 & 1 \\
0 & 0 & 0 & 1 & 2
\end{pmatrix}
\end{equation}
is isometrically isomorphic to $\underline{A}_{2^r}^a$, where $v$ is a solution of the congruence $2^{r-1}v^2 \equiv -2 \mod 5$, and $x=\frac{2^{r-1}v^2+2}{5}$.
\item If $a \equiv 7 \mod 8$, then the discriminant module of the positive definite even lattice given by the Gram matrix
\begin{equation}
\label{eq:matrixForA2rPosDefrEven5Mod8}
\begin{pmatrix}
7\cdot 2^r & 2^rv & 0 & 0 & 0 & 0 & 0 \\
2^rv & 2x & 1 & 0 & 0 & 0 & 0 \\
0 & 1 & 2 & 1 & 0 & 0 & 0\\
0 & 0 & 1 & 2 & 1 & 0 & 0 \\
0 & 0 & 0 & 1 & 2 & 1 & 0 \\
0 & 0 & 0 & 0 & 1 & 2 & 1 \\
0 & 0 & 0 & 0 & 0 & 1 & 2
\end{pmatrix}
\end{equation}
is isometrically isomorphic to $\underline{A}_{2^r}^a$, where $v$ is a solution of the congruence $2^{r-1}v^2 \equiv -3 \mod 7$, and $x = \frac{2^{r-1}v^2+3}{7}$.
\end{enumerate}
Moreover, in each of the above cases,  the rank of the given lattice is the least among all positive definite even lattices whose discriminant module is isomorphic to $\underline{A}_{2^r}^a$.
\end{thm}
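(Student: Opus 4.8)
The plan is to mirror the proof of Theorem~\ref{thm:theLeastRankPosDefLatticeForAprEven}, with the odd prime $p$ replaced by $2$ and Lemma~\ref{lemm:HowToFindEvenLatticeForApr} replaced by Lemma~\ref{lemm:HowToFindEvenLatticeForA2r}. For each residue class of $a$ modulo $8$, divide the first row of the displayed Gram matrix by $2^r$ and call the resulting matrix $S$; the assertion about the discriminant module then reduces to checking that $S$ is integral, that $\det S=\pm 1$, and that $S$ satisfies the three conditions of Lemma~\ref{lemm:HowToFindEvenLatticeForA2r}. In Part~1 one simply takes $S=(1)$; in Part~2 the matrix and the discriminant-module identity are already furnished by Part~3 of Theorem~\ref{thm:LatticeOfLeastRankOfFQMA2r} (with $\varepsilon=-1$), so only positive definiteness is new there; the real work is in Parts~3 and~4.

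First I would check solvability of the auxiliary congruences $2^{r-1}v^2\equiv -2\bmod 5$ and $2^{r-1}v^2\equiv -3\bmod 7$; since $r$ is even, $r-1$ is odd, and a short computation in $(\mathbb{Z}/5\mathbb{Z})^\times$ and $(\mathbb{Z}/7\mathbb{Z})^\times$ gives a solution $v$ and makes the stated $x$ an integer. Conditions (1) and (3) of Lemma~\ref{lemm:HowToFindEvenLatticeForA2r} are then immediate from the shape of the matrices (the tridiagonal tail is symmetric, and the entries off the first row and column vanish as required). For condition (2), the entry $s_{11}$ is $1$, $3$, $5$, $7$ in the four parts, hence odd, and $s_{11}a\equiv 1\bmod 8$ in each case --- which is exactly why these multipliers were chosen --- so $s_{11}a$ is a square modulo $2^{r+1}$, the squares in $(\mathbb{Z}/2^{r+1}\mathbb{Z})^\times$ being precisely the residues $\equiv 1\bmod 8$ when $r\ge 2$. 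Finally $\det S=1$ follows by expanding along the first row and using the identity $\det F_l=l+1$ for the tridiagonal matrices $F_l$ introduced in the proof of Theorem~\ref{thm:theLeastRankPosDefLatticeForAprEven}; substituting the defining formula for $x$ collapses the two surviving terms to $1$.

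It then remains to see that each lattice is positive definite and of least rank. By Theorem~\ref{thm:MilgramTheorem} and Proposition~\ref{prop:SigmaInvariantForFQM} the signature is $\equiv a\bmod 8$, while it lies in $\{-n,-n+2,\dots,n\}$ for a lattice of rank $n$. When $n\le 3$ these two facts alone force the signature to equal $n$. When $n\in\{5,7\}$ the bottom-right $(n-2)\times(n-2)$ principal block of the displayed matrix is exactly $F_{n-2}$, which is positive definite, so the ambient real quadratic space has at most two negative directions; hence the signature is $\ge n-4$, and combined with its residue modulo $8$ this again forces it to equal $n$ (one could alternatively complete squares over $\mathbb{R}$). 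For minimality, Part~4 of Lemma~\ref{lemm:rankMod8OfPositive DefLattice} states that any positive definite even lattice with discriminant module $\underline{A}_{2^r}^a$ and $r$ even has rank $\equiv a\bmod 8$; since the rank is positive, the least possible value is the representative of $a$ in $\{1,3,5,7\}$, which is precisely the rank of the lattice exhibited in each part.

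The main obstacle is not a single difficult step but the simultaneous satisfaction, in each residue class, of several constraints: solvability of the $v$-congruence, integrality of $x$, the quadratic-residue condition $s_{11}a\equiv 1\bmod 8$, and positive definiteness at the minimal rank. Finding the right multiplier ($1$, $3$, $5$, $7$) and modulus ($5$ or $7$) to meet all of these at once is the heart of the matter; once the matrices are written down as in the statement, every verification is mechanical. One small point to flag is that the criterion ``square mod $2^{r+1}$ iff $\equiv 1\bmod 8$'' requires $r\ge 2$, which is guaranteed since $r$ is a positive even integer.
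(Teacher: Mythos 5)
Your proposal is correct and follows essentially the same route as the paper: Parts 1 and 2 are inherited from Theorem \ref{thm:LatticeOfLeastRankOfFQMA2r} (with positive definiteness checked separately), Parts 3 and 4 are handled exactly as in the proof of Theorem \ref{thm:theLeastRankPosDefLatticeForAprEven} but with Lemma \ref{lemm:HowToFindEvenLatticeForA2r} in place of Lemma \ref{lemm:HowToFindEvenLatticeForApr}, and minimality comes from Part 4 of Lemma \ref{lemm:rankMod8OfPositive DefLattice}. Your verifications (solvability of the congruences, $\det S=1$ via $\det F_l=l+1$, the residue condition $s_{11}a\equiv 1 \bmod 8$ with $r\ge 2$, and the signature argument for positive definiteness) are accurate fillings-in of the details the paper leaves to the reader.
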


\begin{proof}
The statements in Part 1 and Part 2 have been proved in Theorem \ref{thm:LatticeOfLeastRankOfFQMA2r}, except for the fact that the lattices defined in these two cases are positive definite, which can be verified directly (e.g., by completing squares). To prove statements in Part 3 and Part 4, we argue as in the proof of Theorem \ref{thm:theLeastRankPosDefLatticeForAprEven}, using Lemma \ref{lemm:HowToFindEvenLatticeForA2r} instead of Lemma \ref{lemm:HowToFindEvenLatticeForApr}. Finally, to prove the last assertion in the theorem, use Part 4 of Lemma \ref{lemm:rankMod8OfPositive DefLattice}.
\end{proof}

Now turn to the modules $\underline{A}_{2^r}^a$ with $r$ odd.
\begin{thm}
\label{thm:theLeastRankPosDefLatticeForA2rOdd}
Let $r$ be an odd integer greater than $1$, and $a$ be an odd integer.
\begin{enumerate}
\item If $a \equiv 1 \mod 8$, then the conclusion is the same as in Part 1 of Theorem \ref{thm:theLeastRankPosDefLatticeForA2rEven}.
\item If $a \equiv 3 \mod 8$, then the discriminant module of the positive definite even lattice given by the Gram matrix
\begin{equation}
\label{eq:matrixForA2rPosDefrOdd3Mod8}
\begin{pmatrix}
19\cdot 2^r & 2^rv & 0 & 0 & 0 & 0 & 0 \\
2^rv & 2x & 1 & 0 & 0 & 0 & 0 \\
0 & 1 & 2 & 1 & 0 & 0 & 0\\
0 & 0 & 1 & 2 & 1 & 0 & 0 \\
0 & 0 & 0 & 1 & 2 & 1 & 0 \\
0 & 0 & 0 & 0 & 1 & 2 & 1 \\
0 & 0 & 0 & 0 & 0 & 1 & 2
\end{pmatrix}
\end{equation}
is isometrically isomorphic to $\underline{A}_{2^r}^a$, where $v$ is a solution of the congruence $2^{r-1}v^2 \equiv -8 \mod 19$, and $x = \frac{2^{r-1}v^2+8}{19}$.
\item If $a \equiv 5 \mod 8$, then the discriminant module of the positive definite even lattice given by the Gram matrix
\begin{equation}
\label{eq:matrixForA2rPosDefrOdd5Mod8}
\begin{pmatrix}
5 \cdot 2^r & 2^rv & 0 & 0 & 0 & 0 & 0 & 0& 0 \\
2^rv & 2x & 7 & 0 & 0 & 0 & 0 & 0 & 0 \\
0 & 7 & 4 & 1 & 0 & 0 & 0 & 0 & 0 \\
0 & 0 & 1 & 2 & 1 & 0 & 0 & 0 & 0 \\
0 & 0 & 0 & 1 & 2 & 1 & 0 & 0 & 0 \\
0 & 0 & 0 & 0 & 1 & 2 & 1 & 0 & 0 \\
0 & 0 & 0 & 0 & 0 & 1 & 2 & 1 & 0 \\
0 & 0 & 0 & 0 & 0 & 0 & 1 & 2 & 1 \\
0 & 0 & 0 & 0 & 0 & 0 & 0 & 1 & 2 
\end{pmatrix}
\end{equation}
is isometrically isomorphic to $\underline{A}_{2^r}^a$, where $v$ is a solution of the congruence $2^{r-1}v^2 \equiv 1 \mod 5$, and $x=\frac{2^{r-1}v^2+39}{5}$.
\item If $a \equiv 7 \mod 8$, then the conclusion is the same as in Part 4 of Theorem \ref{thm:theLeastRankPosDefLatticeForA2rEven}.
\end{enumerate}
Moreover, in each of the above cases,  the rank of the given lattice is the least among all positive definite even lattices whose discriminant module is isomorphic to $\underline{A}_{2^r}^a$.
\end{thm}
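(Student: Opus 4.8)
The plan is to follow the template established in the proofs of Theorems \ref{thm:theLeastRankPosDefLatticeForAprEven} and \ref{thm:theLeastRankPosDefLatticeForA2rEven}: the only tools required are the matrix criterion of Lemma \ref{lemm:HowToFindEvenLatticeForA2r}, the tridiagonal matrices $F_l$ introduced in the proof of Theorem \ref{thm:theLeastRankPosDefLatticeForAprEven} (recall $\det(F_l)=l+1$, so $\det(F_5)=6$ and $\det(F_6)=7$, and each $F_l$ is positive definite), Milgram's formula (Theorem \ref{thm:MilgramTheorem}) together with the values of $\sigma(\underline{A}_{2^r}^a)$ listed in Proposition \ref{prop:SigmaInvariantForFQM}, and the rank congruence of Part 5 of Lemma \ref{lemm:rankMod8OfPositive DefLattice}. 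Parts 1 and 4 are inherited with almost no work. For $a\equiv 1\bmod 8$ the $1\times 1$ lattice $(2^r)$ is even and positive definite with discriminant module $\underline{A}_{2^r}^1\cong\underline{A}_{2^r}^a$ (Remark \ref{rema:SimpleModIso}), and $n\equiv 1\bmod 8$ shows rank $1$ is least. For $a\equiv 7\bmod 8$ one first checks that the congruence $2^{r-1}v^2\equiv-3\bmod 7$ from Part 4 of Theorem \ref{thm:theLeastRankPosDefLatticeForA2rEven} remains solvable when $r$ is odd (the values $2^{-(r-1)}\bmod 7$ lie among the residues $\{1,2,4\}$, and $-3\equiv 4$), so that $7\times 7$ Gram matrix is defined; dividing its first row by $2^r$ gives $S\in\mathrm{GL}_n(\mathbb{Z})$ with $s_{11}=7$, whence $s_{11}a\equiv 1\bmod 8$ and Condition 2 of Lemma \ref{lemm:HowToFindEvenLatticeForA2r} holds, and positive definiteness, $\det S=1$, and minimality ($n\equiv 7\bmod 8$) go through exactly as in the even case.

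For Part 2 ($a\equiv 3\bmod 8$) and Part 3 ($a\equiv 5\bmod 8$) I would run the same four-step argument on each. First, show $v$ exists, hence $x\in\mathbb{Z}$: in Part 2 this is solvability of $2^{r-1}v^2\equiv-8\bmod 19$, which holds because $-8\equiv 11$ is a quadratic residue mod $19$ while $2$ is a non-residue, so $2^{1-r}$ is a residue precisely when $r$ is odd; in Part 3, solvability of $2^{r-1}v^2\equiv 1\bmod 5$ holds again precisely when $r$ is odd. Second, divide the first row of the displayed Gram matrix \eqref{eq:matrixForA2rPosDefrOdd3Mod8} (resp.\ \eqref{eq:matrixForA2rPosDefrOdd5Mod8}) by $2^r$ to obtain $S$, observe that the resulting lattice is even (its diagonal entries are even) and nondegenerate, and verify the three conditions of Lemma \ref{lemm:HowToFindEvenLatticeForA2r}: the diagonal entries $s_{22},\dots,s_{nn}$ are visibly even, $s_{11}\in\{19,5\}$ is odd with $s_{11}a\equiv 1\bmod 8$, and the banded shape makes $2^rs_{1j}=s_{j1}$ and $s_{ij}=s_{ji}$ for $i,j\geq 2$ obvious (the off-diagonal $7$'s and the diagonal entry $4$ in \eqref{eq:matrixForA2rPosDefrOdd5Mod8} are irrelevant here, since only the diagonal past the first is required to be even). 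Third, compute $\det S$ by cofactor expansion along the first row, writing every minor in terms of $\det(F_5)$ and $\det(F_6)$; substituting the defining formula for $x$ (which gives $228x=6\cdot2^rv^2+96$ in Part 2 and $220x=22\cdot2^rv^2+1716$ in Part 3) yields $\det S=1$, so $D_{\underline{L}}\cong\underline{A}_{2^r}^a$ by Lemma \ref{lemm:HowToFindEvenLatticeForA2r}. Fourth, establish positive definiteness: by Theorem \ref{thm:MilgramTheorem} and Proposition \ref{prop:SigmaInvariantForFQM} the signature is congruent to $n$ modulo $8$ and modulo $2$, hence lies in $\{7,-1\}$ in Part 2 and in $\{9,1,-7\}$ in Part 3; but \eqref{eq:matrixForA2rPosDefrOdd3Mod8} has a principal submatrix equal to $F_5$ and \eqref{eq:matrixForA2rPosDefrOdd5Mod8} one equal to $F_6$, so the positive index of inertia is at least $5$, resp.\ $6$, forcing the signature to be $7$, resp.\ $9$; thus the lattices are positive definite (this can also be checked by completing the square).

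It remains to treat minimality in Parts 2 and 3. Part 5 of Lemma \ref{lemm:rankMod8OfPositive DefLattice} gives $n\equiv 7\bmod 8$ when $a\equiv 3\bmod 8$, so rank $7$ is the least possible; and $n\equiv 1\bmod 8$ when $a\equiv 5\bmod 8$, so one has only to exclude $n=1$. But a rank-$1$ even positive definite lattice with discriminant module isomorphic to $\underline{A}_{2^r}^a$ would, by Lemma \ref{lemm:HowToFindEvenLatticeForA2r}, have Gram matrix $(2^rs_{11})$ with $s_{11}\in\mathrm{GL}_1(\mathbb{Z})=\{\pm1\}$ and $2^rs_{11}>0$, so $s_{11}=1$ and $a\equiv s_{11}a\equiv 1\bmod 8$, contradicting $a\equiv 5\bmod 8$; hence rank $9$ is the least. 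The one genuinely laborious step is the cofactor expansion for the $9\times 9$ matrix \eqref{eq:matrixForA2rPosDefrOdd5Mod8} of Part 3, whose second and third rows deviate from the tridiagonal pattern (these entries being chosen precisely so that $\det S=\pm1$ is solvable for an integer $x$): one must track signs carefully and keep the intermediate minors expressed through $\det(F_l)$. There is no conceptual difficulty anywhere; every remaining step is either an elementary congruence or a determinant evaluation, and all the structural input — the matrix criterion, the $\sigma$-invariant, the rank congruence — is already supplied by Lemmas \ref{lemm:HowToFindEvenLatticeForA2r} and \ref{lemm:rankMod8OfPositive DefLattice}, Theorem \ref{thm:MilgramTheorem}, and Proposition \ref{prop:SigmaInvariantForFQM}.
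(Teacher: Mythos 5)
Your proposal is correct and follows exactly the route the paper intends: the paper omits this proof as "similar to that of the previous theorem," and your argument is precisely that template (Lemma \ref{lemm:HowToFindEvenLatticeForA2r}, the $F_l$ determinants, Milgram's formula with Proposition \ref{prop:SigmaInvariantForFQM}, and Part 5 of Lemma \ref{lemm:rankMod8OfPositive DefLattice}) with the omitted details filled in. I checked the key computations — solvability of the congruences, $\det S = 228x-95-6\cdot 2^rv^2=1$ and $\det S = 220x-1715-22\cdot 2^rv^2=1$, the signature argument for positive definiteness, and the exclusion of rank $1$ when $a\equiv 5 \bmod 8$ — and they are all sound.
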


\begin{proof}
We omit the proof, since it is similar to that of the previous theorem.
\end{proof}

Finally, we shall deal with the modules $\underline{B}_{2^r}$ and $\underline{C}_{2^r}$.
\begin{thm}
\label{thm:theLeastRankPosDefLatticeForB2rC2r}
Let $r$ be a positive integer.
\begin{enumerate}
\item If $r$ is odd, then the conclusion for $\underline{B}_{2^r}$ is the same as in Part 1 of Theorem \ref{thm:LatticeOfLeastRankOfFQMBrCr} with $\varepsilon=1$. The lattice defined there is actually positive definite.
\item If $r$ is even, then the discriminant module of the positive definite even lattice given by the Gram matrix
\begin{equation}
\label{eq:matrixForB2rPosDefrEven}
\begin{pmatrix}
2 \cdot 2^r & 2^r & 0 & 0 & 0 & 0 & 0 & 0 \\
2^r & 10 \cdot 2^r & 2^rv & 0 & 0 & 0 & 0 & 0 \\
0 & 2^rv & 2x & 1 & 0 & 0 & 0 & 0 \\
0 & 0 & 1 & 2 & 1 & 0 & 0 & 0 \\
0 & 0 & 0 & 1 & 2 & 1 & 0 & 0 \\
0 & 0 & 0 & 0 & 1 & 2 & 1 & 0 \\
0 & 0 & 0 & 0 & 0 & 1 & 2 & 1 \\
0 & 0 & 0 & 0 & 0 & 0 & 1 & 2
\end{pmatrix}
\end{equation}
is isometrically isomorphic to $\underline{B}_{2^r}$, where $v$ is a solution of the congruence $2^rv^2 \equiv -8 \mod 19$, and $x = \frac{2^rv^2+8}{19}$.
\item The discriminant module of the positive definite even lattice given by the Gram matrix
\begin{equation}
\label{eq:matrixForC2rPosDefrEven}
\begin{pmatrix}
2 \cdot 2^r & 2^r & 0 & 0 & 0 & 0 & 0 & 0 \\
2^r & 4 \cdot 2^r & 2^rv & 0 & 0 & 0 & 0 & 0 \\
0 & 2^rv & 2x & 1 & 0 & 0 & 0 & 0 \\
0 & 0 & 1 & 2 & 1 & 0 & 0 & 0 \\
0 & 0 & 0 & 1 & 2 & 1 & 0 & 0 \\
0 & 0 & 0 & 0 & 1 & 2 & 1 & 0 \\
0 & 0 & 0 & 0 & 0 & 1 & 2 & 1 \\
0 & 0 & 0 & 0 & 0 & 0 & 1 & 2
\end{pmatrix}
\end{equation}
is isometrically isomorphic to $\underline{C}_{2^r}$, where $v$ is a solution of the congruence $2^rv^2 \equiv -3 \mod 7$, and $x = \frac{2^rv^2+3}{7}$.
\end{enumerate}
Moreover, in each of the above cases,  the rank of the given lattice is the least among all positive definite even lattices whose discriminant module is isomorphic to $\underline{B}_{2^r}$ (in Case 1 and 2) or $\underline{C}_{2^r}$ (in Case 3).
\end{thm}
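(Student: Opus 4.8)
The plan is to handle all three parts via Lemma \ref{lemm:HowToFindEvenLatticeForD2r}, exactly in the manner of the proofs of Theorem \ref{thm:LatticeOfLeastRankOfFQMBrCr} and Theorem \ref{thm:theLeastRankPosDefLatticeForAprEven}, reusing the tridiagonal matrices $F_l$ (for which $\det F_l = l+1$ and which are all positive definite) introduced in the proof of Theorem \ref{thm:theLeastRankPosDefLatticeForAprEven}. Part 1 requires nothing new: for $r$ odd one has $\varepsilon = (-1)^{r+1} = 1$, and Part 1 of Theorem \ref{thm:LatticeOfLeastRankOfFQMBrCr} already provides a rank-$4$ even nondegenerate lattice with discriminant module $\underline{B}_{2^r}$; so it suffices to check positive definiteness and minimality. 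Its first diagonal entry is $2^{r+1}>0$, so the form is not negative definite, while Theorem \ref{thm:MilgramTheorem} together with $\sigma(\underline{B}_{2^r}) = (-1)^r = -1$ (Proposition \ref{prop:SigmaInvariantForFQM}) forces the signature to be $\equiv 4 \mod 8$; being of rank $4$, the lattice thus has signature $4$ and is positive definite. Minimality is immediate from Part 6 of Lemma \ref{lemm:rankMod8OfPositive DefLattice}, which says that the rank of any positive definite even lattice realizing $\underline{B}_{2^r}$ is $\equiv 2 - 2(-1)^r \mod 8$, i.e.\ $\equiv 4 \mod 8$ since $r$ is odd.

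For Parts 2 and 3 I would first check, by elementary number theory, that the stated congruences $2^r v^2 \equiv -8 \mod 19$ (solvable because $r$ is even there) and $2^r v^2 \equiv -3 \mod 7$ are solvable, and that the corresponding $x = (2^r v^2 + 8)/19$, resp.\ $x = (2^r v^2 + 3)/7$, is an integer. Dividing the first two rows of \eqref{eq:matrixForB2rPosDefrEven}, resp.\ \eqref{eq:matrixForC2rPosDefrEven}, by $2^r$ produces an integral matrix $S$, so that the Gram matrix in question equals $\mathop{\mathrm{diag}}(2^r, 2^r, 1, \dots, 1) \cdot S$. One then verifies the three conditions of Lemma \ref{lemm:HowToFindEvenLatticeForD2r} with $n = 8$: the diagonal entries $s_{33}, \dots, s_{88}$ equal $2x, 2, 2, 2, 2, 2$ and are even; $s_{11} = 2$ and $s_{22} \in \{10, 4\}$ are even while $s_{12} = s_{21} = 1$ is odd; and the remaining entries $s_{1j}, s_{2j}$ for $3 \le j \le 8$ (all zero except $s_{23} = v$ and $s_{32} = 2^r v$) and $s_{ij}$ for $3 \le i, j \le 8$ satisfy the required symmetry relations. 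By the proof of Lemma \ref{lemm:HowToFindEvenLatticeForD2r} this gives $D_{\underline{L}} \cong \underline{D}_{2^r}^{s_{11}/2,\, s_{12},\, s_{22}/2}$, that is, $\underline{D}_{2^r}^{1,1,5}$ in Part 2 and $\underline{D}_{2^r}^{1,1,2}$ in Part 3, and Lemma \ref{lemm:DabcIsoToBorC} identifies these with $\underline{B}_{2^r}$ (both $1$ and $5$ odd) and $\underline{C}_{2^r}$ ($2$ even) respectively. It then remains to check $S \in \mathrm{GL}_8(\mathbb{Z})$: expanding $\det S$ along the first column, then along the first rows and columns of the successive minors, reduces everything to $\det F_5 = 6$, $\det F_4 = 5$ and the determinant $12x - 5$ of the $6 \times 6$ matrix obtained from $F_6$ by replacing its top-left entry $2$ by $2x$; a short computation then yields $\det S = 228x - 95 - 12 \cdot 2^r v^2 = 1$ in Part 2 (using $19x = 2^r v^2 + 8$) and $\det S = 84x - 35 - 12 \cdot 2^r v^2 = 1$ in Part 3 (using $7x = 2^r v^2 + 3$).

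Finally, for positive definiteness in Parts 2 and 3: by Theorem \ref{thm:MilgramTheorem} and Proposition \ref{prop:SigmaInvariantForFQM} we have $\sigma(\underline{B}_{2^r}) = (-1)^r = 1$ for $r$ even and $\sigma(\underline{C}_{2^r}) = 1$, so the signature of each of these rank-$8$ lattices is $\equiv 0 \mod 8$, hence equal to $-8$, $0$ or $8$. But the principal submatrix on the last five coordinates of each of \eqref{eq:matrixForB2rPosDefrEven} and \eqref{eq:matrixForC2rPosDefrEven} is exactly $F_5$, which is positive definite, so the positive index of inertia is at least $5$ and the signature is at least $2$; therefore the signature is $8$ and the lattice is positive definite. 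The asserted minimality then follows from Parts 6 and 7 of Lemma \ref{lemm:rankMod8OfPositive DefLattice}: since $\underline{B}_{2^r}$ and $\underline{C}_{2^r}$ are nonzero, any even nondegenerate lattice realizing either of them has positive rank, and the requirement that this rank be $\equiv 0 \mod 8$ (for $\underline{B}_{2^r}$ with $r$ even, and for $\underline{C}_{2^r}$) forces it to be at least $8$. The only real difficulty is bookkeeping---carrying out the $8 \times 8$ cofactor expansion down to the $F_l$ determinants and matching the arithmetic of $v$ and $x$---and there is no conceptual obstacle.
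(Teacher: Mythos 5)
Your proposal is correct and follows essentially the same route as the paper: apply Lemma \ref{lemm:HowToFindEvenLatticeForD2r} after factoring the Gram matrices as $\mathop{\mathrm{diag}}(2^r,2^r,1,\dots,1)\cdot S$, verify $\det S=1$ via the $F_l$ determinants, and settle positive definiteness and minimality through Theorem \ref{thm:MilgramTheorem}, Proposition \ref{prop:SigmaInvariantForFQM}, and Lemma \ref{lemm:rankMod8OfPositive DefLattice} (the paper's own proof is terser, e.g.\ invoking completing squares for Part 1, but the content is the same). Your determinant values $228x-95-12\cdot 2^r v^2$ and $84x-35-12\cdot 2^r v^2$ and the signature/minimality arguments all check out.
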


\begin{proof}
To prove the statement in Part 1, we need only to vefiry that the lattice mentioned is positive definite, which is obtained by completing squares. To prove statements in another two parts, first note that the congruences in both parts are solvable, so the quantity $x$ is integer in both cases. Then use Lemma \ref{lemm:HowToFindEvenLatticeForD2r} to conclude that matrices \eqref{eq:matrixForB2rPosDefrEven} and \eqref{eq:matrixForC2rPosDefrEven} give lattices whose discriminant modules are $\underline{B}_{2^r}$ and $\underline{C}_{2^r}$ respectively.
\end{proof}

As in Corollary \ref{coro:FQMisDicsModOfEvenLattice}, we can combine the constructions in this section to obtain a theorem about arbitrary finite quadratic modules.
\begin{coro}
\label{coro:FQMisDicsModOfPosDefEvenLattice}
Any finite quadratic module is isometrically isomorphic to the discriminant module of some positive definite even lattice.
\end{coro}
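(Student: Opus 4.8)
The plan is to reduce Corollary \ref{coro:FQMisDicsModOfPosDefEvenLattice} to the indecomposable case, which has already been fully treated in this section, by combining the Jordan decomposition (Theorem \ref{thm:JordanDecompositionOfFQM}) with the good behaviour of discriminant modules and positive definiteness under orthogonal direct sums. First I would record the elementary fact that, for even nondegenerate lattices $\underline{L}_1$ and $\underline{L}_2$, the orthogonal direct sum $\underline{L}_1 \oplus \underline{L}_2$ is again even and nondegenerate, and $D_{\underline{L}_1 \oplus \underline{L}_2} \cong D_{\underline{L}_1} \oplus D_{\underline{L}_2}$. This follows by choosing $\mathbb{Z}$-bases, observing that the Gram matrix of the sum is block diagonal, invoking Proposition \ref{propBasisPropOnDualLattice} to see that $(L_1 \oplus L_2)^\sharp = L_1^\sharp \oplus L_2^\sharp$ inside $\mathbb{R} \otimes (L_1 \oplus L_2)$, and then checking that the induced quadratic map on $(L_1 \oplus L_2)^\sharp/(L_1 \oplus L_2) \cong L_1^\sharp/L_1 \oplus L_2^\sharp/L_2$ is literally the orthogonal sum of the two discriminant quadratic maps (using \eqref{eq:RelationBetQandBQ}). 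I would also note that $\underline{L}_1 \oplus \underline{L}_2$ is positive definite whenever both summands are, since $Q((x_1,x_2)) = Q_1(x_1) + Q_2(x_2)$ is a sum of nonnegative terms vanishing only at $(0,0)$. An easy induction extends both statements to arbitrary finite orthogonal direct sums.

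Next, given an arbitrary finite quadratic module $\underline{M}$, I would dispose of the trivial case $\vert M \vert = 1$ by taking the zero lattice (or $\Gamma_8$), and otherwise apply Theorem \ref{thm:JordanDecompositionOfFQM} to write $\underline{M} \cong \bigoplus_{i=1}^{k} \underline{M}_i$ with each $\underline{M}_i$ one of $\underline{A}_{p^r}^a$, $\underline{B}_{2^r}$, $\underline{C}_{2^r}$. For each $i$, depending on the type of $\underline{M}_i$ and the parities involved, one of Theorems \ref{thm:theLeastRankPosDefLatticeForAprEven}, \ref{thm:theLeastRankPosDefLatticeForAprOdd}, \ref{thm:theLeastRankPosDefLatticeForA2rEven}, \ref{thm:theLeastRankPosDefLatticeForA2rOdd}, \ref{thm:theLeastRankPosDefLatticeForB2rC2r} furnishes a positive definite even nondegenerate lattice $\underline{L}_i$ with $D_{\underline{L}_i} \cong \underline{M}_i$. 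Then $\underline{L} := \bigoplus_{i=1}^k \underline{L}_i$ is positive definite, even, and nondegenerate, and $D_{\underline{L}} \cong \bigoplus_i D_{\underline{L}_i} \cong \bigoplus_i \underline{M}_i \cong \underline{M}$ by the first step, which is what we want.

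I do not expect a serious obstacle here: the heavy lifting --- producing explicit Gram matrices for the indecomposable pieces and verifying that they are positive definite --- is exactly what Theorems \ref{thm:theLeastRankPosDefLatticeForAprEven}--\ref{thm:theLeastRankPosDefLatticeForB2rC2r} already accomplish, and the only thing needing genuine (if routine) verification is the compatibility $D_{\underline{L}_1 \oplus \underline{L}_2} \cong D_{\underline{L}_1} \oplus D_{\underline{L}_2}$ of the first paragraph. The one caveat worth flagging is that this construction does \emph{not} in general yield a lattice of least rank: summing optimal pieces need not be optimal, just as was already observed for $\underline{A}_m$ at the end of Section \ref{sec:Discriminant modules of even lattices}; obtaining the least rank for a general (decomposable) module would be a separate and more delicate question, not addressed by this argument.
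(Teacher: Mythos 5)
Your proposal is correct and follows essentially the same route as the paper: combine the Jordan decomposition (Theorem \ref{thm:JordanDecompositionOfFQM}) with the explicit positive definite even lattices of Theorems \ref{thm:theLeastRankPosDefLatticeForAprEven}--\ref{thm:theLeastRankPosDefLatticeForB2rC2r} and take an orthogonal direct sum. The only difference is that you spell out the compatibility $D_{\underline{L}_1 \oplus \underline{L}_2} \cong D_{\underline{L}_1} \oplus D_{\underline{L}_2}$ and the preservation of positive definiteness, which the paper leaves implicit.
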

\begin{proof}
We proceed similarly as in the proof of Corollary \ref{coro:FQMisDicsModOfEvenLattice}, using Theorem \ref{thm:JordanDecompositionOfFQM}, Theorem \ref{thm:theLeastRankPosDefLatticeForAprEven}, Theorem \ref{thm:theLeastRankPosDefLatticeForAprOdd}, Theorem \ref{thm:theLeastRankPosDefLatticeForA2rEven}, Theorem \ref{thm:theLeastRankPosDefLatticeForA2rOdd}, and Theorem \ref{thm:theLeastRankPosDefLatticeForB2rC2r} instead.
\end{proof}

\section{Some Remarks}
\label{sec:Some Remarks}
\paragraph{Finite symmetric bilinear modules.} There is a parallel theory of finite symmetric bilinear modules, besides that of finite quadratic modules. Here a finite symmetric bilinear module means a finite $\mathbb{Z}$-module equipped with a nondegenerate symmetric bilinear form taking values in $\mathbb{Q}/\mathbb{Z}$. All of the methods described in Section \ref{sec:Billinear map modules and quadratic map modules} also apply to such structures. Analogous versions of Corollary \ref{coro:FQMStroNondeg} and Proposition \ref{prop:HomIsoQuoAndDoubleOrtho} remain true for finite symmetric bilinear modules. The indecomposable ones still have the form like those in Definition \ref{deff:FQMABC}, that is (by abuse of language),
\begin{align*}
\underline{A}_{p^r}^a &:= \left(\mathbb{Z}/p^r\mathbb{Z},\,(\overline{x}, \overline{y}) \mapsto \frac{a}{p^r}xy+\mathbb{Z}\right) \, \text{for }p>2, \\
\underline{A}_{2^r}^a &:= \left(\mathbb{Z}/2^r\mathbb{Z},\,(\overline{x}, \overline{y}) \mapsto \frac{a}{2^{r}}xy+\mathbb{Z}\right) \, \text{for } p=2, \\
\underline{B}_{2^r}   &:= \left((\mathbb{Z}/2^r\mathbb{Z})^2,\,\left((\overline{x_1}, \overline{y_1}),(\overline{x_2}, \overline{y_2})\right) \mapsto \frac{2x_1x_2+x_1y_2+x_2y_1+2y_1y_2}{2^r}+\mathbb{Z} \right), \\
\underline{C}_{2^r}   &:= \left((\mathbb{Z}/2^r\mathbb{Z})^2,\,\left((\overline{x_1}, \overline{y_1}),(\overline{x_2}, \overline{y_2})\right) \mapsto \frac{x_1y_2+x_2y_1}{2^r}+\mathbb{Z}\right),
\end{align*}
where $p$ is a prime, $a$ is an integer not divisible by $p$, and $\overline{x}$ denotes $x+p^r\mathbb{Z}$. One also has a structure theorem, which asserts that any finite symmetric bilinear module can be decomposed as an orthogonal direct sum of the modules just listed. See \cite[\S 5]{Wall1963} for a proof and further discussion. We claim that the methods using in our proof of Theorem \ref{thm:JordanDecompositionOfFQM} also apply to the finite symmetric bilinear module case, with some minor adaptation.

We would like to explain why we have focused on finite quadratic modules instead of bilinear ones. Let $\underline{L}=(L,B)$ be an integral lattice, and $L^\sharp$ be its dual (c.f. Definition \ref{deff:DefOfDualLattice}). Suppose $L$ is odd, i.e., there is a vector $v \in L$ such that $Q(v) = 2^{-1}B(v,v) \in 2^{-1}+\mathbb{Z}$. Then the construction of discriminant modules in Example \ref{examp:DiscModule} is problematic, since the quantity $\underline{Q}(v + L)$, where $\underline{Q}$ denotes the quadratic map on $L^\sharp/L$, depends on the choice of a representative of the coset $v + L$. Nevertheless, the bilinear map $\underline{B}(x+L,y+L) = B(x,y)+\mathbb{Z}$ is well-defined on $L^\sharp/L$, no matter whether $L$ is even or odd. Futhermore, the structure $(L^\sharp/L, \underline{B})$ is a finite symmetric bilinear module. Thus, it seems more nature to study integral lattices using ``discriminant bilinear module''. However, only to finite quadratic modules, one can associate Weil representations which are important objects as explained in Introduction. This is the reason we focus on finite quadratic modules.

\paragraph{The question of uniqueness.} We have not mentioned whether the decompositon in Theorem \ref{thm:JordanDecompositionOfFQM} is unique in some sense. Of course, the primary decompositon given in Lemma \ref{lema:PriDecomOfFQM} is unique. So the real question is: given some finite quadratic module of cardinality $p^r$, where $p$ is a prime, and $r$ is a positive integer, does the decomposition of this module into indecomposable modules unique up to isometric isomorphism? Unless the given module is itself indecomposable, the answer is always no. Wall succeeded in getting all relations among different decompositions when $p>2$, and he also get some but not all relations when $p=2$. See \cite[\S 5]{Wall1963}. Here we only give a simple example to illustrate this. Consider the $\mathbb{Z}$-module $(\mathbb{Z}/3\mathbb{Z})^2$ equipped with a quadratic map $Q(x+3\mathbb{Z},y+3\mathbb{Z})=\frac{x^2+y^2}{3}+\mathbb{Z}$. It is easy to see this can be decomposed as an orthogonal direct sum of two modules each of which is isometrically isomorphic to $\underline{A}_{3}^1$. Now let $G$ be the submodule generated by $(1+3\mathbb{Z}, 1+3\mathbb{Z})$, then G equipped with the quadratic map inherited from $(\mathbb{Z}/3\mathbb{Z})^2$ is isometrically isomorphic to $\underline{A}_{3}^2$. A direct calculation shows that $G^\perp$ is also isometrically isomorphic to $\underline{A}_{3}^2$. Hence $\mathbb{Z}$-module $(\mathbb{Z}/3\mathbb{Z})^2$ has another essentially different decomposition as an orthogonal direct sum of two copies of $\underline{A}_{3}^2$.

\section*{Acknowledgment}
\label{sec:Acknowledgment}
Thanks are due to Prof. Hai-Gang Zhou, my Ph. D. supervisor, who gave me useful suggestion, and led me into the beautiful area of modular forms and Jacobi forms. Thanks are also due to Prof. Nils-Peter Skoruppa, a top expert in the field of Jacobi forms, who taught me knowledge and skill revelant to the content of this paper.

\end{document}